\newtheorem{theorem}{Theorem}[section]
\newtheorem{lemma}[theorem]{Lemma}
\newtheorem{proposition}[theorem]{Proposition}
\newtheorem{corollary}[theorem]{Corollary}
\newtheorem{remar}[theorem]{Remark}
\theoremstyle{definition}
\newtheorem{prob}[theorem]{Open Problem}
\newenvironment{remark}{\begin{remar}\rm}{\gzaun\end{remar}}
\newcommand{\gzaun}{\unskip\nobreak\hfil\penalty50%
\hskip1em\hbox{}\nobreak\hfil%
$\#$\parfillskip=0pt\finalhyphendemerits=0}   
\newcommand{\bfind}[1]{\index{#1}{\bf #1}}
\newcommand{\n}{\par\noindent}
\newcommand{\sn}{\par\smallskip\noindent}
\newcommand{\mn}{\par\medskip\noindent}
\newcommand{\bn}{\par\bigskip\noindent}
\newcommand{\pars}{\par\smallskip}
\newcommand{\parm}{\par\medskip}
\newcommand{\cal}{\mathcal}
\newcommand{\isom}{\simeq}
\newcommand{\ovl}[1]{\overline{#1}}
\newcommand{\sep}{^{\rm sep}}
\newcommand{\chara}{\mbox{\rm char}\,}
\newcommand{\Gal}{\mbox{\rm Gal}\,}
\newcommand{\erf}{\mbox{\rm crf}\,}
\newcommand{\dist}{\mbox{\rm dist}\,}
\newcommand{\cO}{\mathcal{O}}
\newcommand{\cM}{\mathcal{M}}
\newcommand{\cE}{\mathcal{E}}
\newcommand{\R}{\mathbb R}
\newcommand{\Q}{\mathbb Q}
\newcommand{\N}{\mathbb N}
\newcommand{\Z}{\mathbb Z}
\newcommand{\F}{\mathbb F}
\newcommand{\Qp}{{\mathbb Q}_p}
\begin{document}
\title[Deeply ramified fields and defect extensions]{The valuation
theory of deeply ramified fields and its connection with defect extensions}
%
\author{Franz-Viktor Kuhlmann}
\address{Institute of Mathematics, University of Szczecin,	
ul. Wielkopolska 15, 	  	  	
70-451 Szcze\-cin, Poland}
\email{fvk@usz.edu.pl}

\author{Anna Rzepka}
\address{Institute of Mathematics, University of Silesia in Katowice, Bankowa 14,
40-007 Katowice, Poland}
\email{anna.rzepka@us.edu.pl}
\date{11.\ 11.\ 2022}
\thanks{The authors would like to thank the referees for many very useful corrections and 
suggestions that helped to improve the paper significantly. We thank Michael Temkin for
drawing our attention to the notion of deeply ramified extensions, which were introduced 
by John Coates and Ralph Greenberg in \cite{CG}.\\
The first author was partially supported by Opus grant 2017/25/B/ST1/01815 
from the National Science Centre of Poland.}

\begin{abstract}\noindent
We study in detail the valuation theory of deeply ramified fields and introduce and
investigate several other related classes of valued fields. Further, a classification 
of defect extensions of prime degree of valued fields that was earlier given only for 
the equicharacteristic case is generalized to the case of mixed characteristic by a
unified definition that works simultaneously for both cases. It
is shown that deeply ramified fields and the other valued fields we introduce only 
admit one of the two types of defect extensions, namely the ones that appear to be 
more harmless in open problems such as local uniformization and the model theory of
valued fields in positive characteristic. We use our knowledge about such defect 
extensions to give a new, valuation theoretic proof of the fact that algebraic 
extensions of deeply ramified fields are again deeply ramified. We also prove finite 
descent, and under certain conditions even infinite descent, for deeply ramified fields. 
These results are also proved for two other related classes of valued fields.
The classes of valued
fields under consideration can be seen as generalizations of the class of tame valued
fields. Our paper supports the hope that it will be possible to generalize to deeply
ramified fields several important results that have been proven for tame fields and 
were at the core of partial solutions of the two open problems mentioned above.
\end{abstract}

\subjclass[2020]{12J10, 12J25}
\keywords{deeply ramified fields, semitame fields, tame fields, defect, higher 
ramification groups}

\maketitle
%
%
\section{Introduction}
The main topics of this paper are the defect of valued field extensions, which lies 
at the heart of longstanding open problems in algebraic geometry and model theoretic
algebra, and the valuation theory of deeply ramified fields. By studying the latter 
in depth, we will exhibit the connection with the former. On the one hand, this enables 
us to better understand deeply ramified fields, and on the other hand, it shows us 
a possible direction in our attempt to tame the defect.

Our interest in the defect owes its existence to the following well known deep open
problems in positive characteristic:
\sn
1) resolution of singularities in arbitrary dimension,
\sn
2) decidability of the field $\F_q((t))$ of Laurent series over a finite field 
$\F_q$, and of its perfect hull.
\sn
Both problems are connected with the structure theory of valued function fields of
positive characteristic $p$. The main obstruction here is the phenomenon of the
\bfind{defect}, which we will define now.

By $(L|K,v)$ we denote a field extension $L|K$ where $v$ is a valuation
on $L$ and $K$ is endowed with the restriction of $v$. The valuation
ring of $v$ on $L$ will be denoted by $\cO_L\,$, and that on $K$ by
$\cO_K\,$. Similarly, $\cM_L$ and $\cM_K$ denote the valuation ideals of $L$ and $K$.
The value group of the valued field $(L,v)$ will be denoted by $vL$, and
its residue field by $Lv$. The value of an element $a$ will be denoted by $va$, and 
its residue by $av$.

We will say that a valued field extension $(L|K,v)$ is \bfind{unibranched} if the
extension of $v$ from $K$ to $L$ is unique. Note that a unibranched extension is
automatically algebraic, since every transcendental extension always admits several
extensions of the valuation.

If $(L|K,v)$ is a finite unibranched extension, then by the Lemma of Ostrowski,
\begin{equation}                    \label{feuniq}
[L:K]\>=\> \tilde{p}^{\nu }\cdot(vL:vK)[Lv:Kv]\>,
\end{equation}
where $\nu$ is a non-negative integer and $\tilde{p}$ the
\bfind{characteristic exponent} of $Kv$, that is, $\tilde{p}=\chara Kv$ if it is 
positive and $\tilde{p}=1$ otherwise. The factor $d(L|K,v):=\tilde{p}^{\nu }$ is 
the \bfind{defect} of the extension $(L|K,v)$. We call $(L|K,v)$ a \bfind{defect
extension} if $d(L|K,v) >1$, and a \bfind{defectless extension} if $d(L|K,v)=1$.
Nontrivial defect only appears when $\chara Kv=p>0$, in which case $\tilde{p}=p$.
A henselian field $(K,v)$ is called a \bfind{defectless field} if all of its finite
extensions are defectless.

Throughout this paper, when we talk of a \bfind{defect extension $(L|K,v)$ of prime
degree}, we will always tacitly assume that it is a unibranched extension. Then it
follows from (\ref{feuniq}) that $[L:K]=p=\chara Kv$ and that $(vL:vK)=1=[Lv:Kv]$; 
the latter means that $(L|K,v)$ is an \bfind{immediate extension}, i.e.,
the canonical embeddings $vK\hookrightarrow vL$ and $Kv\hookrightarrow Lv$ are onto.

Via ramification theory, the study of defect extensions can be reduced to the study 
of purely inseparable extensions and of Galois extensions of degree $p=\chara Kv$. 
To this end, we fix an extension of $v$ from $K$ to its algebraic closure
$\tilde{K}$. We denote the separable-algbraic closure of $K$ by $K\sep$. The
\bfind{absolute ramification field of $(K,v)$} (with respect to the chosen extension 
of $v$), denoted by $(K^r,v)$, is the ramification field of the normal extension 
$(K\sep|K,v)$. If $a\in\tilde{K}$ such that $(K(a)|K,v)$ is a defect extension, then 
$(K^r(a)|K^r,v)$ is a defect
extension with the same defect (see Proposition~\ref{K(a)K^r(a)}). On the other hand,
$K\sep|K^r$ is a $p$-extension, so $K^r(a)|K^r$ is a tower of
purely inseparable extensions and Galois extensions of degree $p$.

Galois defect extensions of degree $p$ of valued fields of characteristic $p>0$ 
(valued fields of \bfind{equal characteristic}) have been classified by the first 
author in \cite{[Ku6]}. There the extension is said to have
\bfind{dependent} defect if it is related to a purely inseparable defect extension of 
degree $p$ in a way that we will explain in Section~\ref{sectASde}, and to have
\bfind{independent} defect otherwise. Note that the condition
for the defect to be dependent implies that the purely inseparable defect extension 
does not lie in the completion of $(K,v)$, hence if $(K,v)$ lies dense in its perfect
hull (with respect to the topology induced by the valuation), then it cannot have Galois 
defect extensions of prime degree with dependent defect.

The classification of defect extensions is important because work by M.~Temkin (see 
e.g.\ \cite{[Te]}) and by the first author indicates that dependent defect appears 
to be more harmful to the above cited problems than independent defect. Also results 
in the present paper point in this direction; see the discussion in Remark~\ref{roughly}.

An analogous classification of Galois defect extensions of degree $p$ of valued fields 
of characteristic $0$ with residue fields of characteristic $p>0$ (valued fields of
\bfind{mixed characteristic}) has so far not been given. But such a classification is
important for instance for the study of infinite algebraic extensions of
the field $\Qp$ of $p$-adic numbers, which in contrast to $\Qp$ itself may well admit
defect extensions. Indeed, $\Q_p^{ab}$, the maximal abelian extension of $\Qp$, is 
such a field. Other examples will be given in a subsequent paper \cite{KR}. Moreover, 
we wish to study the valuation theory of deeply ramified fields (such as
$\Q_p^{ab}$), which will be introduced below, in full generality without restriction 
to the equal characteristic case. For these fields in particular it is important to
work out the similarities between the equal and the mixed characteristic cases.

\pars
The obvious problem for the definition of ``dependent defect'' in the mixed
characteristic case is that a field of
characteristic $0$ has no nontrivial inseparable extensions. However,
there is a characterization of independent defect equivalent to the one given in
\cite{[Ku6]} that readily works also in the mixed characteristic case, and we use 
it to give a unified definition, as follows. Take a Galois
defect extension $\cE=(L|K,v)$ of prime degree $p$. For every $\sigma$ in its Galois 
group $\Gal (L|K)$, with $\sigma\ne\,$id, we set
\begin{equation}                        \label{Sigsig}
\Sigma_\sigma\>:=\> \left\{ v\left( \left.\frac{\sigma f-f}{f}\right) \right| \, 
f\in L^{\times} \right\} \>.
\end{equation}
This set is a final segment of $vK$ and independent of the choice of $\sigma$ 
(see Theorems \ref{dist_ext_p} and \ref{dist_galois_p}); we denote it by 
$\Sigma_\cE\,$. We will show that it is the unique ramification jump of $\cE$ 
and that $I_{\cE}:= \{a\in L\mid va\in\Sigma_\cE\}$ is the unique ramification ideal 
of $\cE$ (for definitions, see Section~\ref{secthrg}). We will explicitly compute 
$\Sigma_\cE$ and $I_{\cE}$ in Section~\ref{sectdepd}.

We say that $\cE$ has \bfind{independent defect} if
\begin{equation}                                     \label{indepdef}
\left\{\begin{array}{lcr}
\Sigma_{\cE}\!\!&=&\!\!\! \{\alpha\in vK\mid \alpha >H_\cE\}\>\mbox{ for some proper 
convex subgroup $H_\cE$}\\
&&\!\!\! \mbox{ of $vK$ such that $vK/H_\cE$ has no smallest positive element;}
\end{array}\right.
\end{equation}
otherwise we will say that $\cE$ has \bfind{dependent defect}. If $(K,v)$ has 
rank 1 (i.e., its value group is order isomorphic to a subgroup of $\R$), then condition
(\ref{indepdef}) just means that $\Sigma_{\cE}$ consists of all positive elements in $vK$.

\pars
That our definition of ``independent defect'' in mixed characteristic is the right 
one is supported by the following observation. Take a valued field of positive
characteristic. If it lies dense in its perfect hull, then by what we have said before, 
all Galois defect extensions must have independent defect. If in addition the field is
complete and of rank 1, then it is a
perfectoid field. What about perfectoid fields of mixed characteristic? They share with 
their tilts, which are perfectoid fields of positive characteristic, isomorphic absolute 
Galois groups. Hence we expect that also perfectoid fields in mixed characteristic admit 
only independent defects. This indeed holds with our definition. Similarly, the 
Fontaine-Wintenberger Theorem states that the fields $\Q_p(p^{1/p^n}\mid n\in\N)$ and 
$\F_p((t))(t^{1/p^n}\mid n\in\N)$ have isomorphic absolute Galois groups. Both are deeply
ramified (and even semitame) fields (definitions are given below), and as such admit 
only independent defects, as we will show in Theorem~\ref{KEindep}.

\pars
For our purposes, the properties of completeness and rank 1 are irrelevant, and we 
prefer to work with a more flexible (and first order axiomatizable) notion. In fact, 
all perfectoid fields are deeply ramified, in the sense of \cite{GR}. Take a valued 
field $(K,v)$ with valuation ring $\cO_K\,$. Choose any extension of $v$ to $K\sep$ 
and denote the valuation ring of $K\sep$ with respect to this extension by 
$\cO_{K\sep}\,$. Then $(K,v)$ is a \bfind{deeply ramified field} if
\begin{equation}                         \label{GRdefdr}
\Omega_{\cO_{K\sep}|\cO_K} \>=\> 0\>,
\end{equation}
where $\Omega_{B|A}$ denotes the module of relative differentials when $A$ is a ring 
and $B$ is an $A$-algebra. This definition does not depend on the chosen extension of 
the valuation from $K$ to $K\sep$.

According to \cite[Theorem~6.6.12 (vi)]{GR}, a nontrivially valued field $(K,v)$ is
deeply ramified if and only if the following conditions hold:
\sn
{\bf (DRvg)} whenever $\Gamma_1\subsetneq\Gamma_2$ are convex subgroups of the value
group $vK$, then $\Gamma_2/\Gamma_1$ is not isomorphic to $\Z$ (that is, no
archimedean component of $vK$ is discrete);
\sn
{\bf (DRvr)} if $\chara Kv=p>0$, then the homomorphism
\begin{equation}                          \label{homOpO}
\cO_{\hat K}/p\cO_{\hat K} \ni x\mapsto x^p\in \cO_{\hat K}/p\cO_{\hat K}
\end{equation}
is surjective, where $\cO_{\hat K}$ denotes the valuation ring of the completion 
$\hat K$ of $(K,v)$.
\sn
Axiom (DRvr) means that modulo $p\cO_{\hat K}$ every element in $\cO_{\hat K}$ is 
a $p$-th power.

\pars
By altering axiom (DRvg) we will now introduce new classes of valued fields, one of 
them containing the class of deeply ramified fields, and one contained in it in the 
case of positive residue characteristic. We will call $(K,v)$ a
\bfind{roughly deeply ramified field}, or in short an \bfind{rdr field}, if it
satisfies axiom (DRvr) together with:
\sn
{\bf (DRvp)} if $\chara Kv=p>0$, then $vp$ is not the smallest positive element 
in the value group $vK$.
\sn
The reason for the choice of this notion will become visible in Proposition~\ref{rdrrk1}
and will be further discussed in Remark~\ref{roughly}.
Note that (DRvg) implies (DRvp).

\pars
If $\chara Kv=p>0$, then (DRvg) certainly holds whenever
$vK$ is divisible by $p$. We will call $(K,v)$ a \bfind{semitame field} if it 
satisfies axiom (DRvr) together with:
\sn
{\bf (DRst)} if $\chara Kv=p>0$, then the value group $vK$ is $p$-divisible.

\pars
We note:
\begin{proposition}                                \label{ax}
The properties (DRvg), (DRvp) and (DRst) are first order axiomatizable in the 
language of valued fields, and so are the classes of semitame, deeply ramified and 
rdr fields of fixed characteristic.
\end{proposition}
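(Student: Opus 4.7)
The plan is to exhibit, in the first-order language of valued fields, a single sentence (or finite conjunction) equivalent to each of (DRvg), (DRvp), (DRst) and to (DRvr); the three classes then appear as finite conjunctions of these together with the sentences fixing the pair $(\chara K,\chara Kv)$. Under that fix, the conditional clauses ``if $\chara Kv=p>0$'' either collapse to a single sentence or become vacuous.

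Axioms (DRvp) and (DRst) are immediate: (DRvp) is the sentence $\exists x\in K^\times\,(0<vx<vp)$, and (DRst) is the $p$-divisibility sentence $\forall x\in K^\times\,\exists y\in K^\times\,\exists u\in\cO_K^\times\,(y^p=ux)$. For (DRvr), density of $K$ in $\hat K$ together with the estimate $v(x^p-x'^p)\ge v(x-x')$ on $\cO_{\hat K}$ let one transfer the surjectivity of $x\mapsto x^p$ modulo $p\cO_{\hat K}$ to the first-order sentence $\forall y\,(vy\ge 0\to\exists x\,(vx\ge 0\wedge v(x^p-y)\ge vp))$, by approximating both the target class and its $p$-th root within $vp$.

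The substantive step is (DRvg), for which I propose the single sentence
\[
A_3\colon\quad \forall\alpha\,\bigl(\alpha>0\to\exists\beta\,(3\beta\ge\alpha\wedge 3\beta\le 2\alpha)\bigr)
\]
and argue $A_3\Leftrightarrow$ (DRvg). For $A_3\Rightarrow$ (DRvg) I argue contrapositively: if some convex subgroups $\Gamma_1\subsetneq\Gamma_2$ of $vK$ satisfy $\Gamma_2/\Gamma_1\cong\Z$, pick $\alpha\in\Gamma_2$ mapping to the generator; the inequalities in $A_3$ force $0<\beta<\alpha$ and hence $\beta\in\Gamma_2$ by convexity, while $3\beta\ge\alpha$ prevents $\beta\in\Gamma_1$ (else $3\beta\in\Gamma_1<\alpha$); symmetrically $\alpha-\beta\in\Gamma_2\setminus\Gamma_1$, so the images of $\beta$ and of $\alpha-\beta$ in $\Z$ would be strictly positive integers summing to $1$, which is impossible. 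Conversely, under (DRvg) every nontrivial archimedean component of $vK$ is a dense subgroup of $\R$; for $\alpha>0$ the open interval $(\ovl\alpha/3,2\ovl\alpha/3)\subset\R$ therefore meets the archimedean component containing $\alpha$, and any lift to $vK$ of such an element witnesses $A_3$, since strict inequalities in the archimedean quotient survive in any lift of representatives.

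The principal obstacle is the equivalence just described, in particular the lifting step in the converse direction, where one must verify that strict inequalities between cosets lift to strict inequalities between representatives in $vK$. Once the four sentences are in place, the classes of semitame, deeply ramified and gdr fields of fixed characteristic are the conjunctions $(\mathrm{DRvr})\wedge(\mathrm{DRst})$, $(\mathrm{DRvr})\wedge(\mathrm{DRvg})$ and $(\mathrm{DRvr})\wedge(\mathrm{DRvp})$, respectively, together with the characteristic-fixing sentences, and are therefore first-order axiomatizable.
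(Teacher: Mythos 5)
Your treatment of (DRvp), (DRst) and (DRvg) is correct and is essentially the paper's own argument; in particular your sentence $A_3$ with the interval $[\alpha/3,\,2\alpha/3]$ is the same ``proportional middle interval'' trick the paper uses with $2\beta\leq\alpha\leq 3\beta$, and both directions of your equivalence (the contrapositive via the quotient $\Gamma_2/\Gamma_1\cong\Z$, and the lifting of strict inequalities from a dense archimedean component) go through. Likewise, in mixed characteristic your reduction of (DRvr) to $\forall y\,(vy\ge 0\to\exists x\,(vx\ge 0\wedge v(x^p-y)\ge vp))$ via density of $K$ in $\hat K$ and the estimate $v(x^p-x'^p)\ge v(x-x')$ on $\cO_{\hat K}$ is exactly the content of the paper's Lemma on $\cO_K/p\cO_K$.

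There is, however, a genuine gap in the equal positive characteristic case, which your single (DRvr)-sentence does not cover. If $\chara K=p$, then $p=0$ in $K$, so $vp=\infty$ and your sentence degenerates to ``every $y\in\cO_K$ is a $p$-th power in $\cO_K$'', i.e.\ to $K$ being perfect. But (DRvr) in equal characteristic says that the \emph{completion} $\hat K$ is perfect, equivalently that $K$ is dense in its perfect hull; this is strictly weaker (e.g.\ $\F_p((t))(t^{1/p^i}\mid i\in\N)$ is not perfect but is dense in its perfect hull). Your ``approximation within $vp$'' argument collapses precisely because there is no nonzero element $vp$ of the value group to approximate within. Consequently your proposed conjunctions would axiomatize the wrong class for fixed characteristic $(p,p)$. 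The repair is the one the paper makes: in equal characteristic $p$ the three classes coincide, and the correct first-order axiomatization is the density of $(K^p,v)$ in $(K,v)$, i.e.\ $\forall\alpha\,\forall a\,\exists b\;v(a-b^p)\ge\alpha$, with the quantifier over the value group replacing the fixed bound $vp$.
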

We will give the proof of this proposition and of almost all results that we will
describe now in Section~\ref{sectdr}.

Let us mention at this point that it has been conjectured that the elementary theory of
the perfect hull of $\F_p((t))$ is decidable, but no proof has been given so far. As 
a perfect valued field of positive characteristic, it is semitame, and understanding 
its valuation theory and in
particular its defects may lay the basis for a future proof. Mastering the defect has 
already shown to be an efficient tool to prove results on local uniformization and the
model theory of valued fields, as demonstrated in \cite{[KK1], [KK2], [K3], [K7]}.

\parm
The notion of ``semitame field'' is reminiscent of that of ``tame field''. Let us 
recall the definition of ``tame''. For the purpose of this paper we will slightly
generalize the notion of ``tame extension'' as defined in \cite{[K7]} (there, tame
extensions were only defined over henselian fields).
A unibranched extension $(L|K,v)$ will be called \bfind{tame}
if every finite subextension $E|K$ of $L|K$ satisfies the following conditions:
\sn
$\begin{array}{ll}
{\bf (TE1)} & \mbox{The ramification index $(vE:vK)$ is not divisible by $\chara Kv$.}\\
{\bf (TE2)} & \mbox{The residue field extension $Ev|Kv$ is separable.}\\
{\bf (TE3)} & \mbox{The extension $(E|K,v)$ is defectless.}
\end{array}$
\sn
A valued field $(K,v)$ is called a \bfind{tame field} if it is henselian and its 
algebraic closure with the unique extension of the valuation is a tame extension, and a
\bfind{separably tame field} if it is henselian and its separable-algebraic closure is a 
tame extension. The absolute 
ramification field $(K^r,v)$ is the unique maximal tame extension of the henselian field
$(K,v)$ by \cite[Theorem (22.7)]{[En]} (see also \cite[Proposition 4.1]{[K--P--R]}).
Hence a henselian field is tame if and only if its absolute ramification field is 
already algebraically closed; in particular, every tame field is perfect.

In contrast to tame and separably tame fields, we do not require semitame fields to 
be henselian; in this way they become closer to deeply ramified fields. The other
fundamental difference to tame fields is that semitame fields
may admit defect extensions, but as we will see in Theorem~\ref{KEindep} below, only
those with independent defect. This justifies the hope that many of the results that 
have been proved for tame fields and applied to the problems
we have cited in the beginning (see \cite{[K7],Khr}) can be
generalized (at least) to the case of henselian semitame fields.

\pars
All valued fields of residue characteristic 0 are semitame and rdr fields, and they 
are deeply ramified fields if and only if (DRvg) holds. Likewise, all henselian 
valued fields of residue characteristic 0 are tame fields. In the present paper, we are
not interested in the case of residue characteristic 0, so we will always assume that
$\chara Kv=p>0$. We will now summarize the basic facts about the connections between 
the properties we have introduced. The proofs will be provided in Section~\ref{sectdr}.
\begin{theorem}                            \label{connprop}
1) If $(K,v)$ is a nontrivially valued field with $\chara Kv=p>0$, then the following
logical relations between its properties hold:
\begin{center}
tame field $\Rightarrow$ separably tame field $\Rightarrow$ semitame field 
$\Rightarrow$\\
deeply ramified field $\Rightarrow$ rdr field.
\end{center}
\sn
2) For a valued field $(K,v)$ of rank 1 with $\chara Kv=p>0$, the three properties
``semitame field'', ``deeply ramified field'' and ``rdr field'' are equivalent.
\mn
3) For a nontrivially valued field $(K,v)$ of characteristic $p>0$, the following
properties are equivalent:
\n
a) $(K,v)$ is a semitame field,
\n
b) $(K,v)$ is a deeply ramified field,
\n
c) $(K,v)$ is an rdr field,
\n
d) $(K,v)$ satisfies axiom (DRvr),
\n
e) the completion of $(K,v)$ is perfect,
\n
f) $(K,v)$ is dense in its perfect hull,
\n
g) $(K^p,v)$ is dense in $(K,v)$.
\mn
4) Every perfect valued field of positive characteristic is a semitame field.
\end{theorem}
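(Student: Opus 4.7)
My plan is to treat part 3 as the technical core, since it supplies the characteristic $p$ content from which parts 2 and 4 largely follow, and to prove part 1 by a sequence of short structural arguments.

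For part 1, ``tame $\Rightarrow$ separably tame'' is immediate from the definitions. For ``separably tame $\Rightarrow$ semitame'' I would derive (DRst) from $K\sep = K^r$: by classical ramification theory $vK^r/vK$ has no $p$-torsion, while $vK\sep$ contains the $p$-divisible hull of $vK$ (visible through separable polynomials such as $X^p - X - a$ in equal characteristic and $X^p - a - pX$ in mixed characteristic, for suitably chosen $a$), forcing $vK$ to be $p$-divisible. For the (DRvr) component I would reduce the equal characteristic case to part 3 and in mixed characteristic apply a Hensel argument inside the henselian field $K$. ``Semitame $\Rightarrow$ deeply ramified'' uses that $p$-divisibility of $vK$ passes to every convex subgroup $\Gamma \subseteq vK$ (since $|\alpha/p| \leq |\alpha|$ puts $\alpha/p$ in $\Gamma$), hence to every quotient $\Gamma_2/\Gamma_1$, which therefore cannot be $\Z$. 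Finally ``deeply ramified $\Rightarrow$ gdr'' uses the contrapositive: if $vp$ were the smallest positive element of $vK$, the smallest convex subgroup containing $vp$ would be archimedean with smallest positive element $vp$, hence isomorphic to $\Z$, violating (DRvg).

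For part 2, the nontrivial direction is ``gdr $\Rightarrow$ semitame'' in rank 1 and mixed characteristic. I would exploit (DRvp) to choose $\beta \in vK$ with $0 < \beta < vp$, pick $a \in \cO_K$ with $va = \beta$, and apply (DRvr) to write $a = b^p + pc$; since $v(pc) \geq vp > \beta = va$, this forces $\beta = p\cdot vb$, so $\beta/p \in v\hat K = vK$. Writing an arbitrary positive $\alpha \in vK$ as $n\beta + r$ with $0 \leq r < \beta$ then extends $p$-divisibility to all of $vK$, yielding (DRst). The equivalence with (DRvg) in this setting is immediate once (DRst) holds, and in equal characteristic part 3 delivers the whole statement without any rank hypothesis.

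For part 3, the key observation is that in equal characteristic $p$ axiom (DRvr) literally says $\cO_{\hat K}$ is perfect (because $p\cO_{\hat K} = 0$), hence $\hat K$ is perfect. The equivalences $(d)\Leftrightarrow(e)\Leftrightarrow(f)\Leftrightarrow(g)$ then follow by standard completion arguments, using that in characteristic $p$ the $p$-th root map scales valuations by $1/p$, so $p$-th roots of Cauchy sequences remain Cauchy. To close the loop with (a), (b), (c), I would note that perfectness of $\hat K$ yields $a^{1/p} \in \hat K$ for every $a \in K^{\times}$ and hence $va/p \in v\hat K = vK$, forcing $vK$ to be $p$-divisible; this supplies (DRst), (DRvg) (by the convexity argument from part 1), and makes (DRvp) vacuous. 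The reverse implications are trivial since (DRvr) is part of the definition of each of semitame, deeply ramified, and gdr. Part 4 is then a two-line consequence: if $K$ is perfect of positive characteristic, the Cauchy argument above makes $\hat K$ perfect, which is condition (e), so part 3 gives semitameness. The main obstacle I anticipate is the mixed characteristic side of ``separably tame $\Rightarrow$ semitame'': while (DRst) comes out cleanly from ramification theory, extracting (DRvr) from tameness requires exhibiting, for each $a \in \cO_K$, an auxiliary separable polynomial whose root provides a $p$-th root of $a$ modulo $p$, and verifying Hensel-solvability inside a suitable tame subextension.
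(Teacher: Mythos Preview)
Your treatment of parts 2, 3, and 4 is correct and follows essentially the same line as the paper (the paper packages the rank-1 $p$-divisibility step into Lemma~\ref{basprop}, but the content is the same). Your ramification-theoretic derivation of (DRst) from $K\sep=K^r$ in part~1 is in fact tidier than the paper's route, which simply imports the conclusion from an external reference.

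The real gap is the (DRvr) half of ``separably tame $\Rightarrow$ semitame.'' In equal characteristic, ``reduce to part~3'' is not an argument: part~3 only tells you that (DRvr) is \emph{equivalent} to density in the perfect hull, so you still have to establish one of those from separable tameness. The paper invokes \cite[Corollary~3.12]{[K7]}, which says exactly that a separably tame field of positive characteristic is dense in its perfect hull; this does not drop out of the ramification picture you set up for (DRst). In mixed characteristic, a bare ``Hensel argument'' will not manufacture approximate $p$-th roots modulo $p$: the polynomial $X^p-a$ has all its root differences of value $\frac{vp}{p-1}$, so Krasner/Hensel only kicks in once you already have an approximation at that level. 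What actually works (and what the paper does, via Proposition~\ref{chardr2}) is to observe that in characteristic~0 separably tame coincides with tame, hence $(K,v)$ is henselian \emph{and defectless}; then for $a\in\cO_K$ with $\eta^p=a$ and $\eta\notin K$, defectlessness forces $v(\eta-K)$ to attain a maximum (Lemma~\ref{dist_defectless}), and the iterative approximation of Lemma~\ref{maxnots} (using the (DRst) and perfect-residue-field facts you already have) pushes that maximum to at least $vp/p$, giving $a\equiv c^p\bmod p\cO_K$. Your sketch never invokes defectlessness, and without it the iteration need not reach level~$vp$.
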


We note that for valued fields of mixed characteristic, axiom (DRvr) can be 
substituted by a version where $\hat K$ is replaced by $K$ (see Lemma~\ref{OK/pOK}), 
and even $p$ can be replaced by  elements of certain lower or higher values (see
Propositions~\ref{dpd'1} and~\ref{dpd'2}).

In \cite{K9} the equivalence of assertions a) and f) of part 3) of this theorem is 
used to show that every valued field of positive characteristic that has only finitely
many Artin-Schreier extensions is a semitame field. This proves that a nontrivially
valued field of positive characteristic that is definable in an NTP$_2$ theory
is a semitame field, as it is shown in \cite{CKS} that such a field has only finitely
many Artin-Schreier extensions.

\parm
Take a valued field $(K,v)$ of characteristic 0 with residue characteristic $p>0$.
Decompose $v=v_0\circ v_p\circ \ovl{v}$, where $v_0$ is the finest coarsening of $v$ 
that has residue characteristic 0, $v_p$ is a rank 1 valuation on $Kv_0\,$, and 
$\ovl{v}$ is the valuation induced by $v$ on the residue field of $v_p$ (which is of
characteristic $p>0$). The valuations $v_0$ and $\ovl{v}$ may be trivial. Further, 
following \cite{J}, we call the value group $vK$ \bfind{roughly $p$-divisible} if 
$v_p\circ \ovl{v}\,(Kv_0)$ (the value group of $v_p\circ \ovl{v}$ on $Kv_0$) is 
$p$-divisible. Then we have:

\begin{proposition}                   \label{rdrrk1}
Under the above assumptions, the following assertions are equivalent:
\sn
a) $(K,v)$ is a roughly deeply ramified field,
\sn
b) $(Kv_0,v_p\circ \ovl{v})$ is a deeply ramified field,
\sn
c) $(Kv_0,v_p)$ is a deeply ramified field,
\sn
d) $(K,v)$ satisfies (DRvr) and $vK$ is roughly $p$-divisible.
\end{proposition}
\n
Note that by part 2) of Theorem~\ref{connprop}, the properties ``semitame field'', 
``deeply ramified field'' and ``rdr field'' are equivalent for $(Kv_0,v_p)$.

\pars
From Theorem~\ref{connprop} and Proposition~\ref{rdrrk1} it can be deduced that the 
three properties ``semitame'', ``deeply ramified'' and ``rdr'' behave well for 
composite valuations.
\begin{proposition}                                 \label{wcow}
Take an arbitrary valued field $(K,v)$ and assume that $v=w\circ\ovl{w}$ with $w$ and 
$\ovl{w}$ nontrivial. Then $(K,v)$ is an rdr field if and only if $(K,w)$ and 
$(Kw,\ovl{w})$ are. If $\chara Kw>0$, then for $(K,v)$ to be an rdr field it suffices 
that $(K,w)$ is an rdr field. The same holds for ``semitame'' and ``deeply ramified'' in
place of ``rdr''.

If $\chara Kw=0$, then for $(K,v)$ to be an rdr field it suffices that $(Kw,\ovl{w})$ 
is an rdr field.
\end{proposition}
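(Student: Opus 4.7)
The plan is to reduce the three properties (gdr, semitame, deeply ramified) to tractable statements on value groups and rank 1 quotients, using Proposition~\ref{gdrrk1} in mixed characteristic and Theorem~\ref{connprop}(3) in equal characteristic. If $\chara Kv=0$ then $\chara Kw=0=\chara(Kw)\ovl{w}$ and all three fields are automatically gdr, semitame, and deeply ramified, so I henceforth assume $\chara Kv=p>0$.

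For the gdr equivalence I split on $\chara Kw$. If $\chara Kw=0$, then $\chara K=0$ and the finest coarsening $v_0$ of $v$ with residue characteristic 0 factors as $v_0=w\circ w''$ for some $w''$ on $Kw$; a direct inspection shows that in the decomposition of $\ovl{w}$ on $Kw$ provided by Proposition~\ref{gdrrk1}, $w''$ plays the role of the ``$v_0$-piece'' and $v_p$ plays the role of the ``$v_p$-piece'', so both $(K,v)$ and $(Kw,\ovl{w})$ are gdr iff the common rank 1 field $(Kv_0,v_p)$ is. Since $(K,w)$ has residue characteristic 0 and is therefore automatically gdr, this simultaneously settles the iff statement and the ``$(Kw,\ovl{w})$ suffices'' statement in this case. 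If $\chara Kw=p$, then $w=v_0\circ w'$ with $w'$ on $Kv_0$, and the unique rank 1 coarsening of $w'$ with residue characteristic $p$ must equal $v_p$. When $\chara K=0$, Proposition~\ref{gdrrk1} then reduces both $(K,v)$ gdr and $(K,w)$ gdr to $(Kv_0,v_p)$ gdr. When $\chara K=p$, I rewrite gdr as density in $K^{1/p^\infty}$ using Theorem~\ref{connprop}(3) and verify the equivalence by splitting a desired $v$-approximation target $\alpha\in vK$ with $\alpha>0$ into the case $\alpha\notin\ovl{w}(Kw)$, handled by $w$-approximation since then $w(\alpha)>0$, and the case $\alpha\in\ovl{w}(Kw)$, handled by approximating the $w$-residue in $(Kw)^{1/p^\infty}$ via $(Kw,\ovl{w})$ gdr and lifting back to the valuation ring of $w$ inside $K$.

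The crux, which also supplies the ``$(K,w)$ suffices'' statement and the $(Kw,\ovl{w})$ gdr hypothesis used above, is that $(K,w)$ gdr together with $\chara Kw>0$ forces $Kw$ to be perfect: given $\xi\in Kw$, I lift to $\tilde\xi$ in the valuation ring of $(K,w)$, then apply (DRvr) for $(K,w)$ to produce $\tilde\eta$ in the valuation ring of the completion with $w(\tilde\xi-\tilde\eta^p)\geq wp>0$; since completion preserves the residue field, the $w$-residue of $\tilde\eta$ lies in $Kw$, and taking $w$-residues on both sides of the congruence gives $\xi=(\tilde\eta w)^p$. Theorem~\ref{connprop}(4) then makes $(Kw,\ovl{w})$ semitame, hence a fortiori gdr and deeply ramified.

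Finally, for the assertions with semitame and deeply ramified in place of gdr, I would note that the extra axioms ((DRst) and (DRvg), respectively) are properties of the value group that are additive across the short exact sequence $0\to\ovl{w}(Kw)\to vK\to wK\to 0$, in which $\ovl{w}(Kw)$ is realised as a convex subgroup of $vK$. Concretely, $p$-divisibility of $vK$ is inherited by $wK$ (obvious) and by $\ovl{w}(Kw)$ (since $|a|\leq|pa|$ together with convexity forces $a\in\ovl{w}(Kw)$ whenever $pa\in\ovl{w}(Kw)$), with both implications reversible; and the archimedean components of $vK$ are those of $\ovl{w}(Kw)$ followed by those of $wK$, so none is discrete iff none of the components of either constituent is. Combining these observations with the gdr equivalence yields the iff statements for semitame and deeply ramified, while combining them with the semitameness of $(Kw,\ovl{w})$ from the previous paragraph yields the corresponding ``$(K,w)$ suffices'' statements. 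The main obstacle throughout is the perfection extraction in the previous paragraph; the rest is bookkeeping via Proposition~\ref{gdrrk1} and the additivity observations.
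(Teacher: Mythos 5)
Your proposal is correct and follows essentially the same route as the paper's proof: Proposition~\ref{gdrrk1} reduces the mixed characteristic cases to the common rank~1 piece $(Kv_0,v_p)$, the perfection of $Kw$ extracted from (DRvr) (this is part 1 of Lemma~\ref{basprop}) combined with Theorem~\ref{connprop} supplies the ``$(K,w)$ suffices'' assertions, and the additivity of (DRst) and (DRvg) across the convex subgroup $\ovl{w}(Kw)$ of $vK$ handles the semitame and deeply ramified variants. The only local difference is in equal characteristic $p$, where the paper simply observes that $v$ and its nontrivial coarsening $w$ generate the same topology, so density of $K^p$ in $K$ transfers at once, whereas your two-case approximation argument (splitting on whether the target value lies in $\ovl{w}(Kw)$) is a more laborious but valid way to reach the same conclusion.
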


\pars
For deeply ramified fields, the first assertion of the next theorem has been proved before
(see \cite[Corollary 6.6.16 (i)]{GR}), based on their definition given in (\ref{GRdefdr}).
\begin{theorem}                               \label{algext}
Every algebraic extension of a deeply ramified field is again deeply ramified. The 
same holds for semitame fields and for rdr fields.
\end{theorem}
\n
We will give the easy proof for the equal characteristic case in
Proposition~\ref{algextpos}.
The proof for the mixed characteristic case can be reduced to the study of Galois 
defect extensions of prime degree via the following theorem:
\begin{theorem}                                     \label{rdrram}
Take a valued field $(K,v)$, fix any extension of $v$ to $\tilde{K}$, and let
$(K^r,v)$ be the corresponding absolute ramification field of $(K,v)$. Then $(K^r,v)$ is 
an rdr field if and only if $(K,v)$ is, and $(K^r,v)$ is a semitame field if and only if 
$(K,v)$ is. If $(K,v)$ is an rdr field, then $(K^r,v)$ is a deeply ramified field.
\end{theorem}
\n
Note that the last assertion holds since if $(K^r,v)$ is an rdr field, then it is 
already a deeply ramified field because $vK^r$ is divisible by every prime distinct 
from the residue characteristic. However, it is not true in general that this implies
that $(K,v)$ is deeply ramified, since (DRvg) always holds in $(K^r,v)$ (as long as 
$v$ is nontrivial), while it may not hold in $(K,v)$.
\begin{corollary}                                    \label{rdrramcor}
1) Take an algebraic (not necessarily finite) extension $(L|K,v)$ of valued fields. 
If $K^r=L^r$ with respect to some extension of $v$ from $L$ to $\tilde{L}$, then  
$(L,v)$ is an rdr field if and only if $(K,v)$ is, and the same holds for 
``semitame'' in place of ``rdr''.
\sn
2) Take a valued field $(K,v)$, fix any extension of $v$ to $\tilde{K}$, and let
$(K^h,v)$ be the henselization of $(K,v)$ in $(\tilde{K},v)$. Then $(K^h,v)$ is 
a deeply ramified field if and only if $(K,v)$ is, and the same holds for ``rdr'' 
and ``semitame'' in place of ``deeply ramified''.
\end{corollary}
\n
Note that the assumption of part 1) holds in particular if $(L|K,v)$ is a tame 
extension. We see that we have infinite descent of the properties ``rdr'' and
``semitame'' through extensions in the absolute ramification field and in particular
through tame extensions. If the lower field already satisfies (DRvg), then the 
descent also works for ``deeply ramified''. For all of the properties, we have
finite descent in general:
\begin{theorem}                           \label{down}
Take a finite extension $(L|K,v)$. If $(L,v)$ is a deeply ramified field, then so 
is $(K,v)$. The same holds for ``rdr'' and ``semitame'' in place of ``deeply 
ramified''.
\end{theorem}

\parm
The next theorem
addresses the connection of the properties we have defined with the classification of 
the defect. Take a valued field $(K,v)$ of residue characteristic $p>0$. If $\chara K=0$,
then we denote by $(vK)_{vp}$ the smallest convex subgroup of $vK$ that contains $vp$;
it is equal to the value group $v_p\circ \ovl{v}\,(Kv_0)$ which we defined earlier.
If $\chara K>0$, then we set $(vK)_{vp}=vK$. If $(K,v)$ is of mixed characteristic, 
then we set $K':=K(\zeta_p)$, where $\zeta_p$ is a primitive $p$-th roots of unity;
otherwise, we set $K':=K$. Then
we call $(K,v)$ an \bfind{independent defect field} if for some extension of $v$ to 
$K'$, all Galois defect extensions of $(K',v)$ of degree $p$ have independent
defect. (This definition does not depend on the chosen extension of $v$ as all 
extensions are conjugate.) We will show in Theorem~\ref{KEindep} that all rdr fields, 
and hence all deeply ramified and semitame fields, are independent defect fields.
\begin{remark}
If $(K,v)$ is an rdr field of mixed characteristic, then it does not necessarily 
contain a primitive $p$-th root of unity. In this case, a condition on Galois 
defect extensions may not contain enough 
information. We also need information on extensions by $p$-th roots which will 
then not be Galois. This is why we pass to the field $K'$ in our definition.
\end{remark}

From Proposition~\ref{rdrrk1} we see that in the case of fields $(K,v)$ of mixed
characteristic, $(Kv_0,v_p)$ is essential for the rdr property. In the theory of 
formally $p$-adic fields (cf.\ \cite{PR}), $Kv_0$ is called the \bfind{core field},
and it is usually considered with the valuation $v_p\circ \ovl{v}$. However, as we have 
just noted, the valuation $v_p$ itself is important, and we will have to work with its
residue field $(Kv_0)v_p=Kv_0 v_p\,$.
In the decomposition $v=v_0\circ v_p\circ \ovl{v}$ the valuation $v_p$ is at the center,
and so we define $\erf (K,v):= Kv_0 v_p$ as one may call it the \bfind{central residue
field}. If $(K,v)$ is of equal characteristic, we set $\erf (K,v):= Kv$.
\begin{theorem}                               \label{KEindep} 
1) Take a valued field $(K,v)$ with $\chara Kv=p>0$. Then $(K,v)$  is an rdr field 
if and only if $(vK)_{vp}$ is $p$-divisible, $\erf (K,v)$ is perfect, and 
$(K,v)$ is an independent defect field.
\sn
2) A nontrivially valued field $(K,v)$ is semitame if and only if every unibranched
Galois extension of $(K',v)$ of prime degree is either 
tame or an extension with independent defect.
\end{theorem}

\begin{remark}                         \label{roughly}
Let us consider a field $(K,v)$ of mixed characteristic. Then $(K,v_0)$ is a 
field of residue characteristic $0$. If $(K,v)$ is henselian, then so is $(K,v_0)$, and  
it is a defectless field and satisfies strong model theoretic principles (e.g.\ 
completeness, model completeness and decidability relative to their value groups and 
residue fields). Recently, the idea appeared in the literature that in order for $(K,v)$
to have such good properties, one only has to ask that the core field satisfies suitable
conditions. For example, \cite{HH} deals with the generalization of model theoretic
properties from the class of algebraically maximal Kaplansky fields to the more general
class of henselian fields whose core fields are algebraically maximal Kaplansky fields.
A valued field is called \bfind{algebraically maximal} (respectively, 
\bfind{separable-algebraically maximal}) if it admits no nontrivial immediate algebraic
(respectively, separ\-able-algebraic) extensions. Since henselizations are immediate 
separ\-able-algebraic extensions, every separable-algebraically maximal field is 
henselian. If the core field of $(K,v)$ is a Kaplansky field, then we may call $(K,v)$ a 
\bfind{roughly Kaplansky field}, as the passage from Kaplansky field to roughly 
Kaplansky field is nothing but the passage from $p$-divisible value group to roughly
$p$-divisible value group. 

Similarly, $(K,v)$ is a defectless field if and only if its core field is. Let us call
$(K,v)$ a \bfind{roughly tame field} if it is henselian and its core field is a tame 
field; again 
the generalization consists in replacing ``$p$-divisible value group'' by ``roughly
$p$-divisible value group''. It is shown in \cite{Rz} that a henselian field $(K,v)$ is 
roughly tame if and only if all of its algebraic extensions are defectless fields. 
The reader may note that in general, infinite algebraic extensions of defectless fields 
may not again be defectless fields.

We have chosen the name ``roughly deeply ramified field'' since in this case, the 
generalization of the notion ``deeply ramified field'' consists in replacing 
condition (DRvg) by condition ``roughly $p$-divisible value group'', which removes any condition on the value group $v_0 K$. It should be noted 
that the value group of every rdr and hence also every deeply ramified field is 
roughly $p$-divisible (see Lemma~\ref{basprop2}). From Theorem~\ref{algext} together
with part 1) of Theorem~\ref{KEindep} we know that for every rdr field, in analogy to 
the case of roughly tame fields, every algebraic extension is an independent defect 
field. At this point, we do not know whether the latter property characterizes the rdr
fields. In any case, it appears to be unlikely that a similar result can be shown with
``independent'' replaced by ``dependent''; if this is indeed impossible, then it is 
another indication that independent defect is more harmless than dependent defect.
\end{remark}

\parm
The classification of Galois defect extensions of prime degree in the equal
characteristic case is also an important tool in the proof of Theorem~1.2 of
\cite{[Ku6]}, which we will state now. 
\begin{theorem}                                \label{chardl}
A valued field of positive characteristic is a henselian and defectless field if 
and only if it is separable-algebraically maximal and each finite purely inseparable
extension is defectless.
\end{theorem}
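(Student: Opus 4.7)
For the direction $(a)\Rightarrow(b)$: Defectlessness trivially gives that each finite purely inseparable extension is defectless. Moreover, any finite immediate separable-algebraic extension $(L|K,v)$ has $[L:K] = d(L|K,v)\cdot(vL:vK)[Lv:Kv] = d(L|K,v) = 1$ by~(\ref{feuniq}) combined with defectlessness, so $L=K$ and separable-algebraic maximality holds.

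For $(b)\Rightarrow(a)$: henselianity follows at once, since the henselization is an immediate separable-algebraic extension of $(K,v)$ and must therefore coincide with $K$ by the first hypothesis. A quick upgrade gives algebraic maximality of $(K,v)$: a finite immediate algebraic extension $(L|K,v)$ factors through the separable closure $L^s$ of $K$ in $L$ as $K\subseteq L^s\subseteq L$, with both sub-extensions immediate by monotonicity of value group and residue field. Separable-algebraic maximality forces $L^s=K$, leaving $L|K$ as a nontrivial finite immediate purely inseparable extension of defect $[L:K]$, which is ruled out by the second hypothesis.

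It then remains to show that a henselian, algebraically maximal field whose finite purely inseparable extensions are all defectless is itself defectless. Reducing to the Galois case by passing to the Galois closure and exploiting multiplicativity of defect in towers, it suffices to bound $d(N|K)$ for finite Galois $N|K$. Ramification theory splits the tower as $K\subseteq V\subseteq N$, where $V$ is the ramification subfield of $N|K$; then $V|K$ is tame, hence defectless, and $N|V$ is Galois of $p$-power degree, so the entire defect lives in $N|V$. A composition series of $\mathrm{Gal}(N|V)$ produces a chain $V=F_0\subset F_1\subset\cdots\subset F_n=N$ of Galois degree-$p$ extensions, and if $d(N|V)>1$ then some step $F_{i+1}|F_i$ has defect $p$ and is therefore a Galois immediate defect extension of prime degree. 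Invoking the classification of such extensions (Section~\ref{sectASde} and Theorem~\ref{KEindep}), the step $F_{i+1}|F_i$ has either independent or dependent defect, and in either case one aims to derive an immediate algebraic extension of $K$ itself: an immediate separable-algebraic extension in the independent-defect case and an immediate purely inseparable extension in the dependent-defect case, each contradicting the corresponding part of hypothesis $(b)$.

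The main obstacle is precisely this final descent from an immediate Galois degree-$p$ extension of an intermediate field $F_i$ to an immediate algebraic extension of $K$. It demands good control of how property $(b)$ behaves under tame extensions and under defectless Galois degree-$p$ steps. For the dependent-defect case the linkage with purely inseparable defect extensions provided by the material in Section~\ref{sectASde} must be exploited carefully, so that a purely inseparable defect witness over $F_i$ actually descends to one over $K$; for the independent-defect case the $\Sigma_\cE$-description must be propagated down the tower to yield an immediate separable-algebraic extension of $K$ rather than merely of $F_i$. This transfer is the real technical heart of the argument.
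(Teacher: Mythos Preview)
Your reduction is sound: the forward direction, henselianity, the upgrade to algebraic maximality, and the passage to a tower of Galois degree-$p$ steps over a tame subextension all match the paper's route (the paper works over the absolute ramification field $K^r$ rather than the ramification field of a fixed $N|K$, but via Lemma~\ref{ramfield} this amounts to the same thing). You have also correctly located the crux: ruling out both independent and dependent defect at each step $F_{i+1}|F_i$.

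Where your proposal diverges is in the mechanism for this last part. You frame it as a \emph{descent}: from a defect step over $F_i$, produce an immediate algebraic extension of $K$ itself. The paper does not descend; it \emph{propagates the hypotheses upward}. For the dependent case, the key is Lemma~\ref{insep_defectless}: inseparable defectlessness is inherited by every finite extension, so each $F_i$ is inseparably defectless, and then Proposition~\ref{sep-insep} forbids dependent Artin--Schreier defect over $F_i$. Your descent sketch here is problematic because a purely inseparable witness $\eta$ with $\eta^p\in F_i$ need not satisfy $\eta^p\in K$. For the independent case, the paper's engine is Proposition~\ref{algmax_def}, which rests on the ``replacement'' Propositions~\ref{thetatheta'}/\ref{etaeta'}: take a maximal immediate extension $M$ of $K$; by algebraic maximality $K$ is relatively algebraically closed in $M$; by Lemma~\ref{coars_max}, $M.F_i$ is a maximal immediate extension of $F_i$ in which $F_i$ is relatively algebraically closed; then Corollary~\ref{no_indep0} and Lemma~\ref{coars_kummer} forbid any independent defect extension of $F_i$. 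Your idea of ``propagating the $\Sigma_\cE$-description down the tower'' is not how this works and is not obviously feasible.

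In short, the architecture is right, but the missing step is not a descent argument; it is the pair Lemma~\ref{insep_defectless} (upward preservation of inseparable defectlessness) and Proposition~\ref{algmax_def} (upward preservation, via maximal immediate extensions, of the obstruction to independent defect). These two facts, applied inductively up the tower, are what close the argument; see the discussion after Proposition~\ref{char_both}. Also, Theorem~\ref{KEindep} is not the right reference for the dichotomy here---it concerns gdr fields; the dependent/independent split is simply the definition from Section~\ref{sectASde}.
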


This theorem in turn is used in \cite{[K1]} for the construction of an example 
showing that a certain natural axiom system for the elementary theory of $\F_p((t))$
(``henselian defectless valued field of characteristic $p$
with residue field $\F_p$ and value group a $\Z$-group'') is not complete.

A full analogue of Theorem~\ref{chardl} in mixed characteristic is not presently known.
However, in a subsequent paper \cite{KR} we will show:
\begin{theorem}                                \label{chardlrdr}
Every algebraically maximal rdr field is a perfect, henselian and defectless field.
\end{theorem}

The study of mixed characteristic independent defect fields that are not rdr fields
is only at its infancy. We hope that the valuation theoretic proof of
Theorem~\ref{algext} will be a basis for further insight.
At this point, we are able to prove:
\begin{proposition}                                    \label{idf}
1) If $(K^r,v)$ is an independent defect field, then so is $(K,v)$.
\sn
2) A valued field $(K,v)$ of equal positive characteristic is an independent 
defect field if and only if
every immediate purely inseparable extension of $(K,v)$ lies in its completion.
\end{proposition}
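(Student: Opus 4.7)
The plan for part 1) is to lift an arbitrary Galois defect extension $\cE=(L|K,v)$ of prime degree $p$ to the ramification field $K^r$ and then transport independence of the defect down. Since $K^r|K$ is tame while $L|K$ is a defect extension of prime degree, $L\cap K^r=K$, hence $[L\cdot K^r:K^r]=p$ and a generator $\sigma$ of $\Gal(L|K)$ extends to a generator $\sigma^r$ of $\Gal(L\cdot K^r|K^r)$. By Proposition~\ref{K(a)K^r(a)} the extension $\cE^r:=(L\cdot K^r|K^r,v)$ has defect $p$, so it is itself a Galois defect extension of prime degree. Applying the hypothesis to $\cE^r$ yields
\[
\Sigma_{\cE^r} \;=\; \{\alpha\in vK^r\mid \alpha > H^r\}
\]
for some proper convex subgroup $H^r$ of $vK^r$. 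Since $vK^r$ is the $p'$-divisible hull of $vK$, its convex subgroups correspond bijectively to those of $vK$; setting $H:=H^r\cap vK$, a short check shows $H$ is proper and convex in $vK$ and $\Sigma_{\cE^r}\cap vK=\{\alpha\in vK\mid \alpha > H\}$. The inclusion $\Sigma_\cE\subseteq\Sigma_{\cE^r}\cap vK$ is then clear, because $\sigma^r f=\sigma f$ for $f\in L^\times$ and $(L|K,v)$ is immediate.

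The principal obstacle is the reverse inclusion: realizing each $\gamma\in vK$ strictly above $H$ as $v((\sigma f-f)/f)$ for some $f\in L^\times$. Since $\Sigma_\cE$ is a final segment of $vK$ by Theorem~\ref{dist_galois_p}, it suffices to show that $\Sigma_\cE$ is cofinal in $\{\alpha\in vK\mid\alpha>H\}$. Given $g\in(L\cdot K^r)^\times$ with $v((\sigma^r g-g)/g)=\gamma$, I would descend by approximation: exploit that the $1$-cocycle $g\mapsto\sigma^r g/g$ is unchanged when $g$ is scaled by an element of $(K^r)^\times$, and then replace $g$ by an $f\in L^\times$ with $v(g-f)$ large enough relative to both $v(g)$ and $v(\sigma^r g-g)$ to force $v((\sigma f-f)/f)=\gamma$. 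Making this descent respect the $K^r$-linear action of $\sigma^r$, while compensating for the fact that an arbitrary element of $K^r$ need not be approximable by an element of $K$, is the main technical hurdle.

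For part 2), the implication $(\Leftarrow)$ is essentially the remark recorded in the introduction: the definition of dependent defect (to be given in Section~\ref{sectASde}) forces any Galois defect extension of prime degree with dependent defect to come with an immediate purely inseparable defect extension of degree $p$ not contained in the completion $\hat K$. Hence the hypothesis rules out dependent defect altogether, so $(K,v)$ is an independent defect field. For $(\Rightarrow)$, I would argue by contrapositive: given $a\in K^{1/p}\setminus K$ immediate over $K$ with $a\notin\hat K$, I would follow the construction from \cite{[Ku6]} to produce an Artin--Schreier extension $L=K(\vartheta)$ with $\vartheta^p-\vartheta=\eta$, for a suitable $\eta\in K$ chosen to tie the Galois extension to $a$, and verify directly from the unified definition~(\ref{indepdef}) that $\Sigma_\cE$ is \emph{not} of the form $\{\alpha>H\}$ for any convex subgroup $H$ of $vK$, with the failure caused exactly by the fact that $a$ is not approximable by $K$. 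Since the construction and most of the computation already appear in \cite{[Ku6]}, the remaining work here is to confirm that the new characterization~(\ref{indepdef}) correctly detects dependence in this explicit extension.
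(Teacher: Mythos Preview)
Your treatment of part 2) is fine and matches the paper: both directions are exactly parts 1) and 2) of Proposition~\ref{sep-insep}, whose part 3) packages them together.

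For part 1), however, your descent argument has a real gap. You try to realise every $\gamma\in vK$ with $\gamma>H$ as $v((\sigma f-f)/f)$ by taking an arbitrary $g\in(L\cdot K^r)^\times$ achieving $\gamma$ and approximating it by some $f\in L^\times$. But elements of $K^r$ are in general \emph{not} well approximated by elements of $K$ (already $vK^r\ne vK$ in most cases), and you yourself flag this as ``the main technical hurdle'' without resolving it. There is no reason such an approximation should exist for a generic $g$.

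The paper bypasses this entirely by working with a \emph{specific} generator $a$ of $L|K$ (an Artin--Schreier generator, or a $1$-unit Kummer generator) rather than arbitrary elements. For such $a$ one has $\dist(a,K)=\dist(a,K^r)$ by Proposition~\ref{K(a)K^r(a)} (this is the nontrivial input, proved via Lemmas~\ref{dist_hens} and~\ref{dist_defectless_imm}), and then Corollary~\ref{distdist} gives $\dist\cE=\dist\cE^r$. Since independent defect is characterised by $\dist\cE=H^-$ for a proper convex subgroup $H$ of $\widetilde{vK}$ (Proposition~\ref{distAS}), and $\widetilde{vK}=\widetilde{vK^r}$, independence of $\cE^r$ immediately gives independence of $\cE$. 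This is the content of Proposition~\ref{idfKK^r}, which the paper simply cites. Equivalently, in your language, Theorem~\ref{dist_ext_p} gives $\Sigma_\cE=-v(a-K)+v(\sigma a-a)$; the equality $\dist(a,K)=\dist(a,K^r)$ says precisely that $v(a-K)$ and $v(a-K^r)$ are cofinal in each other, which directly yields $\Sigma_\cE=\Sigma_{\cE^r}\cap vK$ without any approximation of arbitrary elements.
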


\pars
It is an important fact that the properties of valued fields of being henselian, tame, 
semitame, deeply ramified or rdr all are preserved under infinite algebraic extensions.
In contrast to this, the properties of being a defectless or an independent defect field 
are not necessarily preserved, as will be shown in 
\cite{KR} by the construction of a suitable algebraic extension of $\Q_p\,$. However, we
conjecture that if $(K,v)$ is an independent defect field, then so is $(K^r,v)$.

\parm
Continuing the work presented in \cite{[CP]}, the idea has been suggested to
employ higher ramification groups for the study of the ramification theory of 
2-dimensional valued function fields. When working over valued fields with arbitrary
value groups, the classical ramification numbers have to be replaced by
\bfind{ramification jumps} which can be understood as cuts (or equivalently, final
segments) in the value group (cf.\ Section~\ref{secthrg}). We will characterize 
Galois defect extensions $\cE$ of prime degree having independent defect via their
ramification jumps $\Sigma_\cE$ and their associated ramification ideals $I_\cE\,$. 
In Section~\ref{sectGaldefdegp} we will prove:
\begin{theorem}                              \label{eqindep}
Take a Galois defect extension $\cE=(K(a)|K,v)$ of prime degree.
Then $\Sigma_{\cE}$ is the unique ramification jump of $\cE$, $I_\cE$ is the unique
ramification ideal of $\cE$. If in addition $(K,v)$ satisfies (DRvg), then the following
assertions are equivalent:
\sn
a) $\cE$ has independent defect,
\sn
b) the ramification jump of $\cE$ is equal to $\{\alpha\in vK\mid \alpha >H_\cE\}$
for some proper convex subgroup $H_\cE$ of $vK$,
\sn
c) the ramification ideal of $\cE$ is a nontrivial prime ideal of $\cO_L$.
\sn
If assertion c) holds, then the localization of $\cO_L$ with respect to the ramification
ideal is a valuation ring on $L$ containing
$\cO_L$, i.e., the associated valuation is a coarsening of $v$.
\sn
If the rank of $(K,v)$ is 1, then $H_\cE$ can only be equal to $\{0\}$ and if the
ramification ideal is a prime ideal, then it can only be equal to $\cM_L\,$.
\end{theorem}

More equivalent conditions (without the assumption of (DRvg)) will be given in a 
subsequent paper.

\bn
%
%
%
\section{Preliminaries}                              \label{sectprel}
\subsection{Cuts, distances and defect}                      \label{sectcdist}
\mbox{ }\sn
We recall basic notions and facts connected with cuts in ordered abelian groups and 
distances of elements of valued field extensions. For the details and proofs see 
Section 2.3 of \cite{[Ku6]} and Section 3 of \cite{[KV]}.

Take a totally ordered set $(T,<)$. For a nonempty subset $S$ of $T$ and an element 
$t\in T$ we will write $S<t$ if $s<t$ for every $s\in S$. A set $S\subseteq T$ is called 
an \bfind{initial segment} of $T$ if for each
$s\in S$ every $t <s$ also lies in $S$. Similarly, $S\subseteq T$ is called
a \bfind{final segment} of $T$ if for each $s\in S$ every $t >s$ also lies in $S$.
A pair $(\Lambda^L,\Lambda^R)$  of subsets of $T$ is called a \bfind{cut} in $T$ if 
$\Lambda^L$ is an initial segment of $T$ and $\Lambda^R=T\setminus \Lambda^L$; it then 
follows that $\Lambda^R$ is a final segment of $T$. To compare cuts in  $(T,<)$ we will 
use the lower cut sets comparison. That is, for two cuts
$\Lambda_1=(\Lambda_1^L,\Lambda_1^R) ,\, \Lambda_2=(\Lambda_2^L,\Lambda_2^R)$ in $T$ we 
will write $\Lambda_1<\Lambda_2$ if $\Lambda^L_1\varsubsetneq\Lambda^L_2$, and $\Lambda_1
\leq \Lambda_2$ if $\Lambda^L_1 \subseteq  \Lambda^L_2$.

For any $s\in T$ define the following \bfind{principal cuts}:
\[
s^- \,:=\, (\{t\in T \,| \, t< s\}, \{t\in T \,|\, t\geq s\})\>,\;
s^+ \,:=\, (\{t\in T \,| \, t\leq s\}, \{t\in T \,|\, t>s\})\>.
\]
We identify the element $s$ with $s^+$. Therefore, for a cut 
$\Lambda=(\Lambda^L,\Lambda^R)$ in $T$ and an element $s\in T$ the inequality 
$\Lambda <s$ means that for every element $t\in \Lambda^L$ we have $t<s$. Similarly, 
for any subset $M$ of $T$ we define $M^+$ to be 
a cut $(\Lambda^L,\Lambda^R)$ in $T$ such that
$\Lambda^L$ is the smallest initial segment containing $M$, that is,
\[
M^+\>=\> (\{t\in T\,|\, \exists\, m\in M \,\,t\leq m\}, \{t\in T\,|\, t>M\} )\>.
\]
Likewise, we denote by $M^-$ the cut $(\Lambda^L,\Lambda^R)$ in $T$ such that $\Lambda^L$ 
is the largest initial segment disjoint from $M$, i.e.,
\[
M^-\>=\> (\{t\in T\,|\, t<M\}, \{t\in T\,|\, \exists\, m\in M \,\,t\geq m\} )\>.
\]

\parm
When $M$ is a subset of the value group $vK$ of a valued field $(K,v)$, we will 
{\bf always denote by $M^-$ and $M^+$ the cuts induced in the divisible hull 
$\widetilde{vK}$ of $vK\>$.} We note that if $H$ is a subgroup of $vK$, then its convex 
hull in $\widetilde{vK}$ is a convex subgroup of $\widetilde{vK}$. In connection with 
our definition of independent defect, we will need the following fact:
\begin{lemma}                        \label{S^+=H^-}
Take an initial segment $\Sigma$ of $vK$ and a proper convex subgroup $H$ of $vK$. Then 
\[
\Sigma^+\>=\>H^-
\]
(as cuts in $\widetilde{vK}$) if and only if $\,-\Sigma=\{\alpha\in vK\mid \alpha >H\}$ 
and $vK/H$ has no smallest positive element.
\end{lemma}
\begin{proof}
$\Rightarrow$: Suppose that $vK/H$ has a smallest positive element $\alpha+H$, $\alpha
\in vK$. Then the element $-\alpha/2\in\widetilde{vK}$ is larger than all elements of 
$\Sigma$, but does not lie in the convex hull of $H$ in $\widetilde{vK}$. Therefore, 
$\Sigma^+<\alpha/2<H^-$, showing that $\Sigma^+ \ne H^-$.
\sn
$\Leftarrow$: From $-\Sigma=\{\alpha\in vK\mid \alpha >H\}$ it follows that $\Sigma^+ 
\leq H^-$. If equality does not hold, then there is $\beta\in\widetilde{vK}$ such that 
$\Sigma^+ < \beta < H^-$. Take $n\in\N$ such that $n\beta\in vK$. 
As $vK/H$ has no smallest positive element, the positive elements cannot be bounded away
from $0$ by an element in the divisible hull of $vK/H$, and so there must be $\gamma\in
vK$ such that $0<\gamma+H<\frac 1 n (-n\beta+H)$. This implies $n\gamma +H<-n\beta+H$,
whence $n\gamma<-n\beta$ in $vK$ and $-\gamma>\beta$ in $\widetilde{vK}$. This implies 
that $-\gamma>\Sigma^+$. But $0<\gamma+H$ means that $-\gamma<0$ does not lie in $H$, so 
it lies in $\Sigma$, contradiction. Thus, $\Sigma^+=H^-$ must hold.
\end{proof}

\pars
For every extension $(L|K,v)$ of valued fields and $z\in L$ define
\[
v(z-K)\>:=\>\{v(z-c)\, |\, c\in K\}\>.
\]
The set $v(z-K)\cap vK$ is an initial segment of $vK$ and thus the lower cut set of 
\mbox{a cut} in $vK$. However, it is more convenient to work with the cut
\[
\dist (z,K)\>:=\>(v(z-K)\cap vK)^+\;\; \textrm{ in the divisible hull $\widetilde{vK}$ 
of $vK\>$.}
\]

We call this cut the \bfind{distance of $z$ from $K$}. The lower cut set of $\dist (z,K)$ 
is the smallest initial segment of $\widetilde{vK}$ containing $v(z-K) \cap vK$. If 
$(F|K,v)$ is an algebraic subextension of $(L|K,v)$ then $\widetilde{vF}=\widetilde{vK}$.
Thus $\dist (z,K)$ and $\dist (z,F)$ are cuts in the same group and we can compare these 
cuts by set inclusion of the lower cut sets.
Since $v(z-K)\subseteq v(z-F)$ we deduce that
\[
\dist (z,K)\> \leq  \>\dist (z,F)\>.
\]

If $\chara K=p>0$ and $z\in K$, then $K^p$ is a subfield of $K$, and the expressions
$v(z-K^p)$ and $\dist(z,K^p)$
are covered by our above definitions. We generalize this to the case where $\chara K=0$ 
with the same definitions but note that $v(z-K^p)\cap vK$ is not necessarily an initial 
segment of $vK$.

\pars
If $y$ is another element of $L$, then we define:
\[
z\sim_K y\;:\Leftrightarrow\; v(z-y)\> >\> \dist (z,K)\>.
\]
The next lemma shows, among other things, that the relation $\sim_K$ is symmetrical.
\begin{lemma}                                     \label{relation}
Take a valued field extension $(L|K,v)$ and elements $z,y\in L$. 
\sn
1) If $z\sim_K y$, then $v(z-c)=v(y-c)$ for all $c\in K$ such that $v(z-c)\in vK$,  
$v(z-K)=v(y-K)$, $\dist (z,K)=\dist (y,K)$, and $y\sim_K z$.
\sn
2) If $(K(z)|K,v)$ is immediate, then $v(z-K)$ is a subset of $vK$ without largest 
element.
\end{lemma}
\begin{proof}
1): This is part (1) of Lemma 2.17 in \cite{[Ku6]}.
\sn
2): It follows from \cite[Theorem 1]{[Ka]} that $v(z-K)$ has no largest element. To prove 
that it is a subset of $vK$, take $c\in K$; we wish to show that $v(z-c)\in vK$. Choose 
$d\in K$ such that $v(z-d)>v(z-c)$. Then $v(z-c)=\min\{v(z-c),v(z-d)\}=v(c-d)\in vK$.
\end{proof}

For any $\alpha \in vK$ and each cut $\Lambda $ in $vK$ we set $\alpha +\Lambda 
:=(\alpha +\Lambda^L, \alpha+\Lambda^R)$. \mbox{An immediate} consequence of the 
above definitions is the following lemma:
\begin{lemma}                                          \label{dist}
Take an extension $(L|K,v)$ of valued fields. Then for every element $c\in K$ and 
$y,z\in L$,
\sn
1) $\dist (z+c,K)=\dist (z,K)$,
\sn
2) $\dist (cz,K)=vc+\dist (z,K)$.
\end{lemma}

Here are some important properties of distances in valued field
extensions. For the proof of the next lemma see \cite[Lemma 7]{[BK3]} and 
\cite[Lemma 2.5]{[Ku6]}.
\begin{lemma}                        \label{ivd} 
Take any immediate extension $(F|K,v)$ and a finite defectless unibranched
extension $(L|K,v)$.
Then the extension of $v$ from $F$ to $F.L$ is unique, $(F.L|F,v)$ is
defectless, $(F.L|L,v)$ is immediate, and for every $a\in F\setminus K$ we have: 
\[
\dist(a,K)=\dist(a,L).
\]
Moreover, $F|K$ and $L|K$ are linearly disjoint, i.e.,
\[
[F.L:F] \>=\> [L:K]\>.
\]
\end{lemma}

\pars
For the proof of the following results see \cite[Lemmas 5 and 9]{[BK3]}.
\begin{lemma}	                                           \label{dist_hens}
Take a unibranched extension $(F|K,v)$ and an extension of
$v$ to the algebraic closure of $F$. Take $K^h$ to be the henselization of $K$ with
respect to this fixed extension
of $v$. Then for every $a\in F$ we have that $[K(a):K]=[K^h(a):K^h]$ as well as
\[
d(K(a)|K,v)\>=\>d(K^h(a)|K^h,v) \quad \mbox{ and }\quad \dist(a,K)\>=\>\dist(a,K^h)\>.
\]
\end{lemma}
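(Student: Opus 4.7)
The plan is to reduce all three equalities to the single observation that $K^h(a)$ coincides with the henselization $K(a)^h$ of $(K(a),v)$, computed inside $\tilde{K}$. This henselization is well defined because the hypothesis that $(F|K,v)$ is unibranched forces $v$ to have a unique extension from $K$ to $K(a)$. The inclusion $K(a)^h\subseteq K^h(a)$ holds because $K^h(a)$, being a finite extension of the henselian field $K^h$, is itself henselian and contains $K(a)$, and hence contains any henselization of $K(a)$. Conversely, $K(a)^h$ contains $K(a)$ and, by the universal property of henselization, also contains $K^h$, so it contains $K^h\cdot K(a)=K^h(a)$. In particular, $K^h(a)|K(a)$ is immediate.

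To obtain $[K(a):K]=[K^h(a):K^h]$, I would invoke the Galois-theoretic description of $K^h$ as the fixed field of the decomposition group $G^d\subseteq\Gal(K\sep|K)$ of the chosen extension of $v$. Uniqueness of the extension of $v$ to $K(a)$ forces $G^d$ to act transitively on the distinct $K$-conjugates of the separable part of $a$, giving $[K^h(a):K^h]_s=[K(a):K]_s$. Since $K^h|K$ is separable-algebraic, the inseparable degrees of $K(a)|K$ and $K^h(a)|K^h$ also coincide, and the equality of full degrees follows.

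The defect equality is then automatic. Because both $K^h|K$ and $K^h(a)|K(a)$ are immediate, we have $(vK(a):vK)=(vK^h(a):vK^h)$ and $[K(a)v:Kv]=[K^h(a)v:K^hv]$; dividing the equality $[K(a):K]=[K^h(a):K^h]$ by these common factors and appealing to formula (\ref{feuniq}) yields $d(K(a)|K,v)=d(K^h(a)|K^h,v)$.

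For the distance, the inclusion $v(a-K)\cap vK\subseteq v(a-K^h)\cap vK$ gives $\dist(a,K)\leq\dist(a,K^h)$ at once. For the reverse, I would use the density of $K$ in $K^h$ in the $v$-topology: given $c\in K^h$ with $\beta:=v(a-c)\in vK$, choose $c'\in K$ with $v(c-c')\geq\beta$; then $v(a-c')\geq\beta$, so $\beta$ lies in the lower cut set of $\dist(a,K)$, whence $\dist(a,K^h)\leq\dist(a,K)$. The two obstacles I expect to watch out for are a clean unified treatment of the degree step when $a$ is neither separable nor purely inseparable, and a rank-free justification of the density of $K$ in $K^h$ (for rank $1$ this follows from $K^h\subseteq\hat{K}$, but in general one has to argue directly via the construction of $K^h$ as a filtered colimit of étale pointed $K$-algebras).
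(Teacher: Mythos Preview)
The paper does not give its own proof of this lemma; it simply refers to \cite[Lemmas~5 and~9]{[BK3]}. Your reduction to $K^h(a)=K(a)^h$ and the arguments for the degree and defect equalities are correct and standard: the degree statement is exactly the well-known equivalence between ``$(K(a)|K,v)$ unibranched'' and linear disjointness of $K(a)$ and $K^h$ over $K$, and your handling of the inseparable part is fine since $K^h|K$ is separable.

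The gap is in the distance step. You reduce $\dist(a,K^h)\le\dist(a,K)$ to the claim that $K$ is dense in $K^h$ for the $v$-topology, acknowledge that this is not obvious beyond rank~$1$, and propose to justify it via the \'etale-colimit description of the henselization. But density of $K$ in $K^h$ is simply \emph{false} in rank $\ge 2$, so no such justification exists. A concrete counterexample: let $K=k(x,y)$ with $\chara k\ne 2$, and let $v$ be the composite of the $x$-adic place $w$ on $k(y)(x)$ (residue field $k(y)$) with the $y$-adic place on $k(y)$, so that $vK\cong\Z\times\Z$ lexicographically with $vx=(1,0)$ and $vy=(0,1)$. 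The polynomial $T^2-(1+y)$ reduces to $T^2-1$ over $Kv=k$, so Hensel puts $c:=\sqrt{1+y}$ into $K^h$. For any $c'\in K$ the $w$-residue $c'w$ lies in $k(y)$, while $cw=\sqrt{1+y}\notin k(y)$; hence $w(c-c')\le 0$ and $v(c-c')$ never reaches $(1,0)$. Thus $K$ is not dense in $K^h$.

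What makes the lemma true is the unibranched hypothesis on $a$, which your density argument ignores entirely (note that $K(c)|K$ in the example above is \emph{not} unibranched). One ingredient pointing in the right direction: for $c\in K^h$ and $\sigma$ in the decomposition group $G^d$ one has $\sigma c=c$ and $v\circ\sigma=v$, whence $v(\sigma a-c)=v(a-c)$ and therefore $v(\sigma a-a)\ge v(a-c)$; since unibranchedness forces every $K$-conjugate of $a$ to be of the form $\sigma a$ with $\sigma\in G^d$, this bounds $v(a-c)$ by the Krasner constant of $a$. This, together with the fact that $v(c-K)$ has no maximum (as $K^h|K$ is immediate), is the sort of input the argument actually needs; your sketch does not reach it.
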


For the proof of the next proposition, see \cite{[Ku6]}, Proposition 2.8.
\begin{proposition}			\label{tamedef}
Take a henselian field $(K,v)$ and a tame extension $N$ of $K$. Then for any finite 
extension $L|K$,
\[
d(L|K,v) \>=\> d(L.N|N,v)\>.
\]
In particular, $(K,v)$ is defectless 
if and only if $(K^r,v)$ is defectless. 
\end{proposition}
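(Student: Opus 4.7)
The plan is to exploit multiplicativity of the defect in towers, combined with the principle that a tame extension of a henselian field remains tame after base change by any finite extension.

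First, I would reduce to the case where $N|K$ is finite. Since $L|K$ is finite, so is $L.N|N$, and a standard limit argument over the directed family of finite subextensions $N_0|K$ of $N|K$ produces such an $N_0$ with $d(L.N_0|N_0,v)=d(L.N|N,v)$, together with matching value groups and residue fields for the compositum. Because every finite subextension of a tame extension is tame (directly from (TE1)--(TE3)), I may replace $N$ by $N_0$ and assume $N|K$ is finite. In this finite situation the henselianity of $K$ ensures that the valuation extends uniquely to $L$, $N$ and $L.N$, so the fundamental equality applies in the two towers $L.N|L|K$ and $L.N|N|K$; cancelling the common factor $(v(L.N):vK)[(L.N)v:Kv]$ yields
\[
d(L.N|L,v)\cdot d(L|K,v) \;=\; d(L.N|N,v)\cdot d(N|K,v).
\]
Since tame extensions are defectless, $d(N|K,v)=1$, so the proposition reduces to showing $d(L.N|L,v)=1$.

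This last defectlessness is the main obstacle, and I would attack it by proving that $L.N|L$ is itself tame. The field $L$ is finite over the henselian field $K$, hence henselian, so it is enough to verify (TE1)--(TE3) for $L.N|L$. Condition (TE1) transfers because $(v(L.N):vL)$ divides $(vN:vK)$, which is coprime to $p=\chara Kv$; condition (TE2) transfers because $(L.N)v$ is generated over $Lv$ by residues of elements of $N$, which satisfy separable polynomial relations over $Kv$. The genuinely nontrivial content is (TE3): for this I would fix an extension of $v$ to $\tilde{K}$ so that $N\subseteq K^r$, and then invoke the standard translation theorem for tame extensions of henselian fields (cf.\ \cite[Ch.~22]{[En]}) to conclude that $L.K^r|L$ is tame; hence so is its subextension $L.N|L$.

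For the ``in particular'' assertion, I would specialise the first part to $N=K^r$. Every finite extension $M$ of $K^r$ has the form $L.K^r$ for some finite extension $L|K$: choose generators $a_1,\ldots,a_n$ of $M$ over $K^r$, which are algebraic over $K$ since $K^r|K$ is, and set $L=K(a_1,\ldots,a_n)$. The equality $d(M|K^r,v)=d(L|K,v)$ then transfers the defectless property back and forth between $(K,v)$ and $(K^r,v)$; the separable (respectively purely inseparable) variants follow by noting that $L|K$ is separable (purely inseparable) if and only if $L.K^r|K^r$ is, so the argument restricts to the corresponding class of finite extensions on either side.
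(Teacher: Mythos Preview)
The paper does not prove this proposition; it simply cites \cite[Proposition~2.8]{[Ku6]}. Your outline is essentially the standard argument and is correct in its main points: reduce to finite $N$, use multiplicativity of defect in the two towers $K\subseteq L\subseteq L.N$ and $K\subseteq N\subseteq L.N$, and kill the two unwanted factors by showing that both $N|K$ and $L.N|L$ are tame (hence defectless).

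Two remarks. First, your separate verification of (TE1) and (TE2) for $L.N|L$ is superfluous and, as written, not watertight (for instance, $(L.N)v$ is not in general generated over $Lv$ just by residues of elements of $N$). The ramification-field step you give for (TE3) already does the whole job: once you fix an extension of $v$ to $\tilde K$ with $N\subseteq K^r$, the standard fact $L.K^r=L^r$ shows $L.N\subseteq L^r$, and every subextension of the maximal tame extension $L^r|L$ is tame, giving (TE1)--(TE3) simultaneously.

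Second, in the ``in particular'' clause, the biconditional ``$L|K$ purely inseparable iff $L.K^r|K^r$ purely inseparable'' is not quite what you need and one direction is shaky: given a finite purely inseparable $M|K^r$ with generators $a_i$, one has $a_i^{p^{k_i}}\in K^r$, not in $K$, so $L=K(a_1,\ldots,a_n)$ need not be purely inseparable over $K$. The clean fix is to note that $M\subseteq (K^r)^{1/p^\infty}=K^r\cdot K^{1/p^\infty}$, so $M\subseteq K^r.L_0$ for some finite purely inseparable $L_0|K$; then $d(M|K^r,v)$ divides $d(K^r.L_0|K^r,v)=d(L_0|K,v)=1$ by the first part. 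The separable variant, and both converse directions, work exactly as you wrote.
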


\pars
For the following theorem, see \cite[Theorem 1]{[Ka]} and \cite[Theorem 2.19]{[Ku6]}.
\begin{theorem}                                              \label{CutImm}
If $(L|K,v)$ is an immediate extension of valued fields, then for every element 
$a\in L\setminus K$ the set $v(a-K)$ is an initial segment of $vK$ without maximal 
element. In particular, $va<\dist (a,K)$.
\end{theorem}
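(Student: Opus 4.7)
The plan is to prove the three assertions of Theorem~\ref{CutImm} in sequence. Throughout, let $a\in L\setminus K$ and recall $v(a-K)=\{v(a-c)\mid c\in K\}$.

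First I would verify that $v(a-K)\subseteq vK$: since $(L|K,v)$ is immediate, $vL=vK$, and each $v(a-c)$ lies in $vL$. Next I would show that $v(a-K)$ is an initial segment of $vK$. Given $\alpha \in vK$ with $\alpha \leq v(a-c)$ for some $c\in K$, if $\alpha = v(a-c)$ we are done; otherwise pick $b\in K$ with $vb=\alpha$. Because $vb<v(a-c)$, the ultrametric triangle equality yields
\[
v\bigl(a-(c+b)\bigr)\>=\>\min\{v(a-c),vb\}\>=\>\alpha,
\]
so $\alpha \in v(a-K)$.

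The main work is showing that $v(a-K)$ has no maximal element. Suppose for contradiction that $\gamma := v(a-c_0)$ is maximal for some $c_0\in K$. Set $b:=a-c_0\in L$; then $vb=\gamma\in vK$ since the extension is immediate. Choose $d\in K$ with $vd=\gamma$. The element $b/d\in L$ has value $0$, hence its residue lies in $Lv=Kv$ (again by immediacy), so there exists $e\in \cO_K$ with $\overline{b/d}=\overline{e}$ in $Lv$, i.e., $v(b/d-e)>0$. Multiplying by $d$ gives $v(b-de)>\gamma$, that is,
\[
v\bigl(a-(c_0+de)\bigr)\>>\>\gamma,
\]
contradicting the maximality of $\gamma$. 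This is the only step that truly uses both the value group and residue field parts of immediacy simultaneously, so I expect it to be the delicate point, though the argument itself is routine once one writes down the standard ``normalize and lift the residue'' trick.

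Finally, for the ``in particular'' assertion, note that $0\in K$ gives $va=v(a-0)\in v(a-K)$. By the definition of the distance, the lower cut set of $\dist(a,K)=v(a-K)^+$ contains $\{t\in\widetilde{vK}\mid t\leq va\}$; and since $v(a-K)$ has no maximum by the previous step, there is $c\in K$ with $v(a-c)>va$, so $v(a-c)$ lies in the lower cut set of $\dist(a,K)$ but not in $\{t\mid t\leq va\}$. Under the lower cut set comparison this gives the strict inequality $va<\dist(a,K)$, completing the proof.
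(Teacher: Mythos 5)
Your proof is correct, and it is essentially the standard argument: the paper does not prove this statement itself but cites it from Kaplansky's Theorem 1 and \cite[Theorem 2.19]{[Ku6]}, where the same ``normalize by an element of equal value and lift the residue'' trick is used to rule out a maximal element of $v(a-K)$. The remaining steps (initial segment via the ultrametric equality, and $va<\dist(a,K)$ via the lower cut set comparison) are also handled correctly.
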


The following partial converse of this theorem also holds 
(see~\cite[Lemma 4.1]{[Bl3]}, cf. also~\cite[Lemma 2.21]{[Ku6]}):
\begin{lemma}                                   \label{imm_deg_p}
Assume that $(K(a)|K,v)$ is a unibranched extension of prime degree such 
that $v(a-K)$ has no maximal element. Then the
extension $(K(a)|K,v)$ is immediate and hence a defect extension.
\end{lemma}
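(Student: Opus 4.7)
The plan is to prove the contrapositive: assume $(K(a)|K,v)$ is unibranched of prime degree $p$ and \emph{not} immediate, and deduce that $v(a-K)$ attains a maximum. Since $p$ is prime, the fundamental equality (\ref{feuniq}) combined with non-immediacy forces $(K(a)|K,v)$ to be defectless with exactly one of $(vK(a):vK)$ and $[K(a)v:Kv]$ equal to $p$ (the other being $1$). I will handle these two cases by constructing a convenient $K$-basis of $K(a)$ and reading off $v(a-c)$ as a minimum.

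Suppose first that $(vK(a):vK)=p$. Pick $\pi\in K(a)$ with $v\pi\notin vK$; since $vK(a)/vK$ is cyclic of order $p$, the values $iv\pi$ for $i=0,\ldots,p-1$ lie in pairwise distinct cosets of $vK$. The strong triangle inequality therefore gives $v(\sum c_i\pi^i)=\min_i(vc_i+iv\pi)$ for any $c_i\in K$, so $\{1,\pi,\ldots,\pi^{p-1}\}$ is $K$-linearly independent and hence a basis. Writing $a=\sum c_i\pi^i$ and applying the formula to $a-c$,
\[
v(a-c) \>=\> \min\bigl\{v(c_0-c),\,vc_1+v\pi,\,\ldots,\,vc_{p-1}+(p-1)v\pi\bigr\}.
\]
The choice $c=c_0$ sends the first entry to $\infty$, while the remaining minimum is a finite element of $vK(a)$ because $a\notin K$ forces some $c_i\neq 0$ with $i\geq 1$. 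So $v(a-K)$ attains its maximum at $c_0$, contradicting the hypothesis.

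Now assume $[K(a)v:Kv]=p$. Pick $b\in\cO_{K(a)}$ with $bv$ of degree $p$ over $Kv$; then $1,bv,\ldots,(bv)^{p-1}$ are $Kv$-linearly independent. Rescaling any expression $\sum c_ib^i$ by an element of $K$ of value $\min_ivc_i$ and reducing modulo $\cM_{K(a)}$ yields the analogous formula $v(\sum c_ib^i)=\min_ivc_i$, so $\{1,b,\ldots,b^{p-1}\}$ is a $K$-basis. Writing $a=\sum c_ib^i$ produces
\[
v(a-c) \>=\> \min\{v(c_0-c),\,vc_1,\,\ldots,\,vc_{p-1}\},
\]
and once again $c=c_0$ gives a finite maximum of $v(a-K)$, a contradiction. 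This proves $(K(a)|K,v)$ is immediate, and then (\ref{feuniq}) reduces to $p=\tilde p^{\nu}$, forcing $\tilde p=p$ and $\nu=1$; hence $d(K(a)|K,v)=p>1$, so the extension is a defect extension.

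The only delicate steps are the two valuation formulas $v(\sum c_i\pi^i)=\min_i(vc_i+iv\pi)$ and $v(\sum c_ib^i)=\min_ivc_i$; both are routine applications of the strong triangle inequality, via distinct value cosets in the first case and $Kv$-linearly independent residues in the second, so I expect no serious obstacle.
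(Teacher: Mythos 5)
Your proof is correct. The paper does not prove this lemma itself but only cites \cite[Lemma 2.21]{[Ku6]}, and your argument is essentially the standard one behind that reference: since the degree is prime, the fundamental equality (\ref{feuniq}) forces a non-immediate unibranched extension to be defectless with either pure value-group or pure residue-field extension of degree $p$, and in each case the valuation basis $\{1,\pi,\ldots,\pi^{p-1}\}$ (respectively $\{1,b,\ldots,b^{p-1}\}$) lets you read off $v(a-c)$ as a minimum and exhibit $c=c_0$ as realizing the maximum of $v(a-K)$. Both valuation formulas are justified exactly as you say (pairwise distinct value cosets, respectively $Kv$-linear independence of the residues), and the final deduction that immediacy plus prime degree yields defect $p$ is also correct.
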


The property that the set $v(a-K)$ has no maximal element does not in general imply 
that $(K(a)|K,v)$ is immediate. However, the next lemma (see e.g.~\cite[Lemma 2.1]
{[KV]}) shows that if in addition $(K,v)$ is henselian
and $a$ is algebraic over $K$, then $(K(a)|K,v)$ is a defect extension.
\begin{lemma}                                         \label{dist_defectless}
If $(K,v)$ is henselian and $(L|K,v)$ is a finite defectless extension, then for every
element $a\in L$ the set $v(a-K)$ admits a maximal element.
\end{lemma}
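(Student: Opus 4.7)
The plan is to use a valuation basis of $L$ over $K$ to read off the maximum of $v(a-K)$ by a direct computation; there is no need to pass to the henselization.

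\textbf{Step 1: valuation basis.} Set $e := (vL:vK)$ and $f := [Lv:Kv]$, so that $ef = [L:K]$ by the defectless hypothesis. Choose $\pi_1 = 1, \pi_2, \ldots, \pi_e \in L^\times$ whose values form a system of representatives of the cosets of $vL/vK$, and $b_1 = 1, b_2, \ldots, b_f \in \cO_L^\times$ whose residues form a $Kv$-basis of $Lv$. I claim that the $ef$ products $\pi_i b_j$ form a $K$-basis of $L$ which, moreover, satisfies the valuation formula
\[
v\Bigl(\sum_{i,j} k_{ij}\pi_ib_j\Bigr) = \min_{i,j} v(k_{ij}\pi_ib_j) \quad\text{for all } k_{ij}\in K.
\]
This rests on two coset/residue observations: since the $v(\pi_i)$ lie in pairwise distinct cosets of $vK$, no cancellation can occur between summands with different $i$; and if several summands with a fixed $i=i_0$ attain the minimum, then factoring out an element of $K$ of that value and passing to residues would produce a nontrivial $Kv$-linear relation among $\bar b_1,\ldots,\bar b_f$, contradicting the choice of the $b_j$. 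The unibranchedness ensures that $v$ extends uniquely to $L$, so there is no ambiguity in any of the valuations above.

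\textbf{Step 2: locating the maximum.} Fix $a\in L$. If $a\in K$, then $v(a-a)=\infty$ is trivially the maximum, so assume $a\notin K$. Expand $a = \sum_{i,j} k_{ij}\pi_ib_j$ with $k_{ij}\in K$; since $a\notin K$, some $k_{ij}$ with $(i,j)\neq(1,1)$ is nonzero, so
\[
\gamma := \min_{(i,j)\neq(1,1)} v(k_{ij}\pi_ib_j)
\]
is a well-defined element of $vL$. For every $c\in K$,
\[
a-c = (k_{11}-c)\cdot\pi_1b_1 + \sum_{(i,j)\neq(1,1)} k_{ij}\pi_ib_j,
\]
so the valuation basis formula applied to this sum yields
\[
v(a-c) = \min\bigl(v(k_{11}-c),\,\gamma\bigr) \leq \gamma.
\]
Setting $c=k_{11}\in K$ gives $v(a-k_{11}) = \gamma$, so $\gamma$ is the maximum of $v(a-K)$, attained at $c=k_{11}$.

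The only substantive obstacle is the valuation basis formula of Step~1, which is where the defectless hypothesis enters the argument valuation-theoretically. Once the formula is granted, the conclusion reduces to the one-line observation that the coefficient $k_{11}$ of $\pi_1b_1 = 1$ in the basis expansion of $a$ provides the optimal approximation of $a$ from within $K$.
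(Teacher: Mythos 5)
Your proof is correct, but it takes a genuinely different route from the paper's. The paper gives no argument of its own for this lemma: it cites \cite[Lemma 2.1]{[KV]}, and the surrounding discussion (together with Lemma~\ref{dist_hens}, which transports $\dist(a,K)$ and the defect to the henselization) indicates that the intended proof passes to $(K^h,v)$ and invokes the henselian version of the statement. You instead give a self-contained, elementary argument: the defectless hypothesis $[L:K]=(vL:vK)[Lv:Kv]$ is exactly what upgrades the standard system $\{\pi_i b_j\}$ from a $K$-linearly independent set (which it always is, by the coset/residue argument you sketch -- this is the usual proof of the fundamental inequality) to a valuation basis of $L|K$, and then the maximum of $v(a-K)$ is read off as the minimum value of the non-constant terms in the expansion of $a$, attained at $c=k_{11}$. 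Your Step~1 computation is sound: grouping the summands by the coset of $v(\pi_i)$ reduces the claim to the linear independence of the residues $\bar b_j$ over $Kv$ within each group and to the distinctness of values across groups. What your approach buys is independence from the henselization machinery and from the external reference; what the paper's route buys is brevity and consistency with how distances are handled elsewhere (everything is funneled through $K^h$ and $K^r$). Two small points worth making explicit: the statement's ``$a\in K$'' is evidently a typo for $a\in L$, which you silently and correctly repair; and your argument tacitly assumes $[L:K]<\infty$, which is implicit in the paper's definition of ``defectless'' -- for an infinite algebraic defectless extension one would first replace $L$ by $K(a)$, which is again defectless by multiplicativity of the defect in towers of unibranched extensions.
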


We will need a version of Lemma~\ref{imm_deg_p} that also works for extensions that 
are not assumed to be unibranched.
\begin{lemma}                                   \label{ude}
Assume that $(K(a)|K,v)$ is an extension of degree at most $p=\chara Kv$
of rank $1$ valued fields  such that $v(a-K)$ has no maximal element but is 
bounded from above in $vK$. Then the
extension $(K(a)|K,v)$ is a unibranched defect extension.
\end{lemma}
\begin{proof}
Take a henselization $(K^h,v)$ and consider the extension $(K^h(a)|K^h,v)$ which again 
is of degree at most $p$. Take any 
$b\in K^h$. Since $K^h(a)|K$ is algebraic, we know that $vK^h(a)$ lies in the 
divisible hull $\widetilde{vK}$ of $vK$ and thus there is some $\alpha\in vK$ such that 
$\alpha>v(a-b)$. Since $(K,v)$ is of rank $1$ by assumption, $(K,v)$ lies dense in 
$(K^h,v)$ (cf.\ \cite[32.11 and 32.18]{[W]}). Therefore, there is some $c\in K$ such 
that $v(b-c)\geq \alpha>v(a-b)$, so that $v(a-b)=v(a-c)\in v(a-K)$. This shows that 
$v(a-K^h)=v(a-K)$. Hence also  $v(a-K^h)$ has no maximal element and is 
bounded from above in $vK=vK^h$. Thus in particular $(K^h(a)|K^h,v)$ is a nontrivial
extension. 

Suppose the extension $(K^h(a)|K^h,v)$ were defectless. Then by 
Lemma~\ref{dist_defectless} the set $v(a-K^h)$ would admit a maximal element, a
contradiction. This shows that $(K^h(a)|K^h,v)$ has nontrivial defect $d(K^h(a)|K^h,v)$. 
By the Lemma of Ostrowski, 
\[
[K^h(a):K^h]= d(K^h(a)|K^h,v)(vK^h(a):vK^h)[K^h(a)v:K^hv],
\]
where $d(K^h(a)|K^h,v)=p^n$ for some $n\geq 1$. Since $[K^h(a):K^h]\leq p$, we deduce that
\[
[K^h(a):K^h]=p =d(K^h(a)|K^h,v).
\]
Hence $K(a)|K$ is linearly disjoint from $K^h|K$ and thus it is a unibranched extension 
(cf. Lemma~2.1 of~\cite{[BK2]}). Moreover, from Lemma~\ref{dist_hens} we deduce that 
$(K(a)|K,v)$ is a defect extension. 
\end{proof}

\pars
The next lemma follows from \cite[Lemma~8]{[Ka]} and \cite[Lemma~5.2]{[KV]}. Note that if 
$(K(a)|K,v)$ is an extension such that $v(a-K)$ has no maximal element, then by the proof 
of \cite[Theorem 1]{[Ka]}, $a$ is limit of a pseudo Cauchy sequence in $(K,v)$ without 
limit in $K$, and by \cite[part a) of Lemma~4.1]{[KV]} its approximation type over 
$(K,v)$ is immediate. We use the Taylor expansion
\begin{equation}                             \label{Taylorexp}
f(X) = \sum_{i=0}^{n} \partial_i f(c) (X-c)^i
\end{equation}
where $\partial_i f$ denotes the $i$-th \bfind{Hasse-Schmidt derivative} of $f$.
\begin{lemma}                                \label{Kap}
Take a nontrivial extension $(K(a)|K,v)$ of degree $p$. Assume that $v(a-K)$ has no
maximal element. Then for every nonconstant polynomial $f\in K[X]$ of degree $<p$ 
there is some $\gamma\in v(a-K)$ such that for all $c\in K$ with $v(a-c)\geq \gamma$ 
and all $i$ with $1\leq i\leq\deg f$, we have:
\n
the values $v\partial_i f(c)$ are fixed, equal to $v\partial_i f(a)$, \n
the values $v\partial_i f(c)+i\cdot v(a-c)$ are pairwise distinct,
\begin{eqnarray}                                   
v\partial_1 f(c) + v(a-c) &<& v\partial_i f(c)+i\cdot v(a-c) 
\>\mbox{ whenever }i\ne 1, \label{xf-}\\    
v(f(a)-f(c)) &=& v\partial_1 f(c) + v(a-c)\>,\>\mbox{ and} \label{Kapvfi} \\
\dist(f(a),K) &=& v\partial_1 f(c) + \dist(a,K)\>. \label{Kapvfi2}
\end{eqnarray}
\end{lemma}

The following is Lemma 2.4 of~\cite{[Ku6]}.
\begin{lemma}               \label{defc}
Take a valued field $(K,v)$, a finite extension $(L|K,v)$ and a
coarsening $w$ of $v$ on $L$. If $(K,v)$ is henselian, then so is
$(K,w)$. If $(L|K,v)$ is defectless, then so is $(L|K,w)$.
\end{lemma}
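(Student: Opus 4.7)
My plan is to factor $v=w\circ\ovl{w}$, where $\ovl{w}$ denotes the valuation induced by $v$ on the $w$-residue field (on both $K$ and $L$), and to derive both assertions from the interaction of this decomposition with the extension theory of valuations and with the fundamental equality (\ref{feuniq}).

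For the first assertion, I will use the characterization that $(K,v)$ is henselian if and only if $v$ extends uniquely to every algebraic extension of $K$. Given any algebraic extension $F|K$, an extension of $v$ to $F$ is determined by a pair $(w',\ovl{w}')$ consisting of an extension $w'$ of $w$ to $F$ and an extension $\ovl{w}'$ of $\ovl{w}$ to the $w'$-residue field $Fw'$. If $w$ admitted two distinct extensions $w_1',w_2'$ to $F$, then each $w_i'$ would admit at least one compatible extension of $\ovl{w}$ to $Fw_i'$ by general valuation theory, yielding at least two extensions of $v$ to $F$ and contradicting the henselianness of $(K,v)$. Hence $w$ extends uniquely to every algebraic extension of $K$, and $(K,w)$ is henselian.

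For the second assertion, I first apply the same uniqueness argument with $F=L$ to conclude that $(L|K,w)$ is unibranched, and analogously that $(Lw|Kw,\ovl{w})$ is unibranched; this ensures that the relevant defects are defined. Then I combine the multiplicativity of ramification indices
\[
(vL:vK)\>=\>(wL:wK)\cdot(\ovl{w}Lw:\ovl{w}Kw)
\]
with the identification $Lv=(Lw)\ovl{w}$ of residue fields. Substituting these into the fundamental equality (\ref{feuniq}) for $(L|K,v)$ and comparing with the fundamental equalities for $(L|K,w)$ and $(Lw|Kw,\ovl{w})$ produces the defect multiplicativity
\[
d(L|K,v)\>=\>d(L|K,w)\cdot d(Lw|Kw,\ovl{w})\>.
\]
Since each defect is a power of the characteristic exponent $\tilde p$ and hence a positive integer, the hypothesis $d(L|K,v)=1$ forces $d(L|K,w)=1$, proving that $(L|K,w)$ is defectless.

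I do not anticipate a serious obstacle here. The argument is essentially the bookkeeping of how extensions and the fundamental equality behave under the coarsening $v=w\circ\ovl{w}$, together with the classical characterization of henselian fields by unique extension of the valuation. The only point requiring some care is verifying the unibranched property of the intermediate extension $(Lw|Kw,\ovl{w})$, which again falls out of the bijection between extensions of $v$ and compatible pairs of extensions of $w$ and $\ovl{w}$.
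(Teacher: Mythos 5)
Your proof is correct: the decomposition $v=w\circ\ovl{w}$, the bijection between extensions of $v$ and compatible pairs of extensions of $w$ and $\ovl{w}$, and the resulting multiplicativity $d(L|K,v)=d(L|K,w)\cdot d(Lw|Kw,\ovl{w})$ of defects (each a positive integer) is exactly the standard argument. The paper itself gives no proof but cites Lemma~2.4 of \cite{[Ku6]}, whose proof proceeds along these same lines, so there is nothing substantive to contrast.
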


\subsection{The absolute ramification field}
\begin{proposition}                           \label{K(a)K^r(a)}
Take an immediate unibranched extension $(K(a)|K,v)$. Extend $v$ to the algebraic 
closure of $K$ and let $(K^h,v)$ be the henselization and $(K^r,v)$ the absolute
ramification field of $(K,v)$ with respect to this extension. Then the extension
$(K^r(a)|K^r,v)$ is immediate with
\begin{eqnarray}
[K^r(a):K^r] &=& [K^h(a):K^h] \>=\> [K(a):K]\>,   \label{degKK^r}  \\
d(K^r(a)|K^r,v) &=& d(K^h(a)|K^h,v) \>=\> d(K(a)|K,v)\>,   \label{defKK^r}\\
\dist(a,K^r) &=& \dist(a,K^h) \>=\> \dist(a,K)\>.  \label{distKK^r}
\end{eqnarray}
If $(N|K,v)$ is any subextension of $(K^r|K,v)$, then $[N(a):N]=[K(a):K]$ and
\begin{equation}                       \label{distKN}
d(N(a)|N,v)\>=\>d(K(a)|K,v)\;\mbox{ and }\;\dist(a,N)\>=\>\dist(a,K)\>.
\end{equation}
\end{proposition}
\begin{proof}
Since $(K(a)|K,v)$ is a unibranched extension, we know from Lemma~\ref{dist_hens} 
that $[K^h(a):K^h]=[K(a):K]$ as well as $d(K^h(a)|K^h,v)=d(K(a)|K,v)$ and
$\dist(a,K^h)=\dist(a,K)$. Since $(K(a)|K,v)$ is also immediate,
\[
[K^h(a):K^h]\>=\>[K(a):K] \>=\> d(K(a)|K,v) \>=\> d(K^h(a)|K^h,v)\>,
\]
showing that also $(K^h(a)|K^h,v)$ is immediate.

Further, $(K^r|K^h,v)$ is tame and hence the union of its finite defectless 
subextensions. Thus by Lemma~\ref{ivd}, $(K^r(a)|K^r,v)$ is immediate with 
$[K^r(a):K^r]=[K^h(a):K^h]$ and $\dist(a,K^r)=\dist(a,K^h)$.
By Proposition~\ref{tamedef}, $d(K^r(a)|K^r,v) = d(K^h(a)|K^h,v)$.

\parm
Finally, if $(N|K,v)$ is a subextension of $(K^r|K,v)$, then $N^r=K^r$. Hence by 
(\ref{degKK^r}), $[N(a):N]=[N^r(a):N^r]=[K^r(a):K^r]=[K(a):K]$,
by (\ref{defKK^r}), $d(N(a)|N,v)=d(N^r(a)|N^r,v)=d(K^r(a)|K^r,v)=d(K(a)|K,v)$,
and by (\ref{distKK^r}), $\dist(a,N)=\dist(a,N^r)=\dist(a,K^r)=\dist(a,K)$.
\end{proof}

\pars
For the proof of the following results, see Lemma 2.9 of~\cite{[Ku6]}.
\begin{lemma}			        	\label{ramfield}
Take any valued field $(K,v)$ and let $K^h$ and $K^r$ be its henselization and its
absolute ramification field with respect to any extension of $v$ to the algebraic 
closure of $K$. If $\chara Kv=0$, then $K^r$ is algebraically
closed. If $\chara Kv=p>0$, then every finite extension of $K^r$ is a tower of normal
extensions of degree~$p$. Further, if $L|K$ is a finite extension, then there is
already a finite tame extension $N$ of $K^h$ such that $L.N|N$ is such a tower.
\end{lemma}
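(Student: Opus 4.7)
The plan is to exploit the standard fact from ramification theory that $\Gal(K\sep|K^r)$ is a pro-$p$ group when $p=\chara Kv>0$, and is trivial when $\chara Kv=0$. If $\chara Kv=0$, then $\chara K=0$ as well (since $\chara K\in\{0,\chara Kv\}$), so $K\sep=\tilde K$; triviality of $\Gal(K\sep|K^r)$ then forces $K^r=\tilde K$, yielding the first assertion.

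For the second assertion, take a finite extension $M|K^r$, let $M^s$ be its maximal separable subextension over $K^r$, and let $\widetilde{M^s}|K^r$ be the normal closure, which sits inside $K\sep$. Its Galois group is a finite quotient of the pro-$p$ group $\Gal(K\sep|K^r)$, hence a finite $p$-group. The standard fact that every subgroup of a finite $p$-group is subnormal via a chain of subgroups each normal of index $p$ in its predecessor---a consequence of nilpotence, i.e., every proper subgroup lies properly in its normalizer, so its normalizer quotient contains a subgroup of order $p$---would then produce a tower $K^r\subset F_1\subset\ldots\subset F_s=M^s$ with each step Galois of degree $p$. The remaining extension $M|M^s$ is finite purely inseparable in characteristic $p$, hence splits as a tower of degree-$p$ purely inseparable, automatically normal, extensions. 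Concatenating yields the desired tower.

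For the third assertion, set $E:=L.K^h$ (a finite extension of $K^h$), let $E_0$ be the maximal separable subextension of $E|K^h$, and let $\tilde E_0$ be the Galois closure of $E_0$ over $K^h$, with Galois group $G$. Take a Sylow $p$-subgroup $P\leq G$ and set $N:=(\tilde E_0)^P$, so $[N:K^h]=[G:P]$ is coprime to $p$. The principal obstacle is to verify that $N|K^h$ is actually tame, not merely of degree prime to $p$: henselianness of $K^h$ forces the defect of $N|K^h$, a power of $p$ dividing $[N:K^h]$, to equal $1$, which gives (TE3) and forces both the ramification index and the residue degree to divide $[N:K^h]$, hence to be coprime to $p$; this yields (TE1) and, since a residue extension of degree prime to $p=\chara K^hv$ can contain no purely inseparable part, also (TE2). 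Then $\tilde E_0|N$ is Galois with group the $p$-group $P$, and the composite $E_0.N$ corresponds to a subgroup $H\leq P$; the subnormality argument above provides a tower $N\subset N_1\subset\ldots\subset N_t=E_0.N$ with each step Galois of degree $p$. Finally, $L.N=E.N$ is a finite purely inseparable extension of $E_0.N$ (since $E|E_0$ is), which further decomposes as a tower of degree-$p$ purely inseparable, automatically normal, extensions. Splicing the two towers produces the required decomposition of $L.N|N$.
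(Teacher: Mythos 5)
Your proof is correct and follows the expected route: the paper does not reprove this lemma (it refers to Lemma~2.9 of \cite{[Ku6]}) but explicitly notes that the proof rests on the fact that $K\sep|K^r$ is a $p$-extension, which is exactly your starting point, combined with subnormal series of index-$p$ steps in finite $p$-groups, splitting off the purely inseparable part as a tower of normal degree-$p$ extensions, and taking the fixed field of a Sylow $p$-subgroup to obtain the finite tame extension $N$ of $K^h$. The only cosmetic point is your verification of (TE2): rather than arguing via the absence of a ``purely inseparable part'', it is cleaner to note that the inseparability degree of $Nv|K^hv$ is a power of $p$ dividing $[N:K^h]$, hence equal to $1$.
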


The proof of this lemma uses the fact that if $\chara Kv=p>0$, then $K\sep|K^r$ is a 
$p$-extension. From this we can also conclude:
\begin{corollary}                                \label{Krnirou}
Every absolute ramification field contains all $p$-th roots of unity.
\end{corollary}

\pars 
Finally, we will need the following fact:
\begin{lemma}                                  \label{Krw|Kwsep}
Let $(K^r,v)$ be the absolute ramification field of $(K,v)$, and assume that 
$v=w\circ \ovl{w}$. Then the extension $K^rw|Kw$ is separable.
\end{lemma}
\begin{proof}
For a detailed proof, see the chapter on ramification theory in \cite{[K6]}. For the
convenience of the reader, we give 
here a sketch of the proof. We use several facts from ramification theory. The 
assertion is trivial if $\chara Kv=0$, so we let $\chara Kv=p>0$.

\sn
i) If $(K^h,v)$ is the henselization of $(K,v)$, then $(K^hw,\ovl{w})$ is the
henselization of $(Kw,\ovl{w})$. In particular, $K^hw|Kw$ is separable.
\sn
ii) If $(L_1|L_2,w\circ\ovl{w})$ is a finite defectless extension, then so are 
$(L_1|L_2,w)$ and $(L_1w|L_2w,\ovl{w})$. 
\sn
iii) Since $(K^r|K^h,v)$ is a tame extension, also $(K^rw|K^hw,\ovl{w})$ is a 
tame extension. Indeed, if $(L|K^h,v)$ is a finite subextension, then $p$ does not 
divide $(vL:vK^h)$ and hence also not $(\ovl{w}(Lw):\ovl{w}(K^hw))$, the extension
$Lv|K^hv = (Lw)\ovl{w}|(K^hw)\ovl{w}$ is separable, and $(L|K^h,v)$ is defectless, 
which by ii) implies that $(Lw|K^hw,\ovl{w})$ is defectless. 
\pars
Now as $(K^rw|K^hw,\ovl{w})$ is a tame extension, $K^rw|K^hw$ is separable, and in view of i) we obtain that $K^rw|Kw$ is separable.
\end{proof}

%
%
\subsection{1-units and $p$-th roots in valued fields of mixed characteristic}   \label{Sect1unit}
\mbox{ }\sn
Throughout this section, $(K,v)$ will be a valued field of characteristic zero and
residue characteristic $p>0$, with valuation ring $\cO$ and valuation ideal $\cM$. We
assume that $v$ is extended to the algebraic closure $\tilde{K}$ of $K$.
We will need a few easy observations about the relation of congruences and powers 
of elements.
\begin{lemma}                             \label{congp}
1) If $b_1,\ldots,b_n\in\cO$, then
\begin{equation}                           \label{modp}
(b_1+\ldots+b_n)^p\>\equiv\> b_1^p+\ldots+b_n^p \mod p\cO\>.
\end{equation}
2) Take elements $b_1,\ldots,b_n\in K$ of values $\geq - \frac{vp}{p}$. Then
\[
(b_1+\cdots + b_n)^p \>\equiv\> b_1^p+\cdots + b_n^p \mod \cO\>.
\]
3) Take $\eta\in\tilde{K}$ such that $\eta^p\in\cO$. Then for every $c\in K$ such 
that $v(\eta-c)\geq\frac{vp}{p}$ we have that $\eta^p\equiv c^p \mod p\cO$.
\end{lemma}
\begin{proof}
1): We have:
\begin{equation}                                         \label{b1b2}
(b_1+b_2)^p\>=\>b_1^p+\sum_{i=1}^{p-1} \binom{p}{i} b_1^{p-i} b_2^i +b_2^p\>.
\end{equation}
Since the binomial coefficients under the sum are all divisible by $p$ and since
$b_1,b_2\in\cO$, all summands on the right hand side for $1\leq i\leq p-1$ lie
in $p\cO$, which proves our assertion in the case of $n=2$. The general case follows 
by induction on $n$.

\sn
2): If $vb_1\geq-\frac{vp}{p}$ and $vb_2\geq-\frac{vp}{p}$, then $vb_1^{p-i} b_2^i
\geq -vp$ for $1\leq i\le p-1$, so all summands in the sum on the right hand side of
(\ref{b1b2}) have non-negative value. As for part 1), the
assertion now follows by induction on $n$.
\sn
3): For $c\in K$ with $v(\eta-c)>0$ we have that $vc\geq 0$ and, by part 1):
\[
(\eta-c)^p\>\equiv\>\eta^p-c^p \mod p\cO_{K(\eta)}\>.
\]
If $v(\eta-c)\geq \frac{vp}{p}$, then $v(\eta-c)^p\geq vp$, i.e., $\eta^p-c^p\equiv 
(\eta-c)^p\equiv 0$ modulo $p\cO_{K(\eta)}\cap K=p\cO$.
\end{proof}

\begin{lemma}                                               \label{eta-c}
Take $\eta\in\tilde{K}$ such that $\eta^p\in K$ and $v\eta=0$. Then for $c\in K$ 
such that $v(\eta-c)>0$, $v(\eta-c)<\frac{1}{p-1}vp$ holds if and only if $v(\eta^p
-c^p)<\frac{p}{p-1}vp$, and if this is the case, then $v(\eta^p-c^p)=pv(\eta-c)$. 
If $v(\eta-c)>\frac{1}{p-1}vp$, then $v(\eta^p-c^p)= vp+v(\eta-c)$.
\end{lemma}
\begin{proof}
Take any $c\in K$ such that $0<v(\eta-c)$. Then $vc=v\eta=0$. 
We have: 
\[
\eta^p \>=\> (\eta-c+c)^p\>=\>(\eta-c)^p+\sum_{i=1}^{p-1} \binom{p}{i} 
(\eta-c)^i c^{p-i} \,+\, c^p\>.
\]
Since $vc=0$ and the binomial coefficients under the sum all have value $vp$, the 
unique summand with the smallest value is $p(\eta-c)c^{p-1}$. Therefore,
\begin{equation}                          \label{1u1}
v(\eta^p-c^p)\>\geq\>\min\{v(\eta-c)^p,vp(\eta-c)\} \>=\> 
\min\{pv(\eta-c),vp+v(\eta-c)\}\>,
\end{equation}
with equality holding if $pv(\eta-c)\ne vp+v(\eta-c)$. We observe that
\begin{equation}                          \label{1u2}
v(\eta-c)<\frac{vp}{p-1}\>\Longleftrightarrow\> pv(\eta-c)<vp+v(\eta-c)\>,
\end{equation}
and the same holds for ``$>$'' in place of ``$<$''.
Assume that $v(\eta-c)<\frac{vp}{p-1}$. Then 
\[
v(\eta^p-c^p)\>=\> pv(\eta-c)\><\>\frac{p}{p-1}vp
\]
by (\ref{1u2}) and (\ref{1u1}).
Now assume that $v(\eta-c)\geq \frac{1}{p-1}vp$. Then by (\ref{1u2}), $pv(\eta-c) \geq
vp+v(\eta-c)$, and (\ref{1u1}) yields that
\[
v(\eta^p-c^p)\>\geq\> vp+v(\eta-c) \>\geq\> vp+\frac{1}{p-1}vp \>=\> \frac{p}{p-1}vp\>.
\]
Finally, if $v(\eta-c)>\frac{1}{p-1}vp$, then from (\ref{1u2}) and (\ref{1u1}) 
we conclude: 
\[
v(\eta^p-c^p)\>=\> vp+v(\eta-c)\>.
\]
\end{proof}

A \bfind{1-unit} in $(K,v)$ is an element of the form $u=1+b$ with $b\in\cM$; in 
other words, $u$ is a unit in $\cO$ with residue 1. We will call the value $v(u-1)$ 
the \bfind{level} of the 1-unit $u$. Taking $\eta$ to be a 1-unit $u$ in
Lemma~\ref{eta-c}, we obtain:
\begin{corollary}                                           
Assume that $u$ is a $1$-unit. Then the level of $u$ is smaller than $\frac{1}{p-1}vp$ if 
and only if the level of $u^p$ is smaller than $\frac{p}{p-1}vp$, and if this is the case, 
then $v(u^p-1)=pv(u-1)$.
\end{corollary}

\begin{lemma}                                     \label{apKras}
Take $\eta\in\tilde{K}$ such that $\eta^p\in K$. If there is some $c\in K$ such that
\begin{equation}                                     \label{Kras}
v(\eta-c) \> >\> v\eta + \frac {vp}{p-1} \>,
\end{equation}
then $\eta$ lies in the henselization of $(K,v)$ within $(\tilde{K},v)$.
\end{lemma}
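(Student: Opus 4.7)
My plan is to apply Krasner's lemma after passing to the henselization $(K^h,v)$ of $(K,v)$. We may assume $\eta\notin K$, since otherwise the conclusion is trivial. The starting observation is that since $\eta^p\in K$ and $p$ is prime, the minimal polynomial of $\eta$ over $K$ divides $X^p-\eta^p$ and has degree $1$ or $p$; under the assumption $\eta\notin K$ the degree must equal $p$, so the $K$-conjugates of $\eta$ are precisely the elements $\zeta^i\eta$ for $i=0,1,\ldots,p-1$, where $\zeta$ denotes a primitive $p$-th root of unity in $\tilde K$.

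Next I would compute the distances from $\eta$ to its conjugates. For each $i$ with $1\leq i\leq p-1$ the element $\zeta^i$ is again a primitive $p$-th root of unity, so formula (\ref{v_ep}) gives $v(\zeta^i-1)=\frac{vp}{p-1}$, whence $v(\eta-\zeta^i\eta)=v\eta+\frac{vp}{p-1}$ for every such $i$. Combined with the hypothesis (\ref{Kras}), this yields
\[
v(\eta-c)\;>\;v\eta+\frac{vp}{p-1}\;=\;v(\eta-\eta')
\]
for every $K$-conjugate $\eta'\neq\eta$ of $\eta$. Since every $K^h$-conjugate of $\eta$ is also a $K$-conjugate, the same inequality persists when $\eta'$ ranges over the $K^h$-conjugates of $\eta$ different from $\eta$.

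Krasner's lemma, applied inside the henselian field $(K^h,v)$ where the extension of $v$ to $\tilde K$ is unique, then forces $K^h(\eta)\subseteq K^h(c)=K^h$, so $\eta\in K^h$ as required. The only delicate point is the appeal to (\ref{v_ep}), which is stated for henselian fields: this is resolved by noting that $\zeta$ and its powers lie in the absolute ramification field $K^r$ (which contains all $p$-th roots of unity), and $K^r$ is henselian with $v$ extended uniquely, so the values $v(\zeta^i-1)$ computed under our fixed extension of $v$ to $\tilde K$ agree with those given by (\ref{v_ep}).
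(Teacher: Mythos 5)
Your argument is correct and coincides with the paper's own proof: both compute that the roots of $X^p-\eta^p$ are pairwise at distance $v\eta+\frac{vp}{p-1}$ using $v(\zeta_p^i-1)=\frac{vp}{p-1}$, and then invoke Krasner's Lemma over $K^h=K(c)^h$. The extra remark justifying the applicability of (\ref{v_ep}) via $K^r$ is a harmless refinement of the same route.
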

\begin{proof}
If $\eta\in K$, then there is nothing to show, so let us assume that $\eta\notin K$. 
We fix a primitive $p$-th root of unity $\zeta_p\,$.
Every root of $X^p-\eta^p$ is of the form $\eta \zeta_p^i$ with $0\leq i\leq p-1$. For 
$0\leq i\ne j\leq p-1$,
\[
v(\eta \zeta_p^i -\eta \zeta_p^j) \>=\> v\eta+jv\zeta_p+v(\zeta_p^{i-j}-1) \>=\> v\eta+ 
\frac {vp}{p-1}\>,
\]
where the last equality holds since $v\zeta_p=0$ and 
\begin{eqnarray}                                          \label{v_ep}
v(\zeta-1)\>=\>\frac{vp}{p-1}
\end{eqnarray}
for every primitive $p$-th root of unity $\zeta$ (see e.g.\ the proof of \cite[Lemma~2.9]
{[Ku8]}). Hence if (\ref{Kras}) holds, then it follows from Krasner's Lemma that $\eta\in
K(c)^h=K^h$, where $K^h$ denotes the henselization of $(K,v)$ within $(\tilde{K},v)$.
\end{proof}

For our work with $1$-units, we will need a result that is Lemma~2.9 of~\cite{[Ku8]}.
\begin{lemma}                                                  \label{C}
A henselian field of characteristic 0 and residue characteristic $p>0$ contains an element 
$C$ such that $C^{p-1}=-p$ if and only if it contains a primitive $p$-th root $\zeta_p$ of
unity.
\end{lemma}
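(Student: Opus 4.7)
The plan is to prove both implications via Hensel's lemma, which applies since $(K,v)$ is henselian. In the direction $\zeta_p \Rightarrow C$, the key idea is the standard factorization of $p$ coming from evaluating the cyclotomic polynomial $\Phi_p$ at $1$, which presents $-p$ as $(1-\zeta_p)^{p-1}$ times a unit of residue $1$; Hensel then extracts a $(p-1)$-th root of that unit. In the reverse direction, the existence of a primitive $p$-th root of unity is encoded as the existence of a Hensel root of the auxiliary polynomial $h(Y) := \bigl((1+CY)^p - 1\bigr)/CY$.

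For ($\Leftarrow$), I would factor each $1-\zeta_p^i = (1-\zeta_p)(1+\zeta_p+\cdots+\zeta_p^{i-1})$ to get
\[
p \>=\> u(1-\zeta_p)^{p-1}\>, \qquad u \>:=\> \prod_{i=1}^{p-1}(1+\zeta_p+\cdots+\zeta_p^{i-1})\>.
\]
Since $\chara Kv = p$, the residue of $\zeta_p$ equals $1$ (as $\zeta_p$ is a root of $X^p-1 = (X-1)^p$ in $Kv[X]$), so the $i$-th factor of $u$ reduces to $i$ and Wilson's theorem yields $uv = (p-1)! = -1$ in $Kv$. Hence $-u \equiv 1 \pmod{\cM_K}$, and since $Y^{p-1}+u$ has positive value and unit derivative $p-1$ at $Y=1$, Hensel supplies $y \in \cO_K$ with $y^{p-1} = -u$. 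Setting $C := y(1-\zeta_p)$ then gives $C^{p-1} = -u(1-\zeta_p)^{p-1} = -p$.

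For ($\Rightarrow$), expand
\[
h(Y) \>=\> p + \binom{p}{2}CY + \cdots + \binom{p}{p-1}C^{p-2}Y^{p-2} + C^{p-1}Y^{p-1}\>,
\]
and observe that any root $y \in K$ of $h$ with nonzero residue produces $\zeta_p := 1+Cy$. Using $v\binom{p}{j} = vp$ for $1 \le j \le p-1$ together with $v(C^i) = i\,vp/(p-1)$, the rescaling $-h(Y)/p$ is monic of degree $p-1$ with constant term $-1$ and all intermediate coefficients in $\cM_K$; its reduction is $Y^{p-1}-1 = \prod_{i=1}^{p-1}(Y-i) \in Kv[Y]$. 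Hensel lifts any of its $p-1$ simple roots to a root $y \in \cO_K^\times$ of $h$, and then $1+Cy$ is a nontrivial, hence primitive, $p$-th root of unity in $K$.

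The only genuinely substantive step is the use of Wilson's theorem to see $uv = -1$, since this is exactly what guarantees that Hensel produces a $(p-1)$-th root of $-u$ inside $K$; the remainder is routine bookkeeping with binomial valuations and Newton-polygon-style estimates. The degenerate case $p=2$ collapses the whole statement to $-2 \in K \Leftrightarrow -1 \in K$, and both formulas above remain valid in that case.
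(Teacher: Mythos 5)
Your proof is correct and complete in both directions: the factorization $p=\prod_{i=1}^{p-1}(1-\zeta_p^i)=u(1-\zeta_p)^{p-1}$ with $uv=(p-1)!=-1$ by Wilson's theorem, followed by a Hensel lift of a $(p-1)$-th root of $-u$, and conversely the Hensel splitting of $-h(Y)/p\equiv Y^{p-1}-1$ are all sound, including the degenerate case $p=2$. The paper does not prove this lemma itself (it only cites it as Lemma~14 of \cite{[Ku8]}), but your argument is the standard one and your polynomial $h$ is exactly the substitution $X=CY+1$ that the paper develops in Section~\ref{Sect1unit} (the polynomial $f_\eta$ of (\ref{trpol}) specialized to $\eta^p=1$, cf.\ Lemma~\ref{1-units}), so your route coincides with the machinery the paper relies on.
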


The element $C$ satisfies:
\begin{equation}                         \label{C^p=-pC}
C^p=-pC \;\; \textrm{ and } \;\; vC=\frac{vp}{p-1}\>.
\end{equation}

The following construction will play an important role in the proof of 
Theorem~\ref{algext}.
Take a 1-unit $\eta\in\tilde{K}$ such that $\eta^p\in K$. Then also $\eta^p$ is a 1-unit.
Assume that $K$ contains an element $C$ as in Lemma~\ref{C}. Consider the substitution 
$X=CY+1$ for the polynomial $X^p-\eta^p$. We then obtain the polynomial
$(CY+1)^p-\eta^p$. Dividing this polynomial by $C^p$ and using the fact that $C^p=-pC$, 
we obtain the polynomial
\begin{equation}                             \label{trpol}
f_\eta(Y)=Y^p+g(Y)-Y-\frac{\eta^p-1}{C^p},
\end{equation}
where
\begin{equation}                              \label{p-part}
g(Y)=\sum_{i=2}^{p-1} \binom{p}{i} C^{i-p}Y^i\>.
\end{equation}

Note that $g(Y)\in\cM_K[Y]$ since $C\in K$ and $vC=\frac{vp}{p-1}$. We see that an element 
$\tilde \eta$ is a root of $X^p-\eta^p$ if and only if the element 
$\frac{\tilde \eta -1}{C}$  is a root of $f_\eta\,$. Thus the roots of
$f_\eta$ are of the form $\frac{\zeta_p^i \eta -1}{C}$ with $0\leq i\leq p-1$.
Set
\begin{equation}                             \label{varthetaeta}
\vartheta_\eta\>:=\>\frac{\eta-1}{C}\>.
\end{equation}
Then $K(\eta)=K(\vartheta_\eta)$, with $f_\eta$ the minimal polynomial of $\vartheta_\eta$ 
over $K$. Modulo $\cM_K[Y]$, the polynomial $f_\eta(Y)$ has the form of an Artin-Schreier
polynomial (see Section~\ref{sectGaldefdegp}). We note that by Lemma~\ref{dist}, 
\[
\dist(\vartheta_\eta,K)\>=\>\dist(\eta,K) - \frac{vp}{p-1}\>.
\]

\pars
The following is an easy consequence of the above.
\begin{lemma}                                                  \label{1-units}
In a henselian field $(K,v)$ of mixed characteristic with residue characteristic $p$
which contains a primitive $p$-th root of unity, every $1$-unit of level strictly 
greater than $\frac{p}{p-1}vp$ is a $p$-th power.
\end{lemma}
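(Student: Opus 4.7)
The plan is to produce an explicit $p$-th root of the given $1$-unit by applying Hensel's Lemma to the transformed polynomial $f_\eta$ constructed just above the statement.

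Let $u$ be a $1$-unit of level $v(u-1)>\frac{p}{p-1}vp$, and fix $\eta\in\tilde K$ with $\eta^p=u$. First I would check that $\eta$ is itself a $1$-unit: since $v\eta^p=0$ we have $v\eta=0$, and the residue $\eta v\in Kv$ satisfies $(\eta v)^p=1$ in a field of characteristic $p$, forcing $\eta v=1$. Since $K$ contains a primitive $p$-th root of unity, Lemma~\ref{C} supplies $C\in K$ with $C^{p-1}=-p$ and $vC=\frac{vp}{p-1}$.

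Next I would feed $\eta^p=u$ into the construction preceding the statement. This produces the polynomial
\[
f_\eta(Y)\>=\>Y^p+g(Y)-Y-\frac{u-1}{C^p}\in K[Y],
\]
with $g(Y)\in\cM_K[Y]$ as in (\ref{p-part}), whose roots are exactly $\frac{\zeta_p^i\eta-1}{C}$ for $0\le i\le p-1$. The key numerical point is that
\[
v\!\left(\frac{u-1}{C^p}\right)\>=\>v(u-1)-\tfrac{p}{p-1}vp\><\>0 \text{ is false; in fact it is }>0
\]
by the hypothesis on the level of $u$. Hence every non-leading coefficient of $f_\eta$ other than that of $Y$ lies in $\cM_K$, so the reduction modulo $\cM_K$ is
\[
\overline{f_\eta}(Y)\>=\>Y^p-Y\>=\>\prod_{i=0}^{p-1}(Y-i),
\]
which splits into $p$ distinct linear factors over the prime field $\F_p\subseteq Kv$.

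Now I would invoke the henselian hypothesis: the simple residue root $\bar 0$ lifts to some $\vartheta\in\cO_K$ with $f_\eta(\vartheta)=0$. By construction $C\vartheta+1$ is then a root of $X^p-u$ lying in $K$, so $u=(C\vartheta+1)^p$ is a $p$-th power in $K$, as required. There is no real obstacle here once the inequality $v(u-1)>\frac{p}{p-1}vp$ is translated into the statement that the constant term of $f_\eta$ lies in $\cM_K$; that is precisely what is needed for the reduction $\overline{f_\eta}=Y^p-Y$ to have the simple root $\bar 0$ which Hensel's Lemma can lift.
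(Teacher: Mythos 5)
Your proposal is correct and follows exactly the paper's own argument: apply the substitution $X=CY+1$ to $X^p-u$, observe that the hypothesis on the level forces the constant term $\frac{u-1}{C^p}$ into $\cM_K$ so that $f_\eta$ reduces to $Y^p-Y$, and lift a root by henselianity. The only difference is that you spell out a few details (that $\eta$ is a $1$-unit, that the residue roots are simple) which the paper leaves implicit.
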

\begin{proof}
By Lemma~\ref{C}, $K$ contains an element $C$ as in that lemma. Take a 1-unit $u\in K$ of 
level strictly greater than $\frac{p}{p-1}vp$. Apply the above transformation to the
polynomial $X^p-u$ with $\eta^p=u$. By our assumption on $u$ we have that $\frac{\eta^p-1}
{C^p}\in\cM_K\,$. Hence $f_\eta(Y)$ is equivalent modulo $\cM_K[Y]$ to $Y^p-Y$,
which splits in the henselian field $K$. Therefore, $\vartheta_\eta\in K$, hence 
$\eta\in K$.
\end{proof}

%
%
\subsection{Higher ramification groups}           \label{secthrg}
\mbox{ }\sn
Take a henselian field $(K,v)$. Assume that $L|K$ is a Galois extension, and let
$G=\Gal(L|K)$ denote its Galois group. For ideals $I$ of ${\cal O}_L$ we consider the
(upper series of) \bfind{higher ramification groups}
\begin{equation}
G_I\>:=\>\left\{\sigma\in G\>\left|\;\> \frac{\sigma b -b}{b}\in I
\mbox{ \ for all }b\in L^\times\right.\right\}
\end{equation}
(see \cite{[ZS2]}, \S12). Note that $G_{\cM_L}$ is the ramification group of 
$(L|K,v)$. For every ideal $I$ of ${\cal O}_L\,$, $G_I$ is a
normal subgroup of $G$ (\cite{[ZS2]} (d) on p.79). The function
\begin{equation}                            \label{eq12}
\varphi:\; I\>\mapsto\>G_I
\end{equation}
preserves $\subseteq$, that is, if $I\subseteq J$, then $G_I\subseteq G_J\,$.
As ${\cal O}_L$ is a valuation ring, the set of its ideals is linearly
ordered by inclusion. This shows that also the higher ramification
groups are linearly ordered by inclusion. Note that in general, $\varphi$ will neither 
be injective, nor surjective as a function to the set of normal subgroups of~$G$.

The function
\begin{equation}                           \label{vIS}
v:\; I\>\mapsto\> \Sigma_I\>:=\> \{vb\mid 0\ne b\in I\}
\end{equation}
is an order preserving bijection from the set of all ideals of ${\cal O}_L$ onto the 
set of all final segments of the positive part $(vL)^{> 0}$  of the value 
group $vL$ (including the final segment $\emptyset$). The set of these final segments 
is again linearly ordered by
inclusion, and the function (\ref{vIS}) is order preserving: $J\subseteq I$ holds 
if and only if $\Sigma_J\subseteq\Sigma_I$ holds. The inverse of the above function 
is the order preserving function
\begin{equation}
\Sigma\>\mapsto\> I_\Sigma\>:=\>\{a\in L\mid va\in \Sigma\}\cup\{0\}\>.
\end{equation}

Now the higher ramification groups can be represented as
\[
G_\Sigma\>:=\>G_{I_\Sigma}\>=\>
\left\{\sigma\in G\>\left|\;\> v\,\frac{\sigma b-b}{b}\in \Sigma\cup\{\infty\}
\mbox{ \ for all }b\in L^\times\right.\right\} \>,
\]
where $\Sigma$ runs through all final segments of $(vL)^{> 0}$.

Like the function (\ref{eq12}), also the function $\Sigma\mapsto G_\Sigma$ is in 
general not injective. We call $\Sigma$ a \bfind{ramification jump} if
\[
\Sigma'\subsetneq\Sigma\>\Rightarrow\> G_{\Sigma'}\subsetneq G_\Sigma\>.
\]
If $\Sigma$ is a ramification jump, then $I_\Sigma$ is called a \bfind{ramification
ideal}.

\pars
Given any ramification group $H\subseteq G$, we define
\begin{equation}
\Sigma_-(H)\;:=\;\bigcap_{G_\Sigma=H}^{}\Sigma
\quad\mbox{\ \ and\ \ }\quad
\Sigma_+(H)\;:=\;\bigcup_{G_\Sigma=H}^{}\Sigma\;.
\end{equation}
and note that arbitrary unions and intersections of final segments of $(vL)^{> 0}$
are again final segments of $(vL)^{> 0}$. From its definition it is obvious that 
$\Sigma_-(H)$ is a ramification jump, $G_{\Sigma_-(H)}=H$, and that
\[
I_-(H)\>:=\> I_{\Sigma_-(H)}
\]
is a ramification ideal. It is generated by the set 
\begin{equation}                     \label{genI}
\left\{\frac{\sigma b -b}{b}\>\left|\;\> \sigma\in H\,,\>b\in L^\times\right.
\right\}\>.
\end{equation}

\parm
In this paper we are particularly interested in the case where $(L|K,v)$ is a Galois
extension of prime degree $p$. Then $G=\Gal(L|K)$ is a cyclic group of order $p$ and 
thus has only one proper subgroup, namely $\{\mbox{\rm id}\}$, and this subgroup is 
equal to $G_\Sigma$ for $\Sigma=\emptyset$. If in this case $G$ itself is the
ramification group of the extension, then there must be a unique ramification jump. 
As we will show in the next section, this ramification jump carries important 
information about the extension $(L|K,v)$.

\bn
%
%
%
\section{Defect extensions of prime degree}                     \label{sectdepd}
We will investigate defect extensions $(L|K,v)$ of prime degree $p$. By what we have
already stated in the Introduction, such extensions are immediate unibranched 
extensions; moreover, $p=\chara Kv>0$. By
Theorem~\ref{CutImm}, for every $a\in L\setminus K$ the set $v(a-K)$
is an initial segment of $vK$ without maximal element, and $\dist(a,K)>va$.

\pars
In the following, we distinguish two cases:
\sn
$\bullet$ \ the equal characteristic case where $\chara K=p$,\n
$\bullet$ \ the mixed characteristic case where $\chara K=0$ and $\chara Kv=p$.
\sn
We fix an extension of $v$ from $L$ to the algebraic closure $\tilde{K}$ of $K$.

\pars

\pars
In a first section, we investigate the set $\Sigma_\sigma$ defined in (\ref{Sigsig}) 
for $\sigma$ in the absolute Galois group $\Gal(K):=\Gal(K\sep|K)$.

%
%
\subsection{The set $\Sigma_\sigma$}   \label{sectSigsig}
\mbox{ }\sn
We start with the following easy but helpful observations. The first is obvious.
\begin{lemma}                               \label{vgen}
Let $(K(a)|K,v)$ be any algebraic extension of valued fields. If
$\sigma\in \Gal(K)$ is such that $\sigma a\ne a$, then
\[
\left\{\left. v\>\frac{\sigma(a-c)-(a-c)}{a-c}
\>\right|\> c\in K\right\}\>=\>
\left\{\left. v\>\frac{\sigma a-a}{a-c}
\>\right|\> c\in K\right\}\>=\> -v(a-K)+v(\sigma a-a)\>.
\]
\end{lemma}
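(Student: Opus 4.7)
The plan is to observe that this lemma is a purely formal computation that uses only the fact that $\sigma$ fixes $K$ elementwise and the standard rules of valuations. There is no serious obstacle.

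First, I would establish the first equality. Since $\sigma \in \Gal(K)$ and $c \in K$, we have $\sigma c = c$, so
\[
\sigma(a-c) - (a-c) \;=\; \sigma a - \sigma c - a + c \;=\; \sigma a - a.
\]
Dividing by $a-c$ (which is nonzero for all $c \in K$ since otherwise $a = c \in K$ would force $\sigma a = a$, contrary to hypothesis) yields
\[
\frac{\sigma(a-c) - (a-c)}{a-c} \;=\; \frac{\sigma a - a}{a - c},
\]
which, upon varying $c$ over $K$, gives the first set equality.

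For the second equality, I would apply the valuation $v$ and use $v(xy^{-1}) = vx - vy$ to get
\[
v\,\frac{\sigma a - a}{a - c} \;=\; v(\sigma a - a) - v(a - c).
\]
As $c$ ranges over $K$, the set $\{v(a-c) \mid c \in K\}$ is by definition $v(a-K)$, so the set of values on the left becomes $v(\sigma a - a) - v(a-K) = -v(a-K) + v(\sigma a - a)$, as required.

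The only thing worth emphasizing is the non-vanishing of $a - c$, which is immediate from the assumption $\sigma a \neq a$ (this is why we do not need $a \notin K$ as an explicit hypothesis: it is forced by the existence of such a $\sigma$). No further machinery—neither immediateness, nor henselianness, nor finiteness of the extension—is needed.
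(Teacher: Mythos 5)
Your proof is correct and follows exactly the paper's own argument: the first equality from $\sigma c = c$, the second from $v\bigl(\frac{\sigma a - a}{a-c}\bigr) = -v(a-c) + v(\sigma a - a)$. The extra remark that $a - c \neq 0$ is forced by $\sigma a \neq a$ is a sensible clarification the paper leaves implicit.
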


\begin{lemma}                               \label{acsaa}
Take a nontrivial immediate unibranched extension $(K(a)|K,v)$. Then the following 
assertions hold.
\sn
1) For each $\sigma\in \Gal(K)$ and $c\in K$,
\[
v(a-c)\><\>v(\sigma a-a)\;.
\]
2) For each $\sigma\in \Gal(K)$ such that $\sigma a\ne a$,
\[
\dist(a,K)\>\leq\> v(\sigma a - a)^-.
\]
\end{lemma}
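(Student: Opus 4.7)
The plan is to prove part~1) directly from Theorem~\ref{CutImm}, and then deduce part~2) as an immediate cut-theoretic consequence of part~1).

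First, for part~1), I would dispose of the trivial case $\sigma a = a$ (where $v(\sigma a - a) = \infty > v(a-c)$, using that the extension is nontrivial and hence $a \notin K$). Assuming $\sigma a \ne a$ and given an arbitrary $c \in K$, the central trick is to pick a better approximation $c' \in K$ of $a$: by Theorem~\ref{CutImm} the set $v(a-K)$ is an initial segment of $vK$ with no maximal element, so some $c' \in K$ satisfies $v(a-c') > v(a-c)$. Since $\sigma$ fixes $c'$, I rewrite
\[
\sigma a - a \>=\> \sigma(a-c') - (a-c'),
\]
and the ultrametric triangle inequality combined with the identity $v(\sigma(a-c')) = v(a-c')$ yields
\[
v(\sigma a - a) \>\geq\> v(a-c') \>>\> v(a-c).
\]

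The main obstacle is justifying $v(\sigma(a-c')) = v(a-c')$, since $\sigma a$ need not lie in $K(a)$ and our fixed extension of $v$ to $\tilde{K}$ is not assumed to be Galois-invariant. I would handle this by applying unibranchedness twice: $\sigma$ restricts to a $K$-isomorphism $K(a) \to K(\sigma a)$, and unibranchedness of $K(a)|K$ carries over to $K(\sigma a)|K$ under this isomorphism. Consequently the pullback of $v|_{K(\sigma a)}$ along $\sigma$ and the valuation $v|_{K(a)}$ are both extensions of $v|_K$ to $K(a)$, and uniqueness forces them to coincide, giving $v(\sigma x) = v(x)$ for every $x \in K(a)$; in particular for $x = a-c'$.

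Once part~1) is in hand, part~2) is essentially automatic. By Lemma~\ref{relation}(2), $v(a-K) \subseteq vK$, so $\dist(a,K)$ is the cut in $\widetilde{vK}$ whose lower cut set is the smallest initial segment containing $v(a-K)$. Part~1) shows that every element of $v(a-K)$ is strictly less than $v(\sigma a - a)$; since $\{t \in \widetilde{vK} \mid t < v(\sigma a - a)\}$ is itself an initial segment of $\widetilde{vK}$, the smallest initial segment containing $v(a-K)$ lies inside it, which is precisely the inequality $\dist(a,K) \leq v(\sigma a - a)^-$.
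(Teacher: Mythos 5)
Your proof is correct and rests on exactly the same two ingredients as the paper's: that $v\circ\sigma$ is another extension of $v$ from $K$ to $K(a)$ and hence equals $v$ there by unibranchedness, and that $v(a-K)$ has no maximal element by Theorem~\ref{CutImm}. The only difference is cosmetic — you run the ultrametric computation directly on $\sigma a - a = \sigma(a-c')-(a-c')$ with a better approximation $c'$, while the paper argues by contradiction from $v(a-c)>v(\sigma a-a)$ — and your explicit write-out of part~2) matches what the paper leaves as "an immediate consequence".
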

\begin{proof}
1): Since the extension is immediate and $a\notin K$, the set $v(a-K)$ has no maximal 
element. Thus it suffices to prove that $v(a-c) \leq v(\sigma a-a)$. If this were not 
true, then for some $\sigma\in\Gal(K)$ and $c\in K$, $v(a-c)> v(\sigma a-a)$. But then,
\[
v\sigma(a-c)\>=\>v(\sigma a-c)\>=\> \min\{v(\sigma a-a)\,,\,v(a-c)\}\>=\> v(\sigma a-a)\><\>v(a-c)\;,
\]
which contradicts our assumption that $K(a)|K$ is a unibranched extension, as
$v\sigma$ is also an extension of $v$ from $K$ to $K(a)$.
\sn
2): This is an immediate consequence of part 1).
\end{proof}

\pars
With the help of Lemma~\ref{Kap}, we prove:
\begin{lemma}                               \label{anypol}
Take a defect extension $(K(a)|K,v)$ of prime degree and any $b\in K(a)^\times$. Then 
for all $\sigma \in \Gal(K)$  such that $\sigma a\ne a$ there is some $c\in K$ such 
that
\begin{equation}                            \label{anypole}
v\,\frac{\sigma b -b}{b} \;>\; -v(a-c)\,+\, v(\sigma a-a)\;.
\end{equation}
\end{lemma}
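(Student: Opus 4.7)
The plan is to reduce to a polynomial identity and then invoke Lemma~\ref{Kap}. Since $[K(a):K]=p$ is prime, every $f \in K(a)^\times$ has a unique expression $f = g(a)$ with $g \in K[X]$ of degree $<p$, so $\sigma f - f = g(\sigma a) - g(a)$. If $g$ is constant the conclusion is vacuous; otherwise $g$ is nonconstant of degree $<p$, which forces $g'(X) \neq 0$, $g(a) \notin K$, and $g'(a) \neq 0$ (any failure would give a nonzero polynomial over $K$ of degree $<p$ vanishing at $a$).

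I would then apply Lemma~\ref{Kap} to $g$. Its hypotheses hold in our setting: immediacy of $(K(a)|K,v)$ gives that $v(a-K)$ has no maximum, and the eventual constancy of $vg(c)$ as $v(a-c)$ grows is a direct consequence of the Taylor expansion around a sufficiently close $c_0 \in K$ (using $g'(a) \neq 0$). Since $k=1$ we must have ${\bf h}=1$, so the lemma provides $\gamma \in v(a-K)$ such that for every $c \in K$ with $v(a-c) \geq \gamma$ and every $i \geq 2$ one has $v\partial_i g(c) = v\partial_i g(a)$ together with the dominance inequality $vg'(c) + v(a-c) < v\partial_i g(c) + i\,v(a-c)$. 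Formula~(\ref{Kapvfi2}) combined with Theorem~\ref{CutImm} yields $\dist(a,K) = \dist(g(a),K) - vg'(a) > vg(a) - vg'(a)$, so since $v(a-K)$ has no largest element I can choose $c \in K$ with $v(a-c) \geq \gamma$ and $v(a-c) > vg(a) - vg'(a)$ simultaneously. For this $c$, the ultrametric inequality forces $vg(c) = vg(a)$, hence $vf = vg(a)$.

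With this $c$ fixed I would expand
\[
g(\sigma a) - g(a) \;=\; \sum_{i=1}^{\deg g} \partial_i g(c)\left[(\sigma a - c)^i - (a-c)^i\right]
\]
and use the identity $x^i - y^i = (x-y)\sum_{j=0}^{i-1} x^j y^{i-1-j}$ with $x = \sigma a - c$, $y = a - c$. By Lemma~\ref{acsaa}, $v(a-c) < v(\sigma a - a)$, so $v(\sigma a - c) = v(a-c)$; each $i \geq 2$ summand therefore has value at least $v\partial_i g(c) + v(\sigma a - a) + (i-1)v(a-c)$, which the dominance from Lemma~\ref{Kap} makes strictly exceed the value $vg'(a) + v(\sigma a - a)$ of the $i=1$ term. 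Hence $v(g(\sigma a) - g(a)) = vg'(a) + v(\sigma a - a)$, and subtracting $vf = vg(a)$ gives
\[
v\,\frac{\sigma f - f}{f} \;=\; vg'(a) - vg(a) + v(\sigma a - a) \;>\; -v(a-c) + v(\sigma a - a)
\]
by our choice $v(a-c) > vg(a) - vg'(a)$.

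The main obstacle is the interplay between Lemma~\ref{Kap}, whose conclusions are phrased for the evaluation at $a$, and the Galois conjugate $\sigma a$ which lies outside its explicit setup; the crucial bridge is the equality $v(\sigma a - c) = v(a-c)$ coming from Lemma~\ref{acsaa}, which transfers the dominance of the linear term from $a$ to $\sigma a$ and thereby couples the Galois action to the local Taylor-expansion picture.
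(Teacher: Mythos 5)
Your proof is correct and follows essentially the same route as the paper's: reduce to a polynomial $g$ of degree $<p$, invoke Lemma~\ref{Kap} with ${\bf h}=1$, use $v(a-c)<v(\sigma a-a)$ from Lemma~\ref{acsaa} to show the linear term dominates the expansion of $g(\sigma a)-g(a)$, and then beat $-v(a-c)+v(\sigma a-a)$ by taking $v(a-c)$ large. The only cosmetic differences are that you expand around $c$ rather than around $a$ and justify $vg(a)<vg'(c)+v(a-c)$ by an explicit distance bound via (\ref{Kapvfi2}) and Theorem~\ref{CutImm}, where the paper instead lets $\gamma$ grow in the inequality $v\partial_1 f(c)+v(a-c)\geq\min\{vf(a),vf(c)\}$.
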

\begin{proof}
As stated already, $(K(a)|K,v)$ is immediate with $[K(a):K]=p=\chara Kv$. 
The element $b\in K(a)^\times$ can be written as $f(a)$ for $f(X)\in K[X]$ of degree
smaller than $p$. By Theorem~\ref{CutImm}, $v(a-K)$ has no maximal element. Hence by
Lemma~\ref{Kap}, we can choose $\gamma\in v(a-K)$ so large that for 
all $c\in K$ with $v(a-c)\geq\gamma$, all values $v\partial_i f(c)$ are
fixed and equal to $v\partial_i f(a)$ whenever $0\leq i<p$, and that (\ref{xf-}) and
(\ref{Kapvfi}) hold. It suffices to restrict our attention to 
those $c\in K$ for which $v(a-c) \geq\gamma$. Then we have that
\begin{equation}                            \label{using}
v\partial_1f(a)(a-c)\>=\>v\partial_1f(c)(a-c)\><\>v\partial_i f(c)(a-c)^i
\>=\>v\partial_i f(a)(a-c)^i
\end{equation}
for all $i>1$. From part 1) of Lemma~\ref{acsaa} we infer that
\[
0\><\>v\left(\frac{\sigma a-a}{a-c}\right)
\><\>v\left(\frac{\sigma a-a}{a-c}\right)^i
\]
for all $i>1$. Using this together with (\ref{using}), we obtain:
\begin{eqnarray*}
v\partial_1 f(a)(\sigma a-a) &=& v\partial_1 f(a)(a-c)\left(\frac{\sigma a-a}{a-c}
\right) \\
&<& v\partial_i f(a)(a-c)^i\left(\frac{\sigma a-a}{a-c}\right)^i\>=\> v\partial_i f(a)
(\sigma a-a)^i\>.
\end{eqnarray*}
It follows that        
\begin{eqnarray*}
v(\sigma f(a) - f(a)) & = & v(f(\sigma a) - f(a))
\>=\>v\left(\sum_{i=1}^{\deg f}\partial_i f(a)(\sigma a-a)^i\right)\\
 & = & v\partial_1 f(a)(\sigma a-a)\>=\>v\partial_1 f(c)\,+\, v(\sigma a-a)\;.
\end{eqnarray*}
Now (\ref{Kapvfi}) shows that
\[
v\partial_1 f(c) + v(a-c) \>=\> v(f(a)-f(c)) \>\geq\> \min\{vf(a),vf(c)\}\>.
\]
The value on the right hand side is fixed, but the value of the left hand side 
increases with $v(a-c)$. Since $v(a-K)$ has no maximal element, we can choose 
$\gamma$ so large that the value on the left hand side is larger
than the one on the right hand side, which can only be the case if $vf(a)=vf(c)$, 
whence $vf(a)< v\partial_1 f(c)+v(a-c)$. Consequently,
\[
v\,\frac{\sigma f(a) -f(a)}{f(a)}\>=\> \partial_1 f(c)\,+\, v(\sigma a-a)
\, -\,vf(a) \>  >\> -v(a-c) \,+\,v(\sigma a-a)\;.
\]
\end{proof}

\begin{theorem}                                        \label{dist_ext_p}
Take a defect extension $\cE= (L|K,v)$ of prime degree. Then for every generator $a\in L$ 
of the extension and every $\sigma \in \Gal(K)$ such that $\sigma a\ne a$,
\begin{equation}                                       \label{ram_gp}
\Sigma_\sigma \>=\> -v(a-K)+v(\sigma a-a)\>,
\end{equation}
and this set is a final segment of $vK^{>0}=\{\alpha\in vK\mid \alpha>0\}$.
\end{theorem}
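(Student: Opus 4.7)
The strategy is to derive the formula in part 1) directly from Lemma~\ref{vgen} and Lemma~\ref{anypol}, combined with the structural fact from Theorem~\ref{CutImm} that $v(a-K)$ is an initial segment of $vK$ without maximal element (which uses that $(L|K,v)$ is immediate). Part 2) will then follow quickly from part 1) together with Lemma~\ref{acsaa}(1).

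For the inclusion $-v(a-K) + v(\sigma a - a) \subseteq \Sigma_\sigma$ in part 1), I would take any $c \in K$, set $f := a - c \in L^\times$, and note that
\[
v\frac{\sigma f - f}{f} \;=\; v\frac{\sigma a - a}{a - c} \;=\; -v(a-c) + v(\sigma a - a);
\]
this is essentially the content of Lemma~\ref{vgen}. For the converse inclusion, take $f \in L^\times$ with $\sigma f \neq f$ and set $\gamma := v((\sigma f - f)/f)$. Lemma~\ref{anypol} yields some $c \in K$ with $\gamma > -v(a-c) + v(\sigma a - a)$; equivalently, $\beta := v(\sigma a - a) - \gamma$ satisfies $\beta < v(a-c) \in v(a-K)$. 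Since $\beta$ lies in $vK$ and $v(a-K)$ is an initial segment of $vK$, we obtain $\beta \in v(a-K)$, so there is $c' \in K$ with $v(a - c') = \beta$. Then $\gamma = -v(a-c') + v(\sigma a - a) \in -v(a-K) + v(\sigma a - a)$, as required.

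For part 2), Lemma~\ref{acsaa}(1) gives $v(a-c) < v(\sigma a - a)$ for every $c \in K$, so every element of $-v(a-K) + v(\sigma a - a)$ is strictly positive; by part 1), $\Sigma_\sigma \subseteq vK^{>0}$. To verify the final-segment property, I would take $\gamma = -v(a-c) + v(\sigma a - a) \in \Sigma_\sigma$ together with any $\delta \in vK$ satisfying $\delta > \gamma$. Setting $\beta := v(\sigma a - a) - \delta \in vK$, the inequality $\beta < v(a-c) \in v(a-K)$ combined with the initial-segment property of $v(a-K)$ produces some $c' \in K$ with $v(a-c') = \beta$, and hence $\delta = -v(a-c') + v(\sigma a - a) \in \Sigma_\sigma$.

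The main technical work has already been absorbed by Lemma~\ref{anypol}; the present theorem is essentially the assembly of that key inequality with the order-theoretic structure of $v(a-K)$. The most delicate point is to keep track of membership in $vK$ of the various values involved, which is ultimately guaranteed by the immediate-extension assumption $vL = vK$ and by tracking that the translations by $v(\sigma a - a)$ on both sides of the claimed equation land in the same group.
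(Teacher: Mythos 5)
Your proof is correct and takes essentially the same route as the paper's: the inclusion $\supseteq$ in (\ref{ram_gp}) via Lemma~\ref{vgen}, the inclusion $\subseteq$ via Lemma~\ref{anypol} combined with the fact that $v(a-K)$ is an initial segment of $vK$ without maximal element, and part 2) from the same order-theoretic structure of $-v(a-K)$. The only (harmless) cosmetic difference is that you derive the strict positivity of the elements of $\Sigma_\sigma$ from Lemma~\ref{acsaa}(1) applied to the formula of part 1), while the paper argues directly from unibranchedness that $v\sigma f=vf$; both yield the same conclusion.
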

\begin{proof}
The inclusion ``$\supseteq$'' in (\ref{ram_gp}) follows from Lemma~\ref{vgen}. To
show the reverse inclusion, we use Lemma~\ref{anypol}. 
Since $v(a-K)$ is an initial segment of $vK$, $-v(a-K)$ is a final segment of $vK$. 
Thus we can infer from (\ref{anypole}) that
\[
v\,\frac{\sigma b -b}{b}\>\in \,-v(a-K)\,+\, v(\sigma a-a)\>.
\]
This proves the inclusion ``$\subseteq$''.
\pars
Since $\cE$ is an immediate unibranched extension, taking $c=0$ in part 1) of 
Lemma~\ref{acsaa} yields that 
$v(\sigma b-b)> vb$ for all $b\in L^\times$, showing that $v \frac{\sigma b-b}{b}
\in vL^{>0}=vK^{>0}$. Since $-v(a-K)$ is a final segment of $vK$,
the same holds for $\Sigma_\sigma=-v(a-K)+v(\sigma a-a)$.
\end{proof}

%
%
%
\subsection{Galois defect extensions of prime degree} \label{sectGaldefdegp}
\mbox{ }\sn
A Galois extension of degree $p$ of a field $K$ of characteristic $p>0$ is an
\bfind{Artin-Schreier extension}, that is, generated by an \bfind{Artin-Schreier
generator} $\vartheta$ which is the root of an \bfind{Artin-Schreier
polynomial} $X^p-X-c$ with $c\in K$. A Galois extension of degree $p$ of a field 
$K$ of characteristic 0 which contains all $p$-th roots of unity is a \bfind{Kummer
extension}, that is, generated by a \bfind{Kummer generator} $\eta$ which satisfies 
$\eta^p\in K$. For these facts, see \cite[Chapter VIII, \S8]{[L]}.

If $(L|K,v)$ is a Galois defect extension of degree $p$ of fields of characteristic 
0, then a Kummer generator of $L|K$ can be chosen to be a $1$-unit. Indeed, choose 
any Kummer generator $\eta$. Since $(L|K,v)$ is immediate, we have that $v\eta\in
vK(\eta)=vK$, so there is $c\in K$ such that $vc=-v\eta$. Then $v\eta c=0$, and since 
$\eta cv\in K(\eta)v=Kv$, there is
$d\in K$ such that $dv=(\eta cv)^{-1}$. Then $v(\eta cd)=0$ and $(\eta cd)v=1$. Hence 
$\eta cd$ is a $1$-unit. Furthermore, $K(\eta cd)=K(\eta)$ and $(\eta cd)^p=
\eta^pc^pd^p\in K$. Thus we can replace $\eta$ by $\eta cd$ and assume from the start
that $\eta$ is a $1$-unit. It follows that also $\eta^p\in K$ is a $1$-unit.

Throughout this article, whenever we speak of ``Artin-Schreier extension'' we refer to
fields of positive characteristic, and with ``Kummer extension'' we refer to fields of
characteristic 0.
\begin{theorem}                                        \label{dist_galois_p}
Take a Galois defect extension $\cE=(L|K,v)$ of prime degree with Galois group $G$.
Then $G$ is the ramification group of $\cE$.
The set $\Sigma_\sigma$ does not depend on the choice of
the generator $\sigma$ of $G$. Writing $\Sigma_{\cE}$ for $\Sigma_\sigma\,$, we have 
that $\Sigma_{\cE}$ is a final segment of $vK^{>0}$ and satisfies
\[
\Sigma_{\cE} \>=\> \Sigma_-(G) \>=\> \Sigma_+(\{\mbox{\rm id}\})\>,
\]
showing that $\Sigma_{\cE}$ is the unique ramification jump of the extension $\cE$.
Further, the ramification ideal $I_\cE=I_-(G)$ is equal to the ideal of $\cO_L$ 
generated by the set
\begin{equation}                        \label{genI-G}
\left\{ \left.\frac{\sigma b-b}{b}\, \right| \, b\in L^{\times} \right\}\>,
\end{equation}
for any generator $\sigma$ of $G$.
\pars
If $(L|K,v)$ is an Artin-Schreier defect extension with any Artin-Schreier generator 
$\vartheta$, then
\begin{equation}                            \label{SigmaAS}
\Sigma_{\cE} \>=\> -v(\vartheta-K)\>.
\end{equation}
If $K$ contains a primitive root of unity and $(L|K,v)$ is a Kummer extension with 
Kummer generator $\eta$ of value $0$, then
\begin{equation}                            \label{SigmaKum}
\Sigma_{\cE} \>=\> \frac{vp}{p-1}\,-\,v(\eta-K)\>.
\end{equation}
\end{theorem}
\begin{proof}
It follows from Theorem~\ref{dist_ext_p} that $G$ is the ramification group of $\cE$
and that $\Sigma_{\sigma}$ is a final segment of $vK^{>0}$.

Assume first that $(L|K,v)$ is an Artin-Schreier defect extension with Artin-Schreier
generator $\vartheta$. Then for every generator $\sigma$ of $G$, we have that 
$\sigma \vartheta = \vartheta + i$ for some $i\in \F_p$ and thus, $v(\sigma \vartheta  
- \vartheta)=v i=0$. Hence equation (\ref{ram_gp}) shows that $\Sigma_\sigma$ does not 
depend on the choice of $\sigma$ and that (\ref{SigmaAS}) holds.

Now assume that $K$ contains a primitive root of unity and $(L|K,v)$ is a Kummer
extension with Kummer generator $\eta$ which is a 1-unit. Then $\sigma \eta - \eta
=(\zeta_p-1)\eta$ for some primitive root of unity $\zeta_p\,$, and by equation 
(\ref{v_ep}),
\begin{equation}                           \label{vsa-a}
v(\sigma \eta - \eta)\>=\> v(\zeta_p-1)+v\eta \>=\> \frac{vp}{p-1}\>.
\end{equation}
Hence by equation~(\ref{ram_gp}), $\Sigma_\sigma$ does not depend on the choice of 
$\sigma$, and (\ref{SigmaKum}) holds.

\pars
If $\Sigma\subsetneq \Sigma_\sigma\,$, then $\sigma\notin G_\Sigma$ and hence
$G_\Sigma=\{\mbox{\rm id}\}$. If $\Sigma_\sigma\subseteq \Sigma$, then $\sigma\in
G_\Sigma$ and hence $G_\Sigma=G$. As $\Sigma_{\cE}$ is the intersection of 
all final segments that contain it, 
\[
\Sigma_{\cE}\>=\>\bigcap_{G_\Sigma=G}\Sigma\>=\>\Sigma_-(G) \>.
\]
Since the sets $-v(\vartheta-K)$ in equation (\ref{SigmaAS}) and $-v(\eta-K)$ in 
equation (\ref{SigmaKum}) have no smallest element, the same is true for $\Sigma_{\cE}$.
Therefore, $\Sigma_{\cE}$ is the union of all final segments properly contained in it, 
whence
\[
\Sigma_{\cE}\>=\>\bigcup_{G_\Sigma=\{{\rm id}\}}\Sigma\>=\>
\Sigma_+(\{\mbox{\rm id}\}) \>.
\]
Finally, from Section~\ref{secthrg} we know that $I_-(G)$ is generated by the set
(\ref{genI}). However, as $\Sigma_{\cE}=\Sigma_\sigma$ for every generator $\sigma$ 
of $G$, it is also generated by the set (\ref{genI-G}).
\end{proof}

We define the \bfind{distance of $\cE$} to be the cut
\[
\dist\cE \>:=\> (-\Sigma_\cE)^+
\]
in $\widetilde{vK}$. By applying the distance operator to the right hand sides of
equations (\ref{SigmaAS}) and (\ref{SigmaKum}), we obtain:
\begin{corollary}                           \label{distdist}
If $\cE$ is an Artin-Schreier defect extension, then
\[
\dist\cE\>=\>\dist(\vartheta,K)
\]
for every Artin-Schreier generator $\vartheta$ of $\cE$. Consequently, all Artin-Schreier 
generators of $\cE$ have the same distance.

\pars
If $\cE$ is a Kummer extension, then
\[
\dist\cE\>=\> -\frac{vp}{p-1} \,+\,\dist(\eta,K)\>=\>\dist(\vartheta_\eta,K)
\]
for every Kummer generator $\eta$ of value $0$. Consequently, all Kummer generators of 
$\cE$ of value $0$ have the same distance.
\end{corollary}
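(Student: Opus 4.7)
The plan is to deduce Corollary~\ref{distdist} by applying the definition $\dist\cE := (-\Sigma_\cE)^+$ directly to the explicit formulas for $\Sigma_\cE$ established in Theorem~\ref{dist_galois_p}, with one small bookkeeping step to translate the cut $(v(a-K))^+$ in $\widetilde{vK}$ into $\dist(a,K)$.

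For the Artin-Schreier case, from equation (\ref{SigmaAS}) we have $-\Sigma_\cE = v(a-K)$, so $\dist\cE = (v(a-K))^+$. Because $(L|K,v)$ is a defect extension of prime degree it is immediate, and $a \in L\setminus K$ since $a$ generates the extension. Hence by Lemma~\ref{relation}(2) the set $v(a-K)$ has no maximal element and is contained in $vK$. Therefore $v(a-K)\cap vK = v(a-K)$, and by the very definition of distance from Section~\ref{sectcdist}, $(v(a-K))^+ = (v(a-K)\cap vK)^+ = \dist(a,K)$, as required.

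For the Kummer case, from (\ref{SigmaKum}) we have $-\Sigma_\cE = -\frac{vp}{p-1} + v(a-K)$. Translation by the fixed element $-\frac{vp}{p-1}\in \widetilde{vK}$ is an order isomorphism of $\widetilde{vK}$, so it preserves initial segments and commutes with the $+$-cut operator; explicitly $(-\Sigma_\cE)^+ = -\frac{vp}{p-1} + (v(a-K))^+$, in exact parallel with Lemma~\ref{dist}(b). Since $(L|K,v)$ is immediate and the 1-unit $a$ lies in $L\setminus K$, Lemma~\ref{relation}(2) again gives $(v(a-K))^+ = \dist(a,K)$, yielding $\dist\cE = -\frac{vp}{p-1} + \dist(a,K)$.

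The two \emph{Consequently} clauses are then automatic: in each case the left-hand side $\dist\cE$ is defined purely in terms of $\Sigma_\cE$, which by Theorem~\ref{dist_galois_p} is independent of the choice of generator (Artin-Schreier, respectively 1-unit Kummer), so the right-hand side must take the same value for every such generator. There is no genuine obstacle in the argument: Theorem~\ref{dist_galois_p} has already done all the substantive work of identifying $\Sigma_\cE$, and what remains is a definitional unwinding together with the observation, supplied by Lemma~\ref{relation}(2), that for the relevant generator $v(a-K) \subseteq vK$ so that its upper cut really is $\dist(a,K)$.
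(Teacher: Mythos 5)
Your proposal is correct and follows the same route as the paper: the paper simply applies the operator $\Sigma\mapsto(-\Sigma)^+$ to equations (\ref{SigmaAS}) and (\ref{SigmaKum}) from Theorem~\ref{dist_galois_p}, which is exactly what you do, with the (sound) extra bookkeeping that $v(a-K)\subseteq vK$ for these generators by immediacy and that translation by $-\frac{vp}{p-1}$ commutes with the cut operator. No issues.
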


\begin{proposition}                                 \label{distAS}
Take a Galois defect extension $\cE=(L|K,v)$ of prime degree $p$.
\sn
1) We have that
\begin{equation}                              \label{Egeq0}
\dist\cE\>\leq\> 0^-\>.
\end{equation}
If $\cE$ is an Artin-Schreier defect extension, then
\begin{equation}                              \label{distleq0}
\dist(\vartheta,K)\>\leq\> 0^-
\end{equation}
for every Artin-Schreier generator $\vartheta$. If $\cE$ is a Kummer defect extension, 
then
\begin{equation}                                  \label{disteta1}
0\><\>\dist(\eta,K) \>\leq\> \left(\frac{vp}{p-1}\right)^-
\end{equation}
for every Kummer generator $\eta$ of value $0$.
\sn
2) The extension $\cE$ has independent defect if and only if
\begin{equation}                                \label{distE=H-}
\dist\cE\>=\>H_\cE^-
\end{equation}
for some proper convex subgroup $H_\cE$ of $vK$. If this is the case, then $vK/H_\cE$ 
has no smallest positive element. In particular, if $vK$ is archimedean,
then $\cE$ has independent defect if and only if $\dist\cE=0^-$.
\sn
3) If $\cE$ is an Artin-Schreier defect extension with Artin-Schreier generator 
$\vartheta$, then it has independent defect if and only if
\begin{equation}                        \label{dist=H-}
\dist(\vartheta,K) \>=\> H_\cE^-
\end{equation}
for some proper convex subgroup $H_\cE$ of $vK$.

A Kummer defect extension of prime degree with Kummer generator 
$\eta$ of value $0$ has independent defect if and only if
\begin{equation}                        \label{dist=+H-}
\dist(\eta,K) \>=\> \frac{vp}{p-1} \,+\, H_\cE^-\>,
\end{equation}
or equivalently,
\begin{equation}                        \label{dist=+H-eq}
\dist(\vartheta_\eta,K) \>=\> H_\cE^-\>,
\end{equation}
for some convex subgroup $H_\cE$ of $vK$. If this holds, then $H_\cE$ does not 
contain $vp$.
\end{proposition}
\begin{proof}
1): Inequality (\ref{Egeq0}) follows from Theorem~\ref{dist_ext_p} together with the 
definition of $\Sigma_\cE$ in Theorem~\ref{dist_galois_p}. From inequality (\ref{Egeq0}) 
we obtain inequality (\ref{distleq0}) and the second inequality in (\ref{disteta1}) by an
application of Corollary~\ref{distdist}. The first inequality
in (\ref{disteta1}) follows from Theorem~\ref{CutImm} since $v\eta=0$.
\sn
2): By definition, $\dist\cE=(-\Sigma_\cE)^+$. Applying Lemma~\ref{S^+=H^-} to the 
initial segment $\Sigma=-\Sigma_\cE$, we see that $(-\Sigma_\cE)^+=H_\cE^-$ holds if and 
only if $\,\Sigma_\cE=\{\alpha\in vK\mid \alpha >H_\cE\}$ and $vK/H_\cE$ has no smallest
positive element. This is the definition for $\cE$ to have independent defect.

The final assertion of part 2) follows from the facts that the only proper convex 
subgroup in an archimedean ordered abelian group is $\{0\}$ and that $\dist\cE$ cannot 
have a largest element because the extension $\cE$ is immediate.
\sn
3): This follows from part 2) together with Corollary~\ref{distdist}. In the case 
of a Kummer extension we have that $\dist(\eta,K)>v\eta=0$, so $H_\cE$ cannot contain 
$vp$. 
\end{proof}

\begin{proposition}                                    \label{idfKK^r}
Take a Galois defect extension $\cE=(L|K,v)$ of prime degree $p$ with an 
Artin-Schreier or Kummer generator $a$. Further, choose any extension of $v$ from $K(a)$ 
to $\tilde K$, take $(K^r,v)$ to be the absolute ramification field of $(K,v)$, and $N$ 
to be an intermediate field of $K^r|K$. Then also $\cE_N:=
(L.N|N,v)$ is a Galois defect extension of degree $p$, 
\[
\dist\cE_N\>=\>\dist\cE\>,
\]
and $\cE_N$ has independent defect if and only $\cE$ has. Further, if $(N,v)$ is an
independent defect field, then so is $(K,v)$.
\end{proposition}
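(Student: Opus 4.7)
The plan is to base change $\cE$ from $K$ to $L$ and reduce all four claims to facts already supplied by Proposition~\ref{K(a)K^r(a)}, Lemma~\ref{dist_hens}, Proposition~\ref{tamedef}, Theorem~\ref{dist_ext_p} and Proposition~\ref{distAS}(2). To resolve the overloaded notation in the statement, I write $\cE=(K(a)|K,v)$ throughout and reserve the symbol $L$ for the intermediate field of $K^r|K$.

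First I would establish that $\cE_L=(L(a)|L,v)$ is a Galois defect extension of degree $p$. Proposition~\ref{K(a)K^r(a)} gives $[K^r(a):K^r]=p$, so $a\notin K^r$, hence $a\notin L$ and $[L(a):L]=p$, since the $L$-minimal polynomial of $a$ divides the prime-degree $K$-minimal polynomial. For the defect, set $L^h=L\cdot K^h$ inside $K^r$; this is the henselization of $L$, and $L^h|K^h$ is a subextension of the tame extension $K^r|K^h$, hence itself tame. Proposition~\ref{tamedef} applied over the henselian field $K^h$ yields $d(L^h(a)|L^h,v)=d(K^h(a)|K^h,v)$, while two applications of Lemma~\ref{dist_hens} give $d(K^h(a)|K^h,v)=d(K(a)|K,v)=p$ and $d(L(a)|L,v)=d(L^h(a)|L^h,v)$. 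Hence $d(L(a)|L,v)=p=[L(a):L]$, so $\cE_L$ is an immediate Galois defect extension of prime degree.

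Second I would verify $\dist\cE_L=\dist\cE$. Using Theorem~\ref{dist_ext_p} with any $\sigma\in\Gal(K)$ satisfying $\sigma a\ne a$,
\[
\Sigma_\cE\>=\>-v(a-K)+v(\sigma a-a) \quad\mbox{and}\quad \Sigma_{\cE_L}\>=\>-v(a-L)+v(\sigma a-a),
\]
and taking the cut $(-\,\cdot\,)^+$ in the common divisible hull $\widetilde{vK}=\widetilde{vL}$ yields $\dist\cE=\dist(a,K)-v(\sigma a-a)$ and $\dist\cE_L=\dist(a,L)-v(\sigma a-a)$. From $K\subseteq L\subseteq K^r$ and the monotonicity of $\dist(a,\cdot)$ recalled in Section~\ref{sectcdist}, we have $\dist(a,K)\leq\dist(a,L)\leq\dist(a,K^r)$, and Proposition~\ref{K(a)K^r(a)} forces the outer terms to coincide. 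Hence $\dist(a,L)=\dist(a,K)$ and therefore $\dist\cE_L=\dist\cE$.

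Third I would conclude. By Proposition~\ref{distAS}(2), $\cE$ has independent defect if and only if $\dist\cE=H^-$ for some proper convex subgroup $H$ of $\widetilde{vK}$, and the analogous statement holds for $\cE_L$ with $\widetilde{vL}$. Since $\widetilde{vK}=\widetilde{vL}$ (as $L|K$ is algebraic) and $\dist\cE=\dist\cE_L$, the two conditions refer to the same family of $H$ and are equivalent. For the final assertion, given any Galois defect extension $\cE$ of $K$ of prime degree, the associated $\cE_L$ has independent defect by the hypothesis on $(L,v)$, and hence so does $\cE$. The only real obstacle is Step~1: one must correctly identify the henselization $L^h=L\cdot K^h$ inside $K^r$ and invoke the tameness of $K^r|K^h$ to bring Proposition~\ref{tamedef} to bear; once the defect and distance of $\cE_L$ are in hand, everything else is formal.
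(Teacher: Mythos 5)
Your proof is correct and follows essentially the same route as the paper's: both reduce everything to the absolute ramification field via Proposition~\ref{K(a)K^r(a)}, the paper by noting $L^r=K^r$ and applying that proposition over $L$ directly, you by unwinding its proof through $L^h=L.K^h$ and tameness of $K^r|K^h$, and by sandwiching $\dist(a,L)$ between the equal quantities $\dist(a,K)$ and $\dist(a,K^r)$. The only loose ends --- unibranchedness of $(L(a)|L,v)$ before invoking Lemma~\ref{dist_hens} (which follows from $[L^h(a):L^h]=[K^r(a):K^r]=p$), and the fact that $[L(a):L]=p$ really rests on the Galois structure of $K(a)|K$ rather than mere divisibility of minimal polynomials --- are one-line verifications that the paper's own terser proof also leaves implicit.
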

\begin{proof}
We may assume that $a$ is a generator of $\cE$ as in Theorem~\ref{dist_galois_p}. By 
equation (\ref{distKN}) of Proposition \ref{K(a)K^r(a)}, also $\cE_N$ is a Galois
defect extension of prime degree $p$, and $\dist(a,N)=\dist(a,K)$. In view of 
Corollary~\ref{distdist}, we obtain that $\dist\cE_N=\dist\cE$. From this,
the third assertion follows by part 2) of Proposition~\ref{distAS}. 
\pars
In order to prove the final assertion, assume that $(N,v)$ is an independent defect 
field. Take a $p$-th root of unity $\zeta_p$. Then by definition, also $N(\zeta_p)
\subseteq K^r=K(\zeta_p)^r$ is an independent defect field with respect to $v$. 
Take any Galois defect extension of degree $p$ of $K(\zeta_p)$ with a generator $a$ as 
above. Then $(N(\zeta_p)(a)|N(\zeta_p),v)$ has
independent defect, and by what we have proved already, the same is true for 
$(K(\zeta_p)(a)|K(\zeta_p),v)$. This shows that $(K,v)$ is an independent defect field.
\end{proof}

\mn
{\it Proof of Theorem~\ref{eqindep}.} 
We have shown in Theorem~\ref{dist_galois_p} that $\Sigma_{\cE}$ is the unique 
ramification jump of $\cE$, and it follows that $I_\cE$ is the unique ramification ideal 
of $\cE$. Thus the equivalence of assertions a) and b) follows from the definition of
independent defect in cases where the condition that $vK/H_\cE$ has no smallest positive
element always holds. It is straightforward to show that this happens when $(K,v)$ 
satisfies (DRvg). The equivalence of assertions b) 
and c) in Theorem~\ref{eqindep} is always valid because
$vL=vK$ and an ideal $I_\Sigma$ of $\cO_L$ is prime if and only if $\Sigma=\{\alpha\in 
vL\mid \alpha >H\}$ for some proper convex subgroup $H$ of $vL$.

The remaining assertions follow from basic facts of valuation theory.
\qed

\parm
For Artin-Schreier defect extensions, a different definition was given for dependent 
and independent defect in \cite{[Ku6]}. We will show in the next section that our 
new definition is consistent with the previous one.

%
%
%
\subsection{Artin-Schreier defect extensions}              \label{sectASde}
\mbox{ }\sn
In this section, we consider the case of a valued field $(K,v)$ of positive
characteristic $p$ and an Artin-Schreier defect extension $(L|K,v)$ with 
Artin-Schreier generator $\vartheta$, that is,
$\vartheta^p-\vartheta\in K$. The following definition was introduced in~\cite{[Ku6]}:
if there is an immediate purely inseparable extension $(K(\eta)|K,v)$ of degree $p$ 
such that
\begin{equation}                \label{depdef}
\vartheta \>\sim_K\> \eta\>,
\end{equation}
then we say that the Artin-Schreier defect extension has \bfind{dependent defect};
otherwise it has \bfind{independent defect}. Note that (\ref{depdef}) implies that 
$\dist(\eta,K)<\infty$, that is, $\eta$ does not lie in the completion of $(K,v)$, 
since otherwise it would follow that $\vartheta =\eta$.

The above definition does not depend on the Artin-Schreier generator of the extension 
$L|K$. Indeed, by \cite[Lemma 2.26]{[Ku6]}, $\vartheta'\in L$ is another Artin-Schreier 
generator of $L|K$ if and only if  $\vartheta' =i\vartheta +c$ for some $i\in\F_p^\times$ 
and $c\in K$. If we set $\eta'=i\eta+c$, then $K(\eta)=K(\eta')$ and $v(\vartheta'-
\eta')=v(i(\vartheta-\eta))=v(\vartheta-\eta)>\dist(\vartheta,K)$, that is, 
$\vartheta' \sim_K \eta'$. 

\begin{proposition}                              \label{ASindep}
An Artin-Schreier defect extension has dependent defect in the sense as defined in
\cite{[Ku6]} if and only if it has dependent defect in the sense as defined in the
introduction.
\end{proposition}
\begin{proof}
Take an Artin-Schreier defect extension $\cE=(L|K,v)$ with Artin-Schreier generator 
$\vartheta$, and write $p=\chara K$. 

First assume that $\vartheta\sim_K\eta$ holds for some element $\eta$ such that 
$\eta^p\in K$. Then $v(\vartheta-\eta)>
\dist(\vartheta,K)$. On the other hand, $pv(\vartheta-\eta)=v(\vartheta^p-\eta^p)=
v(\vartheta + \vartheta^p-\vartheta - \eta^p)\in v(\vartheta-K)$. Suppose that 
(\ref{dist=H-}) holds for some proper convex subgroup $H_\cE$ of $vK$. Then 
$\alpha\leq v(\vartheta-\eta)$ for some $\alpha\in H_\cE$, but 
$p\alpha\leq pv(\vartheta-\eta)< H_\cE$, a contradiction. Now Proposition~\ref{distAS} 
shows that $(L|K,v)$ has dependent defect in the sense as defined in the introduction.

\pars
To prove the converse, assume that $(L|K,v)$ has dependent defect in the sense as 
defined in the introduction. This means that either there is a convex subgroup $H$ of 
$vK$ such that $v(\vartheta-K)=vK^{\leq 0}\setminus H$ but $vK/H$ has a smallest 
positive element, or that there is no such convex subgroup at all. In the former case, 
take $\gamma\in vK^{\leq 0}$ such that $-\gamma+H$ is that smallest positive element.
Then $0>\gamma\notin H$, hence $\gamma\in v(\vartheta-K)$. It follows that for every 
$\beta\in v(\vartheta-K)$, $\beta+H\leq \gamma+H<0$ and therefore, $p(\beta+H)<\gamma+H$.
This implies that $p\beta<\gamma$, showing that $pv(\vartheta-K)<\gamma$. Choosing
$c\in K$ such that $\gamma=v(\vartheta-c)$, we obtain:
\begin{equation}                               \label{prep4.5c)}
pv(\vartheta-c-K) \>=\>pv(\vartheta-K)\><\>v(\vartheta-c)\>.
\end{equation}

On the other hand, if there is no such convex subgroup at all, then the set $\{\gamma\in
vK\mid v(\vartheta-K)<\beta\leq 0\}$ is not closed under addition; more specifically, 
there is some $\beta$ in this set and some $c\in K$ such that $p\beta\leq v(\vartheta
-c)$. As $pv(\vartheta-K)<p\beta$, we again obtain that (\ref{prep4.5c)}) holds.

Set $a:=(\vartheta-c)^p-(\vartheta-c)
\in K$ so that the Artin-Schreier generator $\vartheta-c$ becomes a root of the 
Artin-Schreier polynomial $X^p-X-a$. Then by \cite[Theorem 4.5 (c)]{[Ku6]},
the root $\eta$ of the polynomial $X^p-a$ generates an immediate extension which does 
not lie in the completion of $(K,v)$, and $\vartheta-c\sim_K\eta$ holds.
\end{proof}

\pars
The name ``dependent defect'' was chosen in \cite{[Ku6]} because the existence of 
Artin-Schreier defect extensions with a defect of $(K,v)$ that is dependent according 
to the definition given in \cite{[Ku6]} depends on the existence 
of purely inseparable defect extensions of degree $p$ that do not lie in the completion; 
\cite[Proposition 4.3]{[Ku6]} shows how the former are constructed from the latter. If 
$(K,v)$ admits any purely inseparable defect extension not contained in its completion, 
then it also admits one of degree $p$. This is proved in the beginning of Section 4.3 of
\cite{[Ku6]}. This then leads to an Artin-Schreier defect extension with dependent defect.

The reverse construction is given in the foregoing proof. Hence if $(K,v)$ has 
an Artin-Schreier defect extension with dependent defect in the sense as defined in 
the introduction, then it admits an immediate purely inseparable extension of degree 
$p$ that does not lie in the completion of $(K,v)$. 
This proves part 2) of Proposition~\ref{idf}.

\bn
%
%
\section{Semitame, deeply ramified and rdr fields}                     \label{sectdr}
{\bf Throughout this section, we will consider a valued field $(K,v)$ of residue 
characteristic $p>0$.} All statements we will prove are trivial for valued fields 
of residue characteristic $0$.

When we deal with valued fields $(K,v)$ of mixed characteristic with residue
characteristic $p$, we will write 
$v=v_0\circ v_p\circ\ovl{v}$ as in the paragraph before Proposition~\ref{rdrrk1}, 
set $\erf(K,v):=Kv_0 v_p$ and denote by $(vK)_{vp}$ the smallest convex subgroup of 
$vK$ that contains $vp$. Further, $\frac{1}{p^\infty}\Z vp$ will denote the 
$p$-divisible hull of the subgroup $\Z vp$ of $vK$ generated by $vp$.
If $K$ has positive characteristic $p$, then we set 
$\erf(K,v):=Kv$ and $(vK)_{vp}=Kv$.

%
%
\subsection{Some basic results}                     
\mbox{ }\sn
To start with, we state a few simple observations.
\begin{lemma}                                     \label{OK/pOK}
1) If $\chara K=p>0$, then
\begin{equation}                                  \label{OK/pOKeq}
\cO_K/p\cO_K \ni x\mapsto x^p\in \cO_K/p\cO_K
\end{equation}
is surjective if and only if $K$ is perfect; in particular, (DRvr) holds if and only 
if $\hat K$ is perfect.
\mn
2) If (\ref{OK/pOKeq}) is surjective, then (DRvr) holds.
\mn
3) If $\chara K=0$, then the following assertions are equivalent:
\pars
a) (\ref{OK/pOKeq}) is surjective,
\pars
b) for every $a\in\cO_K$ there is $c\in\cO_K$ such that $a\equiv c^p\mod p\cO_K\,$,
\pars
c) for every $\hat a\in\cO_{\hat K}$ there is $c\in\cO_K$ such that $\hat a\equiv 
c^p\mod p\cO_{K(\hat a)}\,$,
\pars
d) (DRvr) holds.
\mn
4) If $(K,v)$ satisfies (DRvr), then so does every extension of $(K,v)$ within its completion.
\end{lemma}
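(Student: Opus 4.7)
The plan is to treat parts 1)--4) in sequence, using two recurring tools. In equal characteristic $p$, the relation $p\cO_K=0$ reduces \eqref{OK/pOKeq} to the Frobenius and upgrades congruences to identities via $(x-y)^p=x^p-y^p$; in mixed characteristic, density of $K$ in $\hat K$ lets us approximate elements of $\cO_{\hat K}$ by elements of $\cO_K$ up to any prescribed value, while the binomial expansion of $((\hat b-c)+c)^p$ gives control over $v(\hat b^p-c^p)$.

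For part 1), in characteristic $p$ we have $p\cO_K=0$, so surjectivity of \eqref{OK/pOKeq} says that every $a\in\cO_K$ is a $p$-th power in $\cO_K$ (any $p$-th root in $K$ automatically has nonnegative value and hence lies in $\cO_K$). Since $K$ is the fraction field of $\cO_K$ and $K^p$ is a subfield of $K$, this is equivalent to $K=K^p$, i.e., to $K$ being perfect. The same argument applied to $\hat K$ gives (DRvr) $\Leftrightarrow$ $\hat K$ perfect.

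For part 2), in the equal-characteristic case it suffices to show $K$ perfect $\Rightarrow$ $\hat K$ perfect: given $\hat a\in\cO_{\hat K}$, pick a Cauchy sequence $a_n\in\cO_K$ with $a_n\to\hat a$ and $b_n\in\cO_K$ with $b_n^p=a_n$; the identity $(b_n-b_m)^p=a_n-a_m$ gives $p\cdot v(b_n-b_m)=v(a_n-a_m)$, so $(b_n)$ is Cauchy with limit $\hat b\in\cO_{\hat K}$ satisfying $\hat b^p=\hat a$. In mixed characteristic, for $\hat a\in\cO_{\hat K}$ choose $a\in\cO_K$ with $v(\hat a-a)\geq vp$, apply the hypothesis to produce $c\in\cO_K$ with $v(a-c^p)\geq vp$, and conclude $v(\hat a-c^p)\geq vp$. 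For part 3), (a) $\Rightarrow$ (b) is this same argument, with the further remark that $c\in\cO_K\subseteq\cO_{K(\hat a)}$, so $\hat a-c^p\in p\cO_{K(\hat a)}$; (b) $\Rightarrow$ (c) is immediate on setting $\hat b:=c$. For (c) $\Rightarrow$ (a), given $a\in\cO_K$, (DRvr) yields $\hat b\in\cO_{\hat K}$ with $v(a-\hat b^p)\geq vp$; approximate $\hat b$ by $c\in\cO_K$ with $v(\hat b-c)\geq vp$ and expand $\hat b^p=((\hat b-c)+c)^p$ by the binomial theorem to obtain
\[
v(\hat b^p-c^p)\;\geq\;\min\!\bigl(p\cdot v(\hat b-c),\;vp+v(\hat b-c)\bigr)\;\geq\;vp,
\]
so that $a\equiv c^p\mod p\cO_K$. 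For part 4), any valued subfield $(K',v)$ of $\hat K$ containing $K$ has the same completion as $(K,v)$, hence $\cO_{\widehat{K'}}=\cO_{\hat K}$ and (DRvr) for $K'$ is literally the same statement as (DRvr) for $K$.

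The only step needing any care is (c) $\Rightarrow$ (a): one must observe that choosing $v(\hat b-c)\geq vp$ (an element of $vK$, hence certainly achievable by density) is enough to force both terms in the minimum above to be $\geq vp$, so the argument is uniform and requires no divisibility assumption on $vK$. All other steps are routine approximations.
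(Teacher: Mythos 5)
Your proof is correct and follows essentially the same route as the paper's: part 1) via $p\cO_K=\{0\}$, parts 2) and 3) via approximation in the completion together with the binomial congruence $(x+y)^p\equiv x^p+y^p \bmod p\cO$, and part 4) via the equality of completions. The only cosmetic differences are that you write out the standard proof that the completion of a perfect field is perfect (the paper just cites this fact), you close the cycle in part 3) as a)$\Rightarrow$b)$\Rightarrow$c)$\Rightarrow$a) instead of b)$\Rightarrow$a)$\Rightarrow$c)$\Rightarrow$b), and you dispatch part 4) by the clean (and correct) remark that (DRvr) is a condition on $\cO_{\hat K}$ alone, whereas the paper re-derives surjectivity of the $p$-power map over the intermediate field via part 3).
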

\begin{proof}
1): From $\chara K=p>0$ it follows that $p\cO_K=\{0\}$, hence the surjectivity of the
homomorphism in (\ref{homOpO}) means that every element in $\cO_K$ is a $p$-th power. 
Hence the same is true for every element in $K$, i.e., $K$ is perfect. Replacing $K$ by 
$\hat K$ in (\ref{OK/pOKeq}), we thus obtain
that $\hat K$ is perfect.

\sn
2): Assume first that $\chara K=p>0$. Then by part 1) the surjectivity of 
(\ref{OK/pOKeq}) implies that $K$ is perfect. Since the completion of a perfect field 
is again perfect, it follows that $\hat K$ is perfect. Hence again
by part 1), (DRvr) holds.

Now assume that $\chara K=0$. Take $\hat a\in\cO_{\hat K}\,$. Then there exists 
$a\in K$ such that $\hat a\equiv a \mod p\cO_{\hat K}\,$. By assumption, there is 
some $c\in\cO_K$ such that $a\equiv c^p\mod p\cO_K\,$. It follows
that $\hat a\equiv a\equiv c^p\mod p\cO_{\hat K}\,$, showing that (DRvr) also 
holds in this case.

\sn
3): Assume that $\chara K=0$. The proof of the equivalence of a) and b) is
straightforward. Trivially, c) implies b), and part 2) of our lemma shows that a) 
implies d). To show that d) implies c), take $\hat a\in\cO_{\hat K}\,$. Then by 
(DRvr), using the equivalence of a) and b) with $\hat K$ in place of $K$, there is 
$\hat c\in \cO_{\hat K}$ such that $\hat a\equiv {\hat c}^p\mod p\cO_{\hat K}\,$. 
We take $c\in\cO_K$ such that $c\equiv {\hat c}\mod p\cO_{\hat K}$.
Then $\hat a\equiv {\hat c}^p\equiv c^p\mod p\cO_{\hat K}\,$, whence 
$\hat a\equiv c^p\mod p\cO_{K(\hat a)}\,$.

\sn
4): Take $(L|K,v)$ to be a subextension of $(\hat K |K,v)$. Then $\hat L =\hat K$, 
and in the case of $\chara K=p>0$ our assertion follows from part 1).

Now assume that $(K,v)$ is of mixed characteristic and satisfies (DRvr). Then by the
implication d)$\Rightarrow$c) of part 3), for every $\hat a\in\cO_{\hat K}=
\cO_{\hat L}$ there is $c\in\cO_K\subseteq \cO_L$ such that $\hat
a\equiv c^p\mod p\cO_{K(\hat a)}\,$. Hence (\ref{OK/pOKeq}) is surjective in $(L,v)$, 
and the implication a)$\Rightarrow$d) of part 3) shows that $(L,v)$ satisfies (DRvr).
\end{proof}

\begin{lemma}                                     \label{basprop1}
If $(K,v)$ satisfies (DRvr), then the following assertions hold:
\sn
1) The residue fields $Kv$ and $\erf(K,v)$ are perfect.
\sn
2) If $\chara K=p>0$, then $vK$ is $p$-divisible and $(K,v)$ is a semitame field.
\end{lemma}
\begin{proof}
To prove part 1), take any $a\in\cO$. By assumption, there is $\hat c\in \cO_{\hat K}$
such that $a\equiv {\hat c}^p \mod p\cO_{\hat K}\,$. From this we obtain that 
$av={\hat c}^p v= ({\hat c}v)^p\in \hat K v=Kv$. Hence $Kv$ is perfect. If $(K,v)$ is
of mixed characteristic, then the same holds with $v_0\circ v_p$ in place of $v$, 
which shows that $\erf(K,v)$ is perfect.

To prove part 2), assume that $\chara K=p>0$. Then by part 1) of Lemma~\ref{OK/pOK},
(DRvr) implies that $\hat K$ is perfect, so $vK=v\hat K$ is $p$-divisible and (DRst)
holds, showing that $(K,v)$ is a semitame field.
\end{proof}

Take any $d\in \cM_K\,$. If for every $a\in\cO_K$ there is $c\in\cO_K$ such that 
$a\equiv c^p\mod d\cO_K\,$, we will say that the function
\begin{equation}                                        \label{OK/dOKeq}
\cO_K \ni x\mapsto x^p\in \cO_K
\end{equation}
is \bfind{surjective modulo $d\cO_K\,$}. This implies that the function
\begin{equation}                                        \label{OK/dOKeqtimes}
\cO_K^\times\ni x\mapsto x^p\in \cO_K^\times
\end{equation}
is \bfind{surjective modulo $d\cO_K\,$} (with the obvious modification of the above
definition). 

\begin{lemma}                                     \label{basprop2}
For a valued field $(K,v)$ of mixed characteristic, the following assertions hold:
\sn
1) If $(K,v)$ is an rdr field, then $(vK)_{vp}$ is $p$-divisible; in particular,
$vK$ contains $\frac{1}{p^\infty}\Z vp$. 
If in addition $(vK)_{vp}=vK$, then $(K,v)$ is a semitame field.
\mn
2) Assume that for $d\in \cM_K$ the function (\ref{OK/dOKeqtimes}) is
surjective modulo $d\cO_K\,$. Then for every $a\in K$ with $p$-divisible value 
$va$ there is $c\in K$ such that
\begin{equation}                              \label{v(a-cp)}
v(a-c^p) \>\geq\> va+vd\>.
\end{equation}
If in addition $vd\in (vK)_{vp}$ and $(vK)_{vp}$ is $p$-divisible, then the function 
(\ref{OK/dOKeq}) is surjective modulo $d\cO_K\,$. 
\end{lemma}
\begin{proof}
1): First, let us show that every $\alpha\in vK$ with $0\leq \alpha<vp$ is divisible 
by $p$. Take $a\in \cO$ such that $va=\alpha$. From (DRvr) we obtain that there is 
$\hat c\in \cO_{\hat K}$ such that $a\equiv {\hat c}^p \mod p\cO_{\hat K}\,$. Since
$va<vp$, this yields that $va=v{\hat c}^p=pv{\hat c}$, showing that
$\alpha=va$ is divisible by $p$ in $v\hat K=vK$.

By assumption, $vp$ is not the smallest positive element in $vK$, hence there is 
$\alpha\in vK$ such that $0<\alpha<vp$, and we know that $\alpha$ is divisible by $p$. 
We may assume that $2\alpha\geq vp$ since otherwise we replace $\alpha$ by $vp-\alpha$.
In this way we make sure that $(vK)_{vp}$ is equal to the smallest convex
subgroup containing $\alpha$. This implies that for every $\beta\in (vK)_{vp}$ there 
is some $n\in\Z$ such that $0\leq\beta-n\alpha<vp$. Then by what we have already shown, 
$\beta-n\alpha$ is divisible by $p$. Since also
$\alpha$ is divisible by $p$, the same is consequently true for $\beta$.

If in addition $(vK)_{vp}=vK$, then $vK$ is $p$-divisible, and since (DRvr) holds by
assumption, $(K,v)$ is a semitame field.

\sn
2): Take $a\in K$ with $p$-divisible value. Then there is $b\in K$ such that 
$pvb=va$. Hence $vb^{-p}a=0$ and by assumption, there is $c_0\in K$ 
such that $v(b^{-p}a-c_0^p)\geq vd$, whence
\[
v(a-(bc_0)^p) \>=\> pvb+v(b^{-p}a-c_0^p) \>\geq\> va + vd\>.
\]
With $c:=bc_0\,$, this yields (\ref{v(a-cp)}).

Now assume in addition that $vd\in (vK)_{vp}$ and $(vK)_{vp}$ is $p$-divisible, and take 
$a\in \cO_K\,$. If $va>(vK)_{vp}\,$, then $a\equiv 0^p \mod d\cO_K\,$. If $va\in 
(vK)_{vp}\,$, then $va$ is $p$-divisible and by what we have already shown there is 
$c\in K$ such that $a\equiv c^p \mod d\cO_K\,$. This proves that (\ref{OK/dOKeq}) is
surjective modulo $d\cO_K\,$.
\end{proof}

\begin{proposition}                                   \label{dpd'1}
Take a valued field $(K,v)$ of mixed characteristic such that $(vK)_{vp}$ 
is $p$-divisible. Further, take $d\in \cM_K$ such that $vd<vp$ and $nvd\geq vp$ for 
some $n\in\N$. Then the following assertions are equivalent:
\sn
a) the function (\ref{OK/pOKeq}) is surjective, so $(K,v)$ is an rdr field,
\sn
b) the function (\ref{OK/dOKeq}) is surjective modulo $d\cO_K\,$,
\sn
c) the function (\ref{OK/dOKeqtimes}) is surjective modulo $d\cO_K\,$.
\end{proposition}
\begin{proof}
Since $vd<vp$, we have that $p\cO_K\subset d\cO_K\,$. Hence the proof of
implication a)$\Rightarrow$c) is straightforward. Implication c)$\Rightarrow$b) is the
content of part 2) of Lemma~\ref{basprop2}.
\sn
b)$\Rightarrow$a): Assume that assertion b) holds, and take any $a\in\cO_K\,$. By 
part 2) of Lemma~\ref{basprop2}, there is $c_1\in K$ such that $v(a-c_1^p)\geq vd$.
Now we proceed by induction: having chosen $c_k$ such that 
\[
v(a-c_1^p-\ldots -c_k^p)\>\geq\> kvd\>,
\]
we can employ part 2) of Lemma~\ref{basprop2} again to find $c_{k+1}\in K$ such that
\[
v(a-c_1^p-\ldots -c_k^p-c_{k+1}^p)\geq kvd+vd\>.
\] 
After $n$ many steps we have:
\[
v(a-c_1^p-\ldots -c_n^p)\>\geq\> nvd\geq vp\>.
\]
Using part 1) of Lemma~\ref{congp}, we obtain: 
\[
a \>\equiv\> c_1^p+\ldots +c_n^p \>\equiv\> (c_1^p+\ldots +c_n)^p\>\mod p\cO_K\>.
\]
This proves that the function (\ref{OK/pOKeq}) is surjective. By part 2) of
Lemma~\ref{OK/pOK}, (DRvr) holds. By assumption, $(vK)_{vp}$ 
is $p$-divisible, hence also (DRvp) holds. This proves that $(K,v)$ is an rdr field.
\end{proof}

\parm
\begin{lemma}                                                        \label{maxnots}
Assume that $(K,v)$ is of mixed characteristic with $(vK)_{vp}$ $p$-divisible and $Kv$ 
perfect, and take $\eta\in \tilde{K}$ such that $\eta^p\in\cO_K\,$. Then either 
$v(\eta-K)$ does not admit a maximal element, or its maximal element is not smaller 
than $\frac{vp}{p}$.
\end{lemma}
\begin{proof}
Take $c\in K$ such that $0\leq v(\eta-c)<\frac{vp}{p}$. Then by use of 
(\ref{modp}) it follows that $v(\eta^p-c^p)=v(\eta-c)^p<vp$. Since $(vK)_{vp}$ is 
$p$-divisible, there is some $d_1\in K$ such that 
$vd_1^p(\eta^p-c^p)=0$, and since $Kv$ is perfect, there is some $d_2\in K$ such
that $(d_2^pd_1^p(\eta^p-c^p))v=1$. With $d=(d_1d_2)^{-1}$ it follows that
$v(d^{-p}(\eta^p-c^p)-1)>0$, whence $v(\eta^p-c^p-d^p)>v(\eta^p-c^p)$. Again by
(\ref{modp}), we obtain that $(\eta-c-d)^p\equiv \eta^p-c^p-d^p \mod p\cO$, and it
follows that $v(\eta-c-d)>v(\eta-c)$.
\end{proof}

%
%
\subsection{Proof of Theorem~\ref{connprop}}                    
\mbox{ }\sn
1): Assume that $(K,v)$ is nontrivially valued. The implication tame field
$\Rightarrow$ separably tame field is obvious, and so is the implication semitame 
field $\Rightarrow$ deeply ramified field. To prove the implication deeply ramified 
field $\Rightarrow$ rdr field, we first observe that if $\chara K=p>0$, then 
$vp=\infty$ which is not the smallest positive element of $vK$. If $\chara K=0$, then 
$vp$ is not the smallest positive element of $vK$ since otherwise, if $\Gamma$ is the 
largest convex subgroup of $vK$ not containing $vp$, then
$(vK)_{vp}/\Gamma\isom\Z$ in contradiction to (DRvg).

Now assume that $(K,v)$ is a separably tame field. If $\chara K>0$, then
by \cite[Corollary 3.12]{[K7]}, $(K,v)$ is dense in its perfect hull. Then the 
completion of the perfect hull is also the completion of $(K,v)$. Since the 
completion of a perfect valued field is again perfect, we obtain that
the completion of $(K,v)$ is perfect. Now part 1) of Lemma~\ref{OK/pOK}
shows that $(K,v)$ is a semitame field. 

Assume that $\chara K=0$. Then the separably tame field $(K,v)$ is a tame field.
By \cite[Lemma 3.1 and Theorem 3.2]{[K7]}, $(K,v)$ is defectless, $vK$ 
is $p$-divisible and $Kv$ is
perfect. Take any $b\in K$ that is not a $p$-th power, and take $\eta\in\tilde{K}$ 
with $\eta^p=b$. The unibranched extension $(K(\eta)|K,v)$ is defectless, hence by 
Lemma~\ref{imm_deg_p}, $v(\eta-K)$ has a maximal element.
By Lemma~\ref{maxnots}, this maximal element is not smaller than $\frac{vp}{p}$. 
Now part 3) of Lemma~\ref{congp} shows the existence of $c\in K$ such
that $b\equiv c^p \mod p\cO_K\,$. This proves that $(K,v)$ is a semitame field.

\sn
2): Assume that $(K,v)$ is an rdr field of rank 1 and mixed characteristic. Since the 
rank is 1, we have that $(vK)_{vp}=vK$. Hence by part 1) of Lemma~\ref{basprop2}, 
$(K,v)$ is a semitame field. This together with part 1) of our theorem shows the 
required equivalence in the case of mixed characteristic. For the case of equal
characteristic, it will be shown in the proof of part 3).

\sn
3): Assume that $(K,v)$ is a nontrivially valued field of characteristic $p>0$.
\sn
The implications a)$\Rightarrow$b)$\Rightarrow$c) have already been shown in part 1).
\sn
c)$\Rightarrow$d): This holds by definition.
\sn
d)$\Rightarrow$e): This holds by part 1) of Lemma~\ref{OK/pOK}.
\sn
e)$\Rightarrow$f): If the completion of $(K,v)$ is perfect, then it contains the perfect 
hull of $K$; since $(K,v)$ is dense in its completion, it is then also dense in its 
perfect hull.
\sn
f)$\Rightarrow$g): If $(K,v)$ is dense in its perfect hull, then in particular it is dense 
in $K^{1/p}=\{a^{1/p}\mid a\in K\}$. Since $x\mapsto x^p$ is an isomorphism which preserves
valuation divisibility, the latter holds if and only if $(K^p,v)$ is dense in $(K,v)$.
\sn
g)$\Rightarrow$f): Assume that $(K^p,v)$ is dense in $(K,v)$. Since for each $i\in\N$,
$x\mapsto x^{p^i}$ is an isomorphism which preserves valuation divisibility, it follows that 
$(K^{1/p^{i-1}},v)$ is dense in $(K^{1/p^i},v)$. By transitivity
of density we obtain that $(K,v)$ is dense in $(K^{1/p^i},v)$ for each $i\in\N$, and 
hence also in its perfect hull.
\sn
f)$\Rightarrow$e): This implication was already shown in the proof of part 1) of our theorem.
\sn
e)$\Rightarrow$a): Assume that $\hat K$ is perfect. The extension $(\hat K|K,v)$ is
immediate, so $vK=v\hat K$, which is $p$-divisible. Hence (DRst) holds. By part 1) of
Lemma~\ref{OK/pOK}, also (DRvr) holds.
\sn
4): The assertion follows from the implication f)$\Rightarrow$a) of part 3) as a perfect 
field is equal to its perfect hull.
\qed

%
%
\subsection{Proof of Propositions~\ref{rdrrk1} and~\ref{wcow}}        
\mbox{ }\sn
For the proof of Propositions~\ref{rdrrk1} and~\ref{wcow}, we will need some 
preparation.
\begin{lemma}                        \label{rdrcoars}
Assume that $(K,v)$ is of mixed characteristic, and set $w:=v_p\circ\ovl{v}$. Then 
$(K,v)$ is an rdr field if and only if $(Kv_0,w)$ is an rdr field.
\end{lemma}
\begin{proof}
First assume that $(K,v)$ is an rdr field. Then $vp$ is not the smallest positive 
element in $vK$, which implies that $w p$ is not the smallest element in $w (Kv_0)$. 
Take any $b\in\cO_{Kv_0}\,$. Then choose $a\in \cO_K$ such that $av_0=b$. Since 
$(K,v)$ is an rdr field, there is some $c\in \cO_K$ such that $a-c^p\in p\cO_K$. It
follows that $cv_0\in \cO_{Kv_0}$ with $b-(cv_0)^p= (a-c^p)v_0\in p\cO_{Kv_0}\,$,
showing that $(Kv_0,w)$ satisfies (DRvr) by part 3) of Lemma~\ref{OK/pOK}. Hence 
$(Kv_0,w)$ is an rdr field.

\pars
Now assume that $(Kv_0,w)$ is an rdr field. Then $w p$ is not the smallest element in
$w(Kv_0)$, which implies that $vp$ is not the smallest positive element in $vK$. Take any 
$a\in \cO_K\,$. Then $av_0\in\cO_{Kv_0}\,$ and there is some $d\in\cO_{Kv_0}$ such that 
$av_0- d^p\in p\cO_{Kv_0}\,$. Choose $c\in\cO_K$ such that $cv_0=d$. It follows that 
$a-c^p\in p\cO_K\,$. Again using part 3) of Lemma~\ref{OK/pOK}, we conclude that $(K,v)$ 
is an rdr field.
\end{proof}

\mn
{\it Proof of Proposition~\ref{rdrrk1}.}
\n
The equivalence of assertions a) and b) is proved in Lemma~\ref{rdrcoars}. 
To prove the equivalence of assertions a) and c), we may assume that $v_0$ is trivial, 
that is, $Kv_0=K$, $v=v_p\circ\ovl v$ and $vK=(vK)_{vp}\,$. The assertion is trivial if 
$\ovl v$ is trivial, so we assume that it is not. This implies that $vp$ is not the 
smallest positive element in $vK$.

Let us first assume that $(K,v)$ is an rdr field. Then $\frac{vp}{p}\in vK$ by part 1) 
of Lemma~\ref{basprop2}, so $\frac{v_p p}{p}\in v_pK$, showing that $v_p p$ is not the
smallest positive element in $v_p K$. It remains to show that $(K,v_p)$ satisfies 
(DRvr); by part 3) of Lemma~\ref{OK/pOK} it suffices to prove that (\ref{OK/pOKeq})
is surjective in $(K,v_p)$. Take any $a\in \cO_{v_p}\,$. Since $(K,v)$ is an rdr field, 
by part 2) of Lemma~\ref{basprop2} there is
$c\in K$ such that $v(a-c^p)\geq va+vp$, whence $v_p(a-c^p)\geq v_p a+v_p p\geq v_p p$.

Now assume that $(K,v_p)$ is a deeply ramified field, hence an rdr field. As $\ovl{v}$ 
is not trivial, we know already that (DRvp) holds in $(K,v)$,
so it remains to show that (\ref{OK/pOKeq}) holds. Since $(K,v_p)$ is an rdr field, 
for every $a\in\cO_v\subseteq \cO_{v_p}$ there is some $c\in K$ such that 
$v_p(a-c^p)\geq v_p p$, whence $v(a-c^p)>\frac{vp}{p}$. Choosing $d\in K$ with
$vd=\frac{vp}{p}$ and applying Proposition~\ref{dpd'1}, we conclude that 
$(K,v)$ is an rdr field. 

\pars
We turn to the equivalence of assertions a) and d). The implication a)$\Rightarrow$d)
follows from part 1) of Lemma~\ref{basprop2}. Conversely, if $vK$ is roughly 
$p$-divisible, then $vp$ itself is divisible by $p$, so (DRvp) holds. 
\qed

\mn
{\it Proof of Proposition~\ref{wcow}.}\n
Take an arbitrary valued field $(K,v)$ and assume that $v=w\circ\ovl{w}$ with $w$ and 
$\ovl{w}$ nontrivial.
Assume first that $\chara K>0$. Then by part 3) of Theorem~\ref{connprop}, the 
properties ``semitame'', ``deeply ramified'' and ``rdr'' are equivalent, so we have 
to prove the assertions of the proposition only for ``rdr''.

As $w$ is nontrivial and a coarsening of $v$, the topologies generated by $v$ and 
$w$ are equal, and it follows that $(K,v)$ is dense in its perfect hull if and only 
if the same holds for $(K,w)$. By the equivalence of assertions c) and f) in part 3) 
of Theorem~\ref{connprop}, it follows that $(K,v)$ is an rdr field if and only if 
$(K,w)$ is an rdr field. If the latter is the case, then from  Lemma~\ref{basprop1} 
we see that $Kw$ is perfect, and as it is of positive characteristic like $K$, we 
obtain from part 3) of Theorem~\ref{connprop} that $(Kw,\ovl{w})$ is also an rdr field.

Now we assume that $\chara K=0$ and prove the assertions for the property ``rdr''. 
First we discuss the case where $\chara Kw>0$ and  write $w$ in the same way as we 
do for $v$: $w=w_0\circ w_p\circ\ovl{w}$. Then $v_0=w_0$,
$v_p=w_p$, and $\ovl{w}$ is a (possibly trivial) coarsening of $\ovl{v}$. Hence it
follows from Proposition~\ref{rdrrk1} that $(K,v)$ is an rdr field if and only if 
$(K,w)$ is an rdr field. If the latter is the case, then because of $\chara Kw>0$ it
follows as before that $(Kw,\ovl{w})$ is also an rdr field.

Now we discuss the case where $\chara Kw=0$. Then $(K,w)$ is trivially an rdr field, 
and $w_0$ is a coarsening of $v_0$. We write $\ovl{w}=\ovl{w}_0\circ\ovl{w}_p\circ
\ovl{\ovl{w}}$ as for $v$. We obtain that $\ovl{w}_p=v_p\,$, $\ovl{\ovl{w}}=\ovl{v}$, 
and $\ovl{w}_0$ is possibly trivial, with $w\circ\ovl{w}_0=v_0\,$. It follows that
$(Kv_0,v_p)=((Kw)\ovl{w}_0,\ovl{w}_p)$. Using Proposition~\ref{rdrrk1}, we conclude 
that $(K,v)$ is an rdr field if and only if $(Kw,\ovl{w})$ is an rdr field.

It remains to consider the properties ``semitame'' and ``deeply ramified''. We observe
that if $\chara Kv=p>0$, then $vK$ is $p$-divisible if and only if the same is true 
for $wK$ and $\ovl{w}(Kw)$. Likewise, all archimedean components of $vK$ are densely
ordered if and only if the same is true for $wK$ and $\ovl{w}(Kw)$. From what we have
proved before, it thus follows that $(K,v)$ is a semitame (or deeply ramified) field 
if and only if both $(K,w)$ and $(Kw,\ovl{w})$
are semitame (or deeply ramified, respectively).

Further, we recall that in the case of $\chara Kw>0$, $(K,w)$ being an rdr field implies 
that $Kw$ is perfect, and so $\ovl{w}(Kw)$ is $p$-divisible and thus all of its 
archimedean components are densely ordered. This proves that
$(K,v)$ is a semitame (or deeply ramified) field already if $(K,w)$ is.
\qed

%
%
\subsection{Proof of Theorems~\ref{algext} and~\ref{down} for the equal
characteristic case}      
\begin{proposition}                             \label{algextpos}
Take an algebraic extension $(L|K,v)$ of valued fields of positive characteristic. If 
$(K,v)$ is an rdr field, then so is $(L,v)$. If $L|K$ is finite and $(L,v)$
is an rdr field, then so is $(K,v)$. Both statements also hold for ``deeply 
ramified'' and ``semitame'' in place of ``rdr''.
\end{proposition}
\begin{proof}
In view of part 3) of Theorem~\ref{connprop}, our assertions only need to be proved 
for rdr fields. By part 3) of Theorem~\ref{connprop}, a valued field $(K,v)$ of positive 
characteristic is an rdr field if and only if its completion $(\hat K,v)$ is perfect. 

Assume that $(K,v)$ is an rdr field. Then the completion $(\hat L,v)$ 
of $(L,v)$ contains $(\hat K,v)$. Since $\hat K$ is perfect, so is $L.\hat K$.
Since $(\hat L,v)$ is also the completion of $(L.\hat K,v)$, it is perfect too. Hence
$(L,v)$ is an rdr field.

\pars
Now assume that $L|K$ is finite and $(L,v)$ is an rdr field. 
Then $\hat L=L.\hat K$ is perfect. As $L.\hat K|\hat K$ is finite, it follows that 
$\hat K$ is perfect. Thus $(K,v)$ is an rdr field.
\end{proof}

%
%
\subsection{Proof of Theorem~\ref{rdrram} and Corollary~\ref{rdrramcor}}      
\mbox{ }\sn
Our next goal is the proof of Theorem~\ref{rdrram}. First, we prove the upward 
direction. By Proposition~\ref{algextpos}, we only need to prove it in the mixed
characteristic case.
\begin{lemma}                             \label{extrdr}
Assume that $(K,v)$ is a henselian rdr field of mixed characteristic with residue 
characteristic $p>0$, and that $(L|K,v)$ 
is a finite extension. Then the following assertions hold.
\sn
1) If $[L:K]=[Lv:Kv]$, then also $(L,v)$ is an rdr field.
\sn
2) Take a prime $q$ different from $p$. Assume that $L=K(a)$ with $a^q\in K$ and
$va\notin vK$. Then also $(L,v)$ is an rdr field.
\end{lemma}
\begin{proof}
Like $(K,v)$, also $(L,v)$ satisfies (DRvp). Hence by part 3) of 
Lemma~\ref{OK/pOK}, $(L,v)$ will be an rdr field once (\ref{OK/pOKeq}) is surjective.

\pars
In order to prove part 1), we take a finite extension $(L|K,v)$ such that 
$[L:K]=[Lv:Kv]$. Since $Kv$ is perfect by Lemma~\ref{basprop1}, $Lv|Kv$ is separable 
and we write $Lv=Kv(\xi)$ with $\xi\in Lv$. Since also $Lv$ is perfect,
there are $\xi_0,\ldots,\xi_n\in Kv$ with $n=[Lv:Kv]-1$ such that
$\xi=(\xi_n\xi^n+\ldots+\xi_1\xi+\xi_0)^p$. Let $F$ be the extension of $\F_p$ 
generated by the coefficients of the minimal polynomial of $\xi$ over $Kv$ and the
elements $\xi_0,\ldots,\xi_n\,$. As a finitely generated extension of
the perfect field $\F_p\,$, $F$ is separably generated, that is, it admits a
transcendence basis $t_1,\ldots,t_k$ such that $F|\F_p(t_1,\ldots,t_k)$ is 
separable-algebraic. We have that $F\subseteq Kv$, so we may choose
$x_1,\ldots,x_k\in K$ such that $x_iv=t_i\,$. Then $v\Q(x_1,\ldots,x_k)=v\Q=\Z vp$ 
and $\Q(x_1,\ldots,x_k)v=\F_p (t_1,\ldots,t_k)$ (cf.\ \cite[chapter VI, \S10.3,
Theorem~1]{[B]}). Using Hensel's Lemma, we find an extension
$K_0$ of $\Q(x_1,\ldots,x_k)$ within the henselian
field $K$ such that $K_0v=F$ and $vK_0=v\Q(x_1,\ldots,x_k)=\Z vp$.

Using Hensel's Lemma again, we find $a\in L$ such that $av=\xi$, $[K_0(a):K_0]=
[F(\xi):F]$ and $vK_0(a)=vK_0=\Z vp$. By construction, $\xi^{1/p}\in F(\xi)$,
so we can choose $b\in K_0(a)$ such that $bv=\xi^{1/p}$. Then $av=(bv)^p=b^pv$, 
so $v(a-b^p)>0$ and thus $v(a-b^p)\geq vp$.

We observe that since $F$ contains all coefficients of the minimal polynomial of 
$\xi$ over $Kv$,
\[
[Kv(\xi):Kv] \>=\> [F(\xi):F] \>=\> [K_0(a):K_0]\>\geq\> [K(a):K] \>\geq\> 
[Kv(\xi):Kv]\>.
\]
Hence equality holds everywhere; in particular, $K(a)=L$. Also, we obtain that 
$1,a,\ldots,a^n$ is a basis of $K(a)|K$ with the residues $1,av,\ldots,a^nv$ linearly
independent over $Kv$. Hence if we write an arbitrary element of $K(a)$ as
$\sum_{i=0}^n c_i a^i$ with $c_i\in K$, then
\[
v\sum_{i=0}^nc_i a^i \>=\> \min_{0\leq i\leq n} vc_i\>.
\]
Thus, for the sum to have non-negative value, all $c_i$ must have non-negative value.
Since $(K,v)$ is an rdr field, there exists $d_i\in K$ such that $c_i\equiv d_i^p \mod
p\cO_K\,$. Consequently, 
\[
\sum_{i=0}^nc_i a^i \>\equiv\> \sum_{i=0}^n d_i^p (b^p)^i \>\equiv\>
\left(\sum_{i=0}^n d_i b^i\right)^p \mod p\cO_{L}\>,
\]
where the last equivalence holds by part 1) of Lemma~\ref{congp}. This shows that 
(\ref{OK/pOKeq}) is surjective, which proves that $(L,v)$ is an rdr field.

\parm
In order to prove part 2), we take a prime $q$ different from $p$ and a finite 
extension $(L|K,v)$ such that $L=K(a)$ with $a^q\in K$ and $va\notin vK$. We obtain 
that $[K(a):K]=q=(vK(a):vK)$. As $p$ and $q$ are coprime, also
$pva=va^p$ generates $vK(a)$ over $vK$, and $K(a)=K(a^p)$. Therefore, $1,a^p,\ldots,
a^{p(q-1)}$ is a basis of $K(a)|K$ with the values $v1,va^p,\ldots,va^{p(q-1)}$ 
belonging to distinct cosets of $vK$. Hence if we write an arbitrary element $b$ of 
$K(a)$ as $b=\sum_{i=0}^{q-1}c_i a^{pi}$ with $c_i\in K$, then
\[
vb \>=\> v\sum_{i=0}^{q-1}c_i a^{pi} \>=\> \min_{0\leq i<q} vc_i+iva^p\>.
\]
Assume that $vb\geq 0$. Then all $c_ia^{pi}$ must have 
non-negative value. However, for $i>0$ this does not imply that $vc_i\geq 0$; we only 
know that $vc_ia^{pi}>0$ since $iva^p\notin vK$, whence $va^{pi}>-vc_i$.

Suppose that $va$ is not equivalent to an element in $vK$ modulo $(vL)_{vp}\,$. Then 
the same holds for $vc_i +piva$ in place of $va$, for $1\leq i<q$, so that 
$vc_ia^{pi}\notin (vL)_{vp}\,$. In this case, $b$ is equivalent to $c_0$ modulo 
$p\cO_L\,$. Since $(K,v)$ is an rdr field, there is $d_0\in K$ such that $b\equiv
c_0\equiv d_0^p \mod p\cO_L\,$. Hence we may now assume that $va$ is equivalent to an
element $\delta\in vK$ modulo $(vL)_{vp}\,$. We choose $d\in K$ with $vd=\delta$ and
replace $a$ by $a/d$, so from now on we can assume that $va\in (vL)_{vp}\,$.

As $(K,v)$ is an rdr field, $(vK)_{vp}$ is $p$-divisible by part 1) of
Lemma~\ref{basprop2}. It follows that $p(vK)_{vp}$ lies dense in $(vL)_{vp}$ and thus
there is $b_i\in K$ such that $-vc_i\leq pvb_i\leq va^{pi}$, whence $vc_ib_i^p\geq 0$ 
and $vb_i^{-p}a^{pi}\geq 0$. Again since $(K,v)$ is an rdr field, there are $d_i\in K$
such that
$c_ib_i^p\equiv d_i^p \mod p\cO_K\,$. Hence we obtain that
\[
\sum_{i=0}^{q-1}c_i a^{pi} \>=\> \sum_{i=0}^{q-1}(c_ib_i^p) (b_i^{-p}a^{pi}) 
\>\equiv\> \sum_{i=0}^{q-1}d_i^p
b_i^{-p}a^{pi} \>\equiv\> \left(\sum_{i=0}^{q-1}d_i b_ia^i\right)^p \mod p\cO_L\>,
\]
where the last equivalence holds by part 1) of Lemma~\ref{congp}. Again, this shows that 
(\ref{OK/pOKeq}) is surjective, which proves that $(L,v)$ is an rdr field.
\end{proof}

\begin{proposition}                              \label{rdrupmc}
Take a valued field $(K,v)$ of mixed characteristic, fix any extension of $v$ to 
$\tilde{K}$, and let $(K^r,v)$ be the corresponding absolute ramification field of 
$(K,v)$. If $(K,v)$ is an rdr field, then so is $(K^r,v)$.
\end{proposition}
\begin{proof}
In this proof we will freely make use of facts from ramification theory; for details, 
see \cite{[En],[EP],[K7]}.

We let $L$ be a maximal extension of $K$ inside of $K^r$ that is again an rdr field; 
since the union over an ascending chain of rdr fields is again an rdr field, $L$ 
exists by Zorn's Lemma.

First we will show that $(L,v)$ is henselian. The decomposition $v=v_0\circ v_p\circ 
\ovl{v}$ for $v$ on $K$ carries over to $L$ with extensions of the 
respective valuations $v_0\,$, $v_p$ and $\ovl{v}$. We note that 
$v$ is henselian on $L$ if and only if $v_0$, $v_p$ and $\ovl{v}$ are.

Suppose that $v_0$ is not henselian on $L$. As $(K^r,v)$ is henselian, 
so is $(K^r,v_0)$ which therefore contains a
henselization $L^{h(v_0)}$ of $L$ with respect to $v_0$. As henselizations are 
immediate extensions, we know that
$L^{h(v_0)}v_0=Lv_0\,$; by Proposition~\ref{rdrrk1}, $(Lv_0,v_p)$ is an rdr field. 
Using the same proposition again,
we find that also $(L^{h(v_0)},v_0)$
is an rdr field. By the maximality of $L$ we conclude that $L^{h(v_0)}=L$, so $v_0$ is henselian on $L$.

\pars
Next, suppose that $v_p$ is not henselian on $Lv_0\,$. As $(K^r,v)$ is henselian, 
so is $(K^r v_0, v_p)$ which therefore contains a henselization $Lv_0^{h(v_p)}$ of 
$Lv_0$ with respect to $v_p$. We know already that 
$(Lv_0,v_p)$ is an rdr field. As its rank is 1, its henselization lies in its
completion. Hence by part 4) of Lemma~\ref{OK/pOK}, $(Lv_0^{h(v_p)},v_p)$ satisfies
(DRvr). Since (DRvp) holds in $(Lv_0,v_p)$, it also holds in $(Lv_0^{h(v_p)},v_p)$, 
so the latter is an rdr field. The extension $Lv_0^{h(v_p)}|Lv_0$ is 
separable-algebraic, so we can use Hensel's Lemma to find an extension $L'$
of $L$ within $K^r$ such that $L'v_0=Lv_0^{h(v_p)}$. Using Proposition~\ref{rdrrk1}
again, we find that $(L',v)$ is an rdr field. Hence $L'=L$ by the maximality of $L$, 
that is, $Lv_0=Lv_0^{h(v_p)}$, showing that $(Lv_0,v_p)$ is henselian.
\pars
Finally, suppose that $\ovl{v}$ is not henselian on $Lv_0v_p\,$. As $(K^r,v)$ is
henselian, so is $(K^r v_0v_p,\ovl{v})$ which therefore contains a henselization
$Lv_0v_p^{h(\ovl{v})}$ of $Lv_0v_p$ with respect to $\ovl{v}$. Suppose that
$Lv_0v_p^{h(\ovl{v})}|Lv_0v_p$ is nontrivial, so it contains a finite separable
subextension. Using Hensel's Lemma, we lift it to a subextension $F|L$ of $K^r|L$ such
that $[F:L]=[Fv_0v_p:Lv_0v_p]$. By what we have shown already, $(L,v_0v_p)$ is 
henselian, and by definition it is of mixed characteristic. Therefore, we can employ 
part 1) of Lemma~\ref{extrdr} to deduce that $(F,v_0v_p)$ is an rdr field. By
Proposition~\ref{rdrrk1}, also $(F,v)$ is an rdr field. This contradiction to the
maximality of $L$ shows that $Lv_0v_p^{h(\ovl{v})}=Lv_0v_p\,$, that is,
$(Lv_0v_p,\ovl{v})$ is henselian. Altogether, we have now shown that $(L,v)$ is
henselian.

\pars
The residue field of $K^r$ is the separable-algebraic closure of $Kv$. Suppose that 
$Lv$ is not separable-algebraically closed, so it admits a finite separable-algebraic
extension. Using Hensel's Lemma, we lift it to a subextension $F|L$ of $K^r|L$ such 
that $[F:L]=[Fv:Lv]$. Again by part 1) of Lemma~\ref{extrdr}, $(F,v)$ is an rdr field,
contradicting the maximality of $L$. Hence $Lv$ is separable-algebraically closed.

\pars
The value group of $K^r$ is the closure of $vK$ under division by all primes other 
than $p$. Suppose that $vL\ne vK^r$. Then there is some prime $q\ne p$ and $\alpha
\in vK^r\setminus vL$ with $q\alpha\in vL$. Take $a\in\tilde K$ such that $a^q\in L$
with $va^q=q\alpha$. It follows that $(L(a)|L,v)$ is a tame extension, hence $a$ lies 
in the maximal tame extension $L^r$ of $L$. Since $K\subseteq L\subset K^r$, we 
know that $K^r=L^r$, so $a\in K^r$. By part 2) of Lemma~\ref{extrdr}, also $(L(a),v)$ 
is an rdr field, which again contradicts the maximality of $(L,v)$. We conclude
that $vL=vK^r$.

\pars
By what we have shown, $Lv=K^rv$ and $vL=vK^r$. As $K^r=L^r$, we see that $(K^r|L,v)$ 
is a tame extension. Together with the equality of the value groups and residue 
fields, this implies that $L=K^r$. Thus $(K^r,v)$ is an rdr field.
\end{proof}

\begin{proposition}                                \label{rdrv(a-Kp)}
Assume that $(K,v)$ is an rdr field of mixed characteristic, and take $a\in \cO_K\,$. 
\sn
1) Assume that $va=0$. Then for every $c\in\cO_K$ with $0<v(a-c^p)\in (vK)_{vp}$ there 
is $c_1\in\cO_K$ such that
\[
v(a-c_1^p)\>=\> vp\,+\,\frac{1}{p}v(a-c^p)\>.
\]
2) Assume that $va\in (vK)_{vp}$ and that $\dist(a,K^p)< va+\frac{p}{p-1} vp$. Then
\[
va+vp\><\>\dist(a,K^p) \>=\> va+\frac{p}{p-1} vp \,+\, H^-\>,
\]
where $H$ is a convex subgroup of $vK$ not containing $vp$.
\end{proposition}
\begin{proof}
1) Set $\alpha:=v(a-c^p)>0$. Since $(K,v)$ is an rdr field, part 2) of
Lemma~\ref{basprop2} shows that there is $\tilde{c}\in K$ such that:
\begin{equation}                           \label{eq1}
v(a-c^p - \tilde{c}^p)\>\geq\> vp+\alpha\>.
\end{equation}
It follows that $v\tilde{c}^p=\alpha>0$. Since $vc=va=0$,
\begin{equation}                           \label{eq2}
v((c+\tilde{c})^p - c^p-\tilde{c}^p) \>=\> v\sum_{i=1}^{p-1} \binom{p}{i} c^{p-i}
\tilde{c}^i \>=\> vp+v\tilde{c} \>=\> vp+\frac{\alpha}{p}\>.
\end{equation}
From (\ref{eq1}) and (\ref{eq2}), we obtain for $c_1:=c+\tilde{c}$:
\[
v(a-c_1^p) \>=\> \min\left\{vp+\alpha,vp+\frac{\alpha}{p}\right\} \>=\> 
vp+\frac{\alpha}{p}\>.
\]
\sn
2) First we prove the assertion in the case of $va=0$. Since $(K,v)$ is an rdr field,
there is some $c\in K$ such that $v(a-c^p)\geq vp$, so $\dist(a,K^p)\geq vp$.

We will use the following observation. If $(vK)_{vp}\ni v(a-c^p)\geq\frac{p}{p-1} vp 
- \varepsilon>0$ for some $c\in K$ and  positive $\varepsilon\in vK$, then by part 1)
there is $d\in\cO_K$ such that
\[
v(a-d^p)\>=\> vp+\frac{v(a-c^p)}{p}\>\geq\> vp+\frac{vp}{p-1} - \frac{1}{p}\,
\varepsilon \>=\> \frac{p}{p-1} vp - \frac{1}{p}\,\varepsilon\>.
\]

By assumption, $\dist(a,K^p)< \frac{p}{p-1} vp$. Hence the set of all convex 
subgroups $H'$ of $vK$ such that $v(a-K^p)<\frac{p}{p-1}
vp+H'$ is nonempty as it contains $\{0\}$. The set is
closed under arbitrary unions, so it contains a maximal subgroup $H$.
Since $0\in  v(a-K^p)$, we see that $H$ cannot contain $vp$.

Take any positive $\delta\notin H$. Then by the definition of $H$, there is some 
$n\in\N$ such that $v(a-K^p)$ contains a value $\geq \frac{p}{p-1} vp -n\delta$. 
We set $\varepsilon:=\min\left\{\frac{p}{p-1} vp -vp, n\delta
\right\}$ and observe that there is $c\in K$ such that
\[
v(a-c^p)\>\geq\> \frac{p}{p-1} vp - \varepsilon \>\geq\> vp\> >\>0\>.
\]
Note that $v(a-c^p)\in (vK)_{vp}$ since $\dist(a,K^p)< \frac{p}{p-1} vp$. Using our 
above observation, by induction starting from $c_0=c$ we find $c_i\in K$ such that
\[
v(a-c_i^p)\>\geq\> \frac{p}{p-1} vp - \frac{1}{p^i}\varepsilon\>.
\]
We choose some $j\in\N$ such that $\frac{n}{p^j}<1$. Then
\[
\frac{1}{p^j}\varepsilon\leq \frac{n}{p^j}\delta<\delta
\]
and consequently,
\[
v(a-c_j^p)\> > \> \frac{p}{p-1} vp -\delta\>.
\]
This together with the definition of $H$ shows that
\begin{equation}               \label{eq3}
vp\><\>\dist(a,K^p) \>=\> \frac{p}{p-1} vp \,+\, H^- \>.
\end{equation}
If $0\ne va\in (vK)_{vp}$, then since $(K,v)$ is an rdr field, part 1) of
Lemma~\ref{basprop2} shows that there is $b\in K$ such that $vb^p=va$. By what we have
already shown, (\ref{eq3}) holds for $b^{-p}a$ in place of $a$. We have that
\[
v(a-(bc)^p)\>=\>vb^p+v(b^{-p}a-c^p)\>=\>va+v(b^{-p}a-c^p)\>,
\]
whence
\[
\dist(a,K^p)\>=\> va+\dist(b^{-p}a,K^p)\>,
\]
which together with (\ref{eq3}) for $b^{-p}a$ in place of $a$ proves assertion 2) 
of our proposition.
\end{proof}

We pause to note the following consequence of Proposition~\ref{rdrv(a-Kp)} which was
mentioned in the Introduction, but will not be needed any further.
\begin{proposition}                                   \label{dpd'2}
Take a valued field $(K,v)$ of mixed characteristic such that $(vK)_{vp}$ 
is $p$-divisible. Further, take $d'\in \cM_K$ such that
$vp\leq vd'<\frac{p}{p-1}vp+H_0^-$ for the largest convex
subgroup $H_0$ of $vK$ not containing $vp$. Then
the following assertions are equivalent:
\sn
a) the function (\ref{OK/pOKeq}) is surjective, so $(K,v)$ is an rdr field,
\sn
b) the function (\ref{OK/dOKeq}) is surjective modulo $d'\cO_K\,$.
\end{proposition}
\begin{proof}
Since $d'\cO_K\subseteq p\cO_K\,$, and in
view of the equivalence of a) and b) in part 3) of Lemma~\ref{OK/pOK}, the 
implication b)$\Rightarrow$a) is trivial.
\sn
a)$\Rightarrow$b): Assume that assertion a) holds, and take any $a\in\cO_K\,$. 
We may assume that $va\in (vK)_{vp}$ since otherwise, $va>vd'$ and there is nothing to 
show. By our choice of $H_0$ and part 2) of Proposition~\ref{rdrv(a-Kp)}, we now obtain:
\[
\dist(a,K^p) \>\geq\> va+\frac{p}{p-1} vp \,+\, H_0^-
\>\geq\> \frac{p}{p-1} vp \,+\, H_0^-\>.
\]
Therefore, there is $c\in K$ such that $v(a-c^p)\geq vd'$. 
This proves assertion b). 
\end{proof}

\pars
The next two propositions will describe the relation between rdr and independent 
defect fields.
\begin{proposition}                                \label{rdr->id}
Every rdr field containg all $p$-th roots of unity is an independent defect field.
\end{proposition}
\begin{proof}
Assume first that $\chara K>0$. Then by part 3) of Theorem~\ref{connprop}, the perfect
hull of $(K,v)$ lies in its completion. Now part 2) of Proposition~\ref{idf} 
(which has already been proved at the end of Section~\ref{sectASde})
shows that $(K,v)$ is an independent defect field.
\pars
Now assume that $\chara K=0$, and take a Galois defect extension $(L|K,v)$ of 
prime degree. As shown in the beginning of Section~\ref{sectGaldefdegp}, we can assume
that $L=K(\eta)$ with a Kummer generator $\eta$ which is a $1$-unit. 

Suppose that there is some $c\in K$ such that $v(\eta-c)\geq \frac{vp}{p-1}$. Since 
the defect extension $(K(\eta)|K,v)$ is immediate, $v(\eta-c)$ has no maximal element,
and so there will also be some element $c\in K$ such that $v(\eta-c)> \frac{vp}{p-1}$.
Then by Lemma~\ref{apKras}, $\eta$ lies in some henselization $K^h$. But this is
impossible since by Lemma~\ref{dist_hens}, the unibranched extension $(K(\eta)|K,v)$ is
linearly disjoint from $K^h|K$. We conclude that $v(\eta-K)<\frac{vp}{p-1}$. By
Lemma~\ref{eta-c}, this is equivalent to $v(\eta^p-K^p)<\frac{p}{p-1}vp$. 
Therefore, we can apply part 2) of Proposition~\ref{rdrv(a-Kp)} to $a=\eta^p$. We 
find that
\begin{equation}                    \label{distetap}
\dist(\eta^p,K^p) \>=\> \frac{p}{p-1} vp \,+\, H^-\>,
\end{equation}
where $H$ is a convex subgroup of $vK$ not containing $vp$. By part 1) of 
Lemma~\ref{basprop2}, $(vK)_{vp}$ is $p$-divisible. Since $H\subset (vK)_{vp}$, we can 
again apply Lemma~\ref{eta-c} to obtain that (\ref{distetap}) is equivalent to
(\ref{dist=+H-}). 
By part 3) of Proposition~\ref{distAS} it follows that $(K(\eta)|K,v)$ 
has independent defect. This proves that $(K,v)$ is an independent defect field.
\end{proof}

\begin{proposition}                                   \label{chardr2}
Assume that $(vK)_{vp}$ is $p$-divisible and $\erf(K,v)$ is perfect. If $(K,v)$ is an
independent defect field, then it is an rdr field.
\end{proposition}
\begin{proof}
From our assumption that $(vK)_{vp}$ is $p$-divisible it follows that (DRvp) holds. 
It remains to show that $(K,v)$ satisfies (DRvr).

\pars
Assume first that $\chara K>0$. Then by assumption, $vK$ is $p$-divisible and $Kv$ is
perfect, hence the perfect hull of $K$ is an immediate extension of $(K,v)$. Thus by
part 2) of Proposition~\ref{idf}, our
assumption that $(K,v)$ is an independent defect field implies that the perfect hull
of $K$ lies in its completion. This means that 
$(K,v)$ lies dense in its perfect hull. Now part 3) of Theorem~\ref{connprop} shows 
that $(K,v)$ is an rdr field.

\pars
Now assume that $\chara K=0$, and set $w:=v_0\circ v_p$. By Proposition~\ref{wcow} 
it suffices to prove that $(K,w)$ is an rdr field. Assume that $b\in K$ is not a $p$-th
power, and take $\eta\in\tilde{K}$ with $\eta^p=b$. Then from Lemma~\ref{maxnots} with 
$w$ in place of $v$ we infer that either $w(\eta-K)$ has 
a maximal element $\geq\frac{wp}{p}$, or it has no maximal element at all. In the 
first case, part 3) of Lemma~\ref{congp} shows the existence of $c\in K$ such
that $b\equiv c^p \mod p\cO_{(K,w)}\,$. 

Assume that $w(\eta-K)$ has no maximal element. If it is not bounded from above in 
$(wK)_{wp}\,$, then there is some $c\in K$ such that $w(\eta-c)\geq\frac{wp}{p}$, 
which by part 3) of Lemma~\ref{congp} gives us that $b\equiv c^p \mod p\cO_{(K,w)}\,$. 

Now assume that $w(\eta-K)$ is bounded from above in $(wK)_{wp}\,$. Then in particular, 
$w\eta\in (wK)_{wp}\,$. It follows that $(\eta v_0)^p=bv_0\in Kv_0$ and that 
$v_p(\eta v_0 - Kv_0)$ has no maximal element but is bounded from above in 
$v_p(Kv_0)=(wK)_{wp}\,$. Hence by Lemma~\ref{ude}, $(Kv_0(\eta v_0)|Kv_0,v_p)$ is a 
defect extension of degree $p$. From this it follows that also $(K(\eta)|K,v)$ is a 
defect extension of degree $p$. We set $K':=K(\zeta_p)$ where $\zeta_p$ is a 
primitive $p$-th root of unity. Then by (\ref{distKN}) of Proposition~\ref{K(a)K^r(a)}, 
also $(K'(\eta)|K',v)$ is a defect extension of degree $p$, with $\dist(\eta,K')
=\dist(\eta,K)$. By assumption, this defect extension is independent, so 
\[
\dist(\eta,K)\>=\>\dist(\eta,K')\>=\>\frac{vp}{p-1}\,+\,H^-
\]
for some  proper convex subgroup $H$ of $vK$ with $vp\notin H$. Hence there is 
some $c\in K$ such that $v(\eta-c)\geq\frac{vp}{p}$; thus as before, $b\equiv c^p \mod 
p\cO_K\,$. This implies that $b\equiv c^p \mod p\cO_{(K,w)}\,$. 

Altogether, we have shown that (\ref{OK/pOKeq}) is surjective. Hence by part 2) of
Lemma~\ref{OK/pOK}, (DRvr) holds.
\end{proof}

\begin{lemma}                                      \label{Kr=>K}
Fix any extension of $v$ from $K$ to $\tilde{K}$, and let $(K^r,v)$ be the corresponding
absolute ramification field of $(K,v)$. If $(K^r,v)$ is an rdr field, then so is 
$(K,v)$, and if $(K^r,v)$ is a semitame field, then so is $(K,v)$.
\end{lemma}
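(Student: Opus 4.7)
The plan is to verify the three conditions of Theorem~\ref{KEindep} part 1) for $(K,v)$: that $(vK)_{vp}$ is $p$-divisible, that $Kv$ is perfect, and that $(K,v)$ is an independent defect field. Each will be descended from the corresponding property of $(K^r,v)$. The independent-defect condition is the crux and will be supplied directly by Proposition~\ref{idf} part 1); this is the main obstacle, since any attempt at descending (DRvr) through the wild Galois tower $K^r|K^h$ by hand, by tracking approximations of $p$-th roots, would be considerably more delicate.

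For perfectness of $Kv$, I would use that by classical ramification theory $K^rv$ is the separable closure of $Kv$, which is perfect by Lemma~\ref{basprop} 1a) applied to the gdr field $(K^r,v)$. A separably closed algebraic extension of $Kv$ can be perfect only if $Kv$ itself is perfect, since otherwise the purely inseparable extension $(Kv)^{1/p}|Kv$ is not contained in $(Kv)^{\rm sep}$. Hence $Kv$ is perfect.

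For $p$-divisibility of $(vK)_{vp}$, I would exploit that $vK^r/vK$ is torsion of exponent coprime to $p$ (a standard feature of the absolute ramification field) and that $(vK^r)_{vp}\cap vK=(vK)_{vp}$. By Lemma~\ref{basprop} parts 1b) and 2), $(vK^r)_{vp}$ is $p$-divisible in both the equal and mixed characteristic cases. Given $\alpha\in(vK)_{vp}$, pick $\beta\in(vK^r)_{vp}$ with $p\beta=\alpha$, and choose $n$ coprime to $p$ with $n\beta\in vK$; then a B\'ezout relation $1=sp+tn$ yields $\beta=s\alpha+tn\beta\in vK$, whence $\beta\in(vK)_{vp}$. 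Theorem~\ref{KEindep} part 1) now gives that $(K,v)$ is a gdr field.

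For the semitame statement one must additionally show that $vK$ itself is $p$-divisible. The same B\'ezout argument, now applied to an arbitrary $\alpha\in vK$ and using that semitameness of $(K^r,v)$ forces $p$-divisibility of $vK^r$, delivers this. Combined with (DRvr), which is part of the gdr conclusion just proved, it shows that $(K,v)$ is semitame.
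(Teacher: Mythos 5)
Your proposal is correct and follows essentially the same route as the paper: descend perfectness of the residue field and $p$-divisibility of $(vK)_{vp}$ (resp.\ of $vK$) from $K^r$ using that $K^rv|Kv$ is separable and $vK^r/vK$ has no $p$-torsion, descend the independent defect property via Proposition~\ref{idf}~1) (the paper cites the underlying Proposition~\ref{idfKK^r}), and conclude with the characterization of gdr fields (the paper invokes Propositions~\ref{gdr->id} and~\ref{chardr2}, which together constitute Theorem~\ref{KEindep}~1)). Your B\'ezout computation merely makes explicit the descent of $p$-divisibility that the paper asserts in one line.
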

\begin{proof}
Assume that $(K^r,v)$ is an rdr field and hence an independent defect field by
Proposition~\ref{rdr->id}. By Lemmas~\ref{basprop1} and~\ref{basprop2}, $(vK^r)_{vp}$ 
is $p$-divisible and $\erf(K^r,v)$ is perfect. Since $vK^r/vK$ has no 
$p$-torsion, it follows that $(vK)_{vp}$ is $p$-divisible. From Lemma~\ref{Krw|Kwsep}
we infer that the extension $\erf(K^r,v)\,|\,\erf(K,v)$ is separable, so $\erf(K,v)$ is 
perfect. We set $K':=K(\zeta_p)\subseteq K^r$ as before.
From Proposition~\ref{idfKK^r} we conclude that $(K',v)$ is an independent defect
field. Hence by definition, the same holds for $(K,v)$. Proposition~\ref{chardr2} now 
shows that $(K,v)$ is an rdr field.

Now assume that $(K^r,v)$ is a semitame field. Then by  
Theorem~\ref{connprop}, $(K^r,v)$ is an rdr field, hence so is $(K,v)$. Since $vK^r$ is
$p$-divisible and $vK^r/vK$ has no $p$-torsion, also $vK$ is $p$-divisible. Hence by
definition, $(K^r,v)$ is a semitame field.
\end{proof}

\mn
{\it Proof of Theorem~\ref{rdrram}:}\n
It has been proven already in Lemma~\ref{Kr=>K} that if $(K^r,v)$ is an rdr field, 
then so is $(K,v)$, and if $(K^r,v)$ is a semitame field, then so is $(K,v)$. Let us 
now assume that $(K,v)$ is an rdr field. If $\chara K>0$, then $(K,v)$ is an rdr field 
by Proposition~\ref{algextpos}. The case of rdr fields of mixed characteristic has been
settled in Proposition~\ref{rdrupmc}. Being an rdr field,
$(K^r,v)$ is also deeply ramified, as its value group is divisible by every prime
$q\ne\chara Kv$ and thus satisfies (DRvg).

\pars
Assume now that $(K,v)$ is a semitame field. Then by part 1) of Theorem~\ref{connprop}, 
$(K,v)$ is an rdr field. As shown above, it follows that the same is true for 
$(K^r,v)$. Since $vK$ is $p$-divisible, $vK^r$ is $p$-divisible too. Hence 
$(K^r,v)$ is a semitame field.
\qed

\mn
{\it Proof of Corollary~\ref{rdrramcor}:}\n
Part 1) is an immediate consequence of both the upward and the downward direction of 
Theorem~\ref{rdrram}. As $(K^h,v)$ is a subextension of $(K^r,v)$, the assertions of
part 2) for rdr and semitame fields follow immediately from part 1). Also the 
assertion for the case of deeply ramified fields follows since the extension 
$(K^h|K,v)$ is immediate, so $(K^h,v)$ satisfies (DRvg) if and only if $(K,v)$ does.
\qed

%
%
\subsection{Proof of Theorem~\ref{algext}}         
\mbox{ }\sn
We will need some preparations.
\begin{proposition}                      \label{rdrupdefGe}
Assume that $(K,v)$ is an rdr field of mixed characteristic containing 
all $p$-th roots of unity, and that $(L|K,v)$ 
is a Galois defect extension of prime degree. Then also $(L,v)$ is an rdr field. 
\end{proposition}
\begin{proof}
Let $p$ be the residue
characteristic of $(K,v)$. By part 1) of Lemma~\ref{basprop2}, $vK$ contains 
$\frac{1}{p^\infty}\Z vp$. We choose $d\in K$ such that 
\[
vd\>=\> \frac{vp}{p}\>.
\]
By Proposition~\ref{dpd'1} with $L$ in place of $K$, in order to show that $(L,v)$ is 
an rdr field, it suffices to show that the function $\cO_L\ni x\mapsto x^p\in \cO_L$ 
is surjective modulo $d\cO_L\,$.

From Section~\ref{sectGaldefdegp} we know that the extension admits a Kummer generator
which is a $1$-unit $1+a$ with $a\in\cM_L\,$. Proposition~\ref{rdr->id} shows
that $(K,v)$ is an independent defect field. By Proposition~\ref{distAS}, 
$\dist(1+a,K)=\frac{vp}{p-1}+H^-$ for some convex subgroup $H$ of $vK$ 
that does not contain $vp$. Hence for every positive $\alpha<\frac{vp}{p-1}$ in
$\frac{1}{p^\infty}\Z vp$ there is some $b\in K$ such that $v(1+a-b)\geq\alpha$. 
Then $b$ must itself be a $1$-unit, say $b=1+c$. Now $v(1+a -(1+c))=v(a-c)$ and
the $1$-unit 
\[
1+a_c \>:=\>\frac{1+a}{1+c}\>=\> 1+(a-c)-\frac{c(a-c)}{1+c}
\]
satisfies $va_c =v(a-c)\geq\alpha$. Since $b\in K$, $1+a_c$ is also a 
Kummer generator of the extension.

We note that $\frac{1}{p^\infty}\Z vp$ is dense in $\Q vp$. Thus we can
choose $\alpha$ so close to $\frac{vp}{p-1}$ that 
\begin{equation}                          \label{vd}
vd \>=\> \frac{vp}{p}\>\leq\> \alpha-2p\left(\frac{vp}{p-1}-\alpha\right) \><\>\alpha
\><\> \frac{vp}{p-1} \>\leq\>vp\>.
\end{equation}
By what we have shown above, we may from now on assume that $L|K$ admits a Kummer
generator which is a $1$-unit $\eta=1+a$ with $va\geq\alpha$.

\pars
Take an element in $\cO_L$ and write it as $f(\eta)$ where $f(X)=\sum_{i=0}^{p-1} c_i 
X^i$ with $c_i\in K$. The problem is that even though $f(\eta)$ lies in $\cO_L$, the 
coefficients $c_i$ do not necessarily lie in $\cO_K$. (This is in contrast to the case 
of defectless extensions, such as extensions within the absolute ramification field, 
where for a suitably chosen $\eta$, the value of $f(\eta)$ is equal to the minimum of 
the values of the summands $c_i\eta^i$.) Since $v(\eta-K)$ has no maximal element,
Lemma~\ref{Kap} shows that there must be some 
$\gamma\in v(\eta-K)$ such that for all $b\in K$ with $v(\eta-b)\geq\gamma$, the
monomials $\partial_i f(b)(\eta-b)^i$ in 
\[
f(\eta)\>=\>\sum_{i=0}^{p-1} \partial_i f(b)(\eta-b)^i 
\] 
have distinct and thus non-negative values, and that for each $i$, the values 
$v\partial_i f(b)$ are constant, say equal to $\beta_i$. Consequently,
\[
\beta_i + i\gamma\> \geq\> 0 \;\mbox{ for } 0\leq i\leq p-1\>.
\]
As all of this will remain true if we replace $\gamma$ by any larger value in 
$v(\eta-K)$, we can assume that $\gamma>va\geq\alpha>0$. We fix one $b$ with 
$v(\eta-b)\geq\gamma$.
Then also $b$ must be a $1$-unit, and we write $b=1+c$ with $c\in \cM_K\,$. Thus, 
$v(a-c)=v(\eta-b)\geq\gamma$, and it follows that $vc=va\geq\alpha$. 
We set
\[
\eta_c\>:=\>\frac{1+a}{1+c}
\]
and observe that
\begin{equation}
a-c\>=\> \eta_c - 1 + \frac{c(a-c)}{1+c} \>\equiv\> \eta_c - 1 \mod c(a-c)\cO_L\>.
\end{equation}

We choose some $z\in K$ with value $vz=v(a-c)$. Then
\begin{equation}                              \label{eqv1}
f(\eta)\>=\> \sum_{i=0}^{p-1} \partial_i f(b)(a-c)^i \>=\> \sum_{i=0}^{p-1} 
\partial_i f(b)z^i \left(\frac{\eta_c - 1}{z}+\frac{c(a-c)}{z(1+c)}\right)^i\>.
\end{equation}
Now $\partial_i f(b)z^i\in\cO_K$ for all $i$. Further, $v\frac{\eta_c - 1}{z}=0$ and 
$\frac{c(a-c)}{z(1+c)}\in c\cO_L$. Consequently, 
\begin{equation}                              \label{eqv2}
f(\eta) \>\equiv\> \sum_{i=0}^{p-1} \partial_i f(b)z^i 
\left(\frac{\eta_c - 1}{z}\right)^i \mod c\cO_L\>.
\end{equation}
Since $vc=va\geq\alpha>vd$, this congruence also holds modulo $d\cO_L\,$.
Hence in order to show that $f(\eta)$ is a $p$-th power in $\cO_L$ modulo $d\cO_L\,$,
it suffices to show that this is true for the polynomial on the right hand side
of (\ref{eqv2}). With the element $C$ as in Lemma~\ref{C}, this polynomial is equal to 
\begin{equation}                              \label{eqv3}
\sum_{i=0}^{p-1} \partial_i f(b)C^i \left(\frac{\eta_c - 1}{C}\right)^i
\>=\> \sum_{i=0}^{p-1} \partial_i f(b)C^i\vartheta^i\>,
\end{equation}
where 
\[
\vartheta \>:=\> \vartheta_{\eta_c}\>=\> \frac{\eta_c - 1}{C}\>.
\]
Since 
\[
vC\>=\>\frac{vp}{p-1}\> >\>v(\eta-b)\>=\>v(a-c)\>=\>vz\>,
\]
the coefficients $\partial_i f(b)C^i$ still lie in $\cO_K\,$.
We note that $v\vartheta=v(\eta_c - 1)-vC=v(a-c)-vC<0$. Further,
\begin{equation}                              \label{vtheta}
0\> >\> v\vartheta \>\geq\>\gamma-vC\> >\>\alpha-vC\>.
\end{equation}

\pars
From (\ref{trpol}) and (\ref{p-part}) we know that $\vartheta$ satisfies the equation 
\[
\vartheta\>=\> \vartheta^p-\frac{\eta_c^p-1}{C^p} +g(\vartheta)\>,
\]
where
\[
g(\vartheta)=\sum_{i=2}^{p-1} \binom{p}{i} C^{i-p}\vartheta^i\>.
\]
We compute for $2\leq i\leq p-1$:
\begin{eqnarray*}
v\binom{p}{i} C^{i-p}\vartheta^i &=& vp+(i-p)vC+iv\vartheta \>=\>(i-1)vC+iv\vartheta\\
&\geq& vC +pv\vartheta \> > \> \alpha+pv\vartheta \>=\> \alpha
+2pv\vartheta-pv\vartheta\\
&>& \alpha -2p(vC-\alpha)-pv\vartheta\>\geq\> vd-pv\vartheta \>,
\end{eqnarray*}
where the last inequality holds by (\ref{vd}). Hence $g(\vartheta)\in
d\vartheta^{-p}\cO_L$ and
\begin{equation}                              \label{eqv5}
\vartheta\>\equiv\> \vartheta^p-\frac{\eta_c^p-1}{C^p} \mod d\vartheta^{-p}\cO_L\>.
\end{equation}
As $v\vartheta<0$, we have that $v\vartheta>v\vartheta^p$, so that 
\[
v\frac{\eta_c^p-1}{C^p}\>=\>v\vartheta^p\>.
\]
Since $(K,v)$ is an rdr field, using part 2) of Lemma~\ref{basprop2} we can find elements 
$t,t_i\in \cO_K$ such that 
\begin{equation}                                   \label{tt_i1}
t^p\>\equiv\>\frac{\eta_c^p-1}{C^p} \mod pt^p\cO_K\>=\> p\vartheta^p\cO_L
\end{equation}
and for $0\leq i\leq p-1$,
\begin{equation}                                   \label{tt_i2}
t_i^p \>\equiv\> \partial_i f(b)C^i \mod p\cO_K\>\subseteq\> d\cO_L\>.
\end{equation}
We have that 
\[
\vartheta^p-t^p\>\equiv\>(\vartheta-t)^p \mod p\vartheta^p\cO_L
\]
and consequently,  
\begin{equation}                              \label{eqv6}
\vartheta^p-\frac{\eta_c^p-1}{C^p}\>\equiv\>(\vartheta-t)^p \mod p\vartheta^p\cO_L\>.
\end{equation}
From (\ref{vd}) and (\ref{vtheta}) we derive that 
\[
vd\vartheta^{-2p}\>=\> vd-2pv\vartheta\><\>vd+2p(vC-\alpha)\>\leq\>\alpha \><\>vp\>,
\]
so that $p\vartheta^p\cO_L\subseteq d\vartheta^{-p}\cO_L\,$. Hence by 
(\ref{eqv5}) and (\ref{eqv6}),
\[
\vartheta\>\equiv\> (\vartheta-t)^p \mod d\vartheta^{-p}\cO_L\>.
\]
We write $(\vartheta-t)^p=\vartheta+d\vartheta^{-p}s$ with $s\in\cO_L\,$. Then
for $0\leq i\leq p-1$,
\[
(\vartheta-t)^{ip}\>=\> \vartheta^i\,+\,\sum_{j=1}^i \binom{i}{j} \vartheta^{i-j}
(d\vartheta^{-p}s)^j\>.
\]
Since $v\vartheta<0<vd\vartheta^{-p}s$, the summand of least value in the sum on the 
right hand side is the one for $j=1$. This shows that
\[
\vartheta^i \>\equiv\> (\vartheta-t)^{ip} \mod d\vartheta^{-p+i-1}\cO_L\>.
\]
Here, each $d\vartheta^{-p+i-1}\cO_L$ can be replaced by the larger ideal 
$d\cO_L$. Combining this with (\ref{tt_i2}), we obtain:
\begin{equation}                              \label{eqv4}
\sum_{i=0}^{p-1} \partial_i f(b)C^i\vartheta^i \>\equiv\> \sum_{i=0}^{p-1}t_i^p
(\vartheta-t)^{ip} \mod d\cO_L\>.
\end{equation}
We observe that the corresponding summands in the sums on the right hand sides of 
(\ref{eqv1}), (\ref{eqv2}), (\ref{eqv3}) and (\ref{eqv4})
all have the same non-negative value. Consequently, 
\[
\sum_{i=0}^{p-1}t_i^p (\vartheta-t)^p \>\equiv\> 
\left(\sum_{i=0}^{p-1}t_i (\vartheta-t)\right)^p\mod p\cO_L\>.
\]
Together with (\ref{eqv4}), this leads to 
\[
f(\eta)\>\equiv\> \sum_{i=0}^{p-1} \partial_i f(b)C^i \left(\frac{\eta_c - 1}{C}\right)^i
\>\equiv\> \left(\sum_{i=0}^{p-1}t_i (\vartheta-t)\right)^p\mod d\cO_L\>,
\]
which completes our proof.
\end{proof}

\begin{proposition}                                   \label{rdrupdlp}
Assume that $(K,v)$ is an rdr field of mixed characteristic with algebraically closed
residue field. Take a defectless unibranched Galois extension $(L|K,v)$ of degree 
$p=\chara Kv$. Then also $(L,v)$ is an rdr field. 
\end{proposition}
\begin{proof}
Since $(L|K,v)$ is unibranched and defectless, equation (\ref{feuniq}) shows that
$p=[L:K]=(vL:vK)[Lv:Kv]$.
However, as $Kv$ is algebraically closed, $[Lv:Kv]=1$. Hence $(vL:vK)=p$. 
By part 1) of Lemma~\ref{basprop2}, $v_p\circ\ovl{v}(Kv_0)=(vK)_{vp}$ is 
$p$-divisible. It follows that $(v_0L:v_0K)=p$ and therefore, $Lv_0=Kv_0$. 
Applying Proposition~\ref{rdrrk1} to $(K,v)$, we find that $(Kv_0,v_p)=(Lv_0,v_p)$ 
is an rdr field,
and applying the proposition again, we conclude that $(L,v)$ is an rdr field.
\end{proof}

\bn
{\it Proof of Theorem~\ref{algext}.} 
For the case of deeply ramified fields of positive characteristic we have given 
the proof already in Proposition~\ref{algextpos}, so let us assume that $(K,v)$ is an
rdr field of mixed characteristic and $(L|K,v)$ an algebraic extension.
By Theorem~\ref{rdrram}, $(K^r,v)$ is a deeply ramified field. Hence $K^rv$ is perfect
by Lemma~\ref{basprop1}, but as it is also separable-algebraically closed, it must be 
algebraically closed.

We let $L'$ be a maximal 
extension of $K^r$ inside of $L.K^r$ that is again an rdr field; since the union over 
an ascending chain of rdr fields is again an rdr field, $L'$ exists by Zorn's Lemma.
Since $K^r$ contains all $p$-th roots of unity, so does $L'$, and since $K^rv$ is 
algebraically closed, so is $L'v$.

Suppose that $L'\ne L.K^r$. Since $\tilde{K}|K^r$ is a $p$-extension, the same holds 
for $\tilde{K}|L'$. Consequently, $L.K^r|L'$ contains a Galois subextension 
$(L''|L',v)$ of degree $p$. If this is a defect extension, then it follows from 
Proposition~\ref{rdrupdefGe} that $(L'',v)$ is an rdr field. If the extension is 
defectless, then it follows from Proposition~\ref{rdrupdlp} that $(L'',v)$ is an rdr
field. In both cases we have obtained a contradiction to the maximality of $L'$. This
proves that $(L.K^r,v)$ is an rdr field. Since $L.K^r=L^r$ by \cite[(20.15) b)]{[En]}, 
we now obtain from Theorem~\ref{rdrram} that $(L,v)$ is an rdr field.

\pars
It remains to deal with deeply ramified fields and with semitame fields. For them the 
proof follows immediately from what we have already shown, since deeply ramified 
fields are just the rdr fields that satisfy (DRvg), and semitame fields are
just the rdr fields with $p$-divisible value groups. All of these properties are
preserved under algebraic extensions.
\qed

%
%
\subsection{Proof of Theorem~\ref{down}}         
\mbox{ }\sn
The equal characteristic case has already been settled in Proposition~\ref{algextpos}.
Thus we assume now that $(L|K,v)$ is a finite extension of valued fields of mixed
characteristic and that $(L,v)$ is an rdr field. We wish to show that $(K,v)$ is an rdr
field. In order to derive a contradiction, we suppose that this is not the case.

We take an extension of $v$ to $\tilde{K}=\tilde{L}$. This determines the
absolute ramification field $(K^r,v)$ of $(K,v)$. By \cite[(20.15) b)]{[En]},
$(L.K^r,v)$ is the absolute ramification field $(L^r,v)$ of $(L,v)$. By
Theorem~\ref{rdrram},
$(L^r,v)$ is an rdr field. From Lemma~\ref{ramfield} we know that $L.K^r|K^r$ is a 
finite tower of Galois extensions of degree $p$. By our assumption and  
Theorem~\ref{rdrram}, $(K^r,v)$ is not an rdr
field. Then there is a maximal field $(N,v)$ in the tower that is not an rdr field, 
and a Galois extension $(N',v)$ of $(N,v)$ of degree $p$ that is an rdr field. 

By part 1) of Lemma~\ref{basprop2}, $vN'$ contains $\frac{1}{p^\infty}\Z vp$. Since
$(N'|N,v)$ is a finite extension, also $vN$ contains $\frac{1}{p^\infty}\Z vp$. 
By part 1) of Lemma~\ref{basprop1}, $\erf(N',v)$ is perfect. As  $\erf(N',v)| 
\erf(N,v)$ is a finite extension, also $\erf(N,v)=Nv_0 v_p$ is perfect. Hence the same 
holds for $Nv$.

Since $(N,v)$ is not an rdr field, Proposition~\ref{dpd'1} shows that for every 
$d\in N$ with $vd \in \frac{1}{p^\infty}\Z vp$ and $0<vd\leq vp$ there must be some 
$b_d \in\cO_N^\times$ such that there is no $c\in N$ with $b_d-c^p\in d\cO_N\,$.
We choose $\eta_d\in \tilde{N}$ such that $\eta_d^p=b_d\,$. Then there is no $c\in N$ 
such that $v(\eta_d-c)\geq \frac{vd}{p}$ since this would imply $v(b_d-c^p)=
v(\eta_d^p-c^p)\geq vd$ as $\eta_d^p-c^p\equiv (\eta_d-c)^p \mod p\cO_N\,$. 
Lemma~\ref{maxnots} 
shows that $v(\eta_d-N)$ has no maximal element. Hence by Lemma~\ref{imm_deg_p},
$(N(\eta_d)|N,v)$ is a Galois defect extension, and by Proposition~\ref{distAS}, it 
has dependent defect. 

\pars
We distinguish two cases. First, let us assume that $(N'|N,v)$ is not a defect
extension. Then by Lemma~\ref{ivd}, $(N'(\eta_d)|N',v)$ is a Galois defect extension 
with $\dist(\eta_d,N')=\dist(\eta_d,N)$, 
which shows that also this extension has dependent defect. Therefore, $(N',v)$ is 
not an independent defect field and thus by 
Proposition~\ref{rdr->id}, it is not an rdr field. This contradicts our assumption.

\pars
Now let us assume that $(N'|N,v)$ is a defect extension. 
Since $K^r$ contains all $p$-th roots of unity, the same holds for $N$. Therefore, 
the extension $N'|N$ admits a Kummer generator $\eta$, and we can assume 
that it is a $1$-unit. Since $Nv_0 v_p$ is perfect, it follows that there is 
some $c\in N$ such that $v_0\circ v_p(\eta-c)>0$, and thus 
we can choose some $d\in N$ as above such that $\frac{vd}{p}\in v(\eta-N)$. It 
follows that 
\begin{equation}                          \label{smaller}
v(\eta_d-N)\subsetneq v(\eta-N)\>.
\end{equation}
This means that $\dist(\eta_d,N)<\dist(\eta,N)$. Note that $v(\sigma\eta_d-\eta_d)=
v(\sigma\eta-\eta)$ as both $\eta_d$ and $\eta$ are Kummer generators of value $0$ of the
extensions $N(\eta_d)|N$ and $N'|N$, respectively.

If $v(\eta_d-N')=v(\eta_d-N)$, 
then again by Lemma~\ref{ivd}, $(N'(\eta_d)|N',v)$ is a Galois defect extension with 
$\dist(\eta_d,N')=\dist(\eta_d,N)$, yielding a contradiction as before.

\pars
Suppose that $\eta_d\in N'$. Then inequality
(\ref{smaller}) leads to 
\begin{equation}                         \label{neq}
-v(\eta_d-N)+v(\sigma\eta_d-\eta_d) \>\neq\> -v(\eta-N)+v(\sigma\eta-\eta)\>,
\end{equation}
which in view of equation~(\ref{ram_gp}) together with Theorem~\ref{dist_galois_p}
is a contradiction. Hence we can assume that $\eta_d\notin N'$.

\pars
Now our proof will be complete once we show that $v(\eta_d-N')\ne v(\eta_d-N)$ is
impossible. In order to derive a contradiction, suppose that the two sets are not 
equal. Then there is some $\tilde\eta\in N'$ such that $v(\eta_d-\tilde\eta)\notin
v(\eta_d-N)$. Since $v(\eta_d-N)$ is an initial segment of $vN'=vN$, it follows 
that $v(\eta_d-\tilde\eta)>v(\eta_d-N)$. By part 1) of Lemma~\ref{relation},
\[
v(\eta_d-N)\>=\>v(\tilde\eta-N)
\] 
holds for all $\tilde\eta\in N'$ with $v(\eta_d-
\tilde\eta)>v(\eta_d-N)$. As $\tilde\eta\in N'\setminus N$ and $[N':N]=p$, also 
$\tilde\eta$ is a generator of $N'|N$. 

For $\sigma\in\Gal(\tilde K|K)$ with 
$\sigma\tilde\eta\ne\tilde\eta$, we compute:
\begin{equation}                    \label{sigma}
v(\sigma\tilde\eta-\tilde\eta)\>\geq\>\min\{v(\sigma\tilde\eta-\sigma\eta_d),
v(\sigma\eta_d-\eta_d),v(\eta_d-\tilde\eta)\}\>.
\end{equation}
As an algebraic extension of $(K^r,v)$, also $(N,v)$ 
is henselian. Hence we have that $v(\sigma\tilde\eta-\sigma\eta_d)=v
\sigma(\eta_d-\tilde\eta)=v(\eta_d-\tilde\eta)$. Suppose that
\[
v(\eta_d-\tilde\eta)\>\geq\>v(\sigma\eta_d-\eta_d)\>.
\]
As $(vN')_{vp}$ is $p$-divisible and $N'v$ is perfect, $v(\eta_d-N')$ does not have  
a maximum inside of $(vN')_{vp}\,$, so we may assume that $v(\eta_d-\tilde\eta)>
v(\sigma\eta_d-\eta_d)$. Thus in all cases, we may assume that $v(\eta_d-\tilde\eta)
\ne v(\sigma\eta_d-\eta_d)$. Hence by (\ref{sigma}),
\begin{equation}                              \label{vse-e}
v(\sigma\tilde\eta-\tilde\eta)\>=\>\min\{v(\sigma\eta_d-\eta_d),v(\eta_d-\tilde\eta)\}\>.
\end{equation}
If $v(\sigma\tilde\eta-\tilde\eta)=v(\sigma\eta_d-\eta_d)$, then 
$v(\sigma\tilde\eta-\tilde\eta)=v(\sigma\eta-\eta)$ and we obtain a contradiction exactly 
as in (\ref{neq}) with $\eta_d$ replaced by $\tilde\eta$. Hence we now assume that
\[
v(\sigma\tilde\eta-\tilde\eta)\>=\>v(\eta_d-\tilde\eta)
\><\>v(\sigma\eta_d-\eta_d)\>. 
\]
Again because $v(\eta_d-N')$ does not have a maximum inside of $(vN')_{vp}\,$, we can
choose $\tilde\eta_1\in N'$ such that 
\[
v(\eta_d-\tilde\eta_1)\> >\>v(\eta_d-\tilde\eta)\> >\>v(\eta_d-N)\>. 
\]
Like $\tilde\eta$, also $\tilde\eta_1$ is a generator of $N'|N$. With the same 
computations as before, we arrive
at (\ref{vse-e}) with $\tilde\eta$ replaced by $\tilde\eta_1\,$. We must have that 
$v(\eta_d-\tilde\eta_1)<v(\sigma\eta_d-\eta_d)$ since otherwise, we would obtain a
contradiction as before. Therefore,
\[
v(\sigma\tilde\eta_1-\tilde\eta_1)\>=\>v(\eta_d-\tilde\eta_1)
\]
and
\[
v(\tilde\eta_1-N)\>=\>v(\eta_d-N)\>=\>v(\tilde\eta-N)\>.
\]
Combining everything, we find:
\begin{eqnarray*}
-v(\tilde\eta_1-N)+v(\sigma\tilde\eta_1-\tilde\eta_1) &=&
-v(\eta_d-N)+v(\eta_d-\tilde\eta_1)\\
&\ne& -v(\eta_d-N)+v(\eta_d-\tilde\eta) \\
&=& -v(\tilde\eta-N)+v(\sigma\tilde\eta-\tilde\eta)\>,
\end{eqnarray*}
which again by equation~(\ref{ram_gp}) together with Theorem~\ref{dist_galois_p}
is a contradiction. 
\qed

%
%
\subsection{Proof of Theorem~\ref{KEindep} and of Proposition~\ref{idf}}         
\mbox{ }\sn
{\it Proof of Theorem~\ref{KEindep}:}
As before we set $K'=K(\zeta_p)$.
Then for any extension of $v$ to $\tilde{K}$, $(K(\zeta_p),v)$ is contained in 
the corresponding absolute ramification field. 
\sn
1): Assume that $(K,v)$ is an rdr field. The assertions on $(vK)_{vp}$ and 
$\erf(K,v)$ have been proven in Lemmas~\ref{basprop1} and~\ref{basprop2}.
By part 1) of Corollary~\ref{rdrramcor}, also 
$(K',v)$ is an rdr field. It follows from Proposition~\ref{rdr->id} that 
$(K',v)$ is an independent defect field. Thus by definition, $(K,v)$ is an 
independent defect field. The converse is the content of 
Proposition~\ref{chardr2}.

\mn
2): We note that every unibranched Galois extension of prime degree different from the
residue characteristic is automatically tame.

First, we assume that $(K,v)$ is a semitame field. Then by part 1) of 
Corollary~\ref{rdrramcor}, also $(K',v)$ is a semitame field, so $vK'$ is $p$-divisible. 
By Lemma~\ref{basprop1}, $K'v$ is perfect. Therefore, equation 
(\ref{feuniq}), with $K'$ in place of $K$, shows that every unibranched Galois 
extension $(L|K',v)$ of degree $p$ either has defect $p$, or satisfies
$[Lv:Kv]=p$ with $Lv|K'v$ a separable extension. In the latter case, the extension 
has no defect and is tame. Otherwise, it is a defect extension of
degree $p$. Then, as $(K',v)$ is an rdr field by Theorem~\ref{connprop}, part 1) of 
our theorem shows that it must have independent defect.

\pars
For the converse, we first show that our assumptions yield that $vK'$, and hence also 
$(vK')_{vp}$, is $p$-divisible, and that $\erf(K',v)$ is perfect. Indeed, if $\alpha\in
vK'$ is not divisible by $p$ and we take $a\in K'$ with $va=\alpha$, then taking a 
$p$-th root of $a$ induces a Galois extension that is neither tame nor immediate,
contradicting the hypothesis. The same holds if $a\in K'$ is such that $va=0$ and $av$ 
does not have a $p$-th root in $K'v$, hence $K'v$ is perfect.

Suppose that $\erf (K',v)$ is not perfect. Pick $a\in K'$ such that $av_0\circ v_p$ 
has no $p$-th root and choose some $b\in\tilde K$ such that $b^p=a$. Since $vK'$ is 
$p$-divisible, the same holds for $\ovl{v}(K'v_0\circ v_p)$. In addition, 
$(K'v_0\circ v_p)\ovl{v}=K'v$ is 
perfect. It follows that the extension $(K'v_0\circ v_p(bv_0\circ v_p)|K'v_0\circ v_p,
\ovl{v})$ is immediate of degree $p=[K'(b):K']$, which implies that also $(K'(b)|K',v)$
is immediate. Further, the
former extension is unibranched as it is purely inseparable. Since also the extension 
$(K'(b)|K,v_0\circ v_p)$ is unibranched as its inertia degree is $p$, also
$(K'(b)|K',v)$ is unibranched. By assumption, its defect must be independent
since defect extensions of degree $p$ are not tame. But then there must be $c\in K'$ such 
that $v(b-c)>\frac{vp}{p}$, whence $bv_0\circ v_p\in K'v_0\circ v_p$, contradiction.

Our assumption yields 
that every Galois defect extension of $(K',v)$ of degree $p$ is independent. 
Hence we obtain from part 1) that (DRvr) holds, so $(K',v)$ is a 
semitame field. By part 1) of Corollary~\ref{rdrramcor}, also $(K,v)$ is a semitame 
field.
\qed

\mn
{\it Proof of Proposition~\ref{idf}.}
Part 1) follows from Proposition~\ref{idfKK^r}. Part 2) has already been proved at the  
end of Section~\ref{sectASde}.
\qed

%
%
\subsection{Proof of Proposition~\ref{ax}}        
\mbox{ }\sn
It is well known that first order properties of the value group $vK$ of a valued field 
$(K,v)$ can be encoded in $(K,v)$ in the language of valued fields. The axiomatization
for (DRvp) and (DRst) is straightforward. Further, (DRvg) holds in an ordered abelian
group $(G,<)$ if and only if for each positive $\alpha\in G$ there is $\beta\in
G$ such that $2\beta\leq\alpha\leq 3\beta$.

If $(K,v)$ is of mixed characteristic, then (DRvr) is equivalent to the surjectivity 
of (\ref{OK/pOKeq}), and this in turn holds if and only if for each $a\in K$ with 
$va\geq 0$ there is $b\in K$ such that $v(a-b^p)\geq vp$. Hence
the classes of semitame, deeply ramified and rdr fields of mixed characteristic 
are first order axiomatizable.

If $(K,v)$ is of equal positive characteristic, then part 3) of Theorem~\ref{connprop} 
shows that semitame, deeply ramified and rdr fields form the same class. This class 
can be axiomatized by saying that $(K^p,v)$ is dense in $(K,v)$, or in other
words, for every $\alpha\in vK$ and every $a\in K$ there is $b\in K$ such that 
$v(a-b^p)\geq\alpha$.

In the case of equal characteristic 0, (DRvp), (DRvr) and (DRst) are trivial and all
valued fields are semitame and rdr fields, while the class of deeply ramified fields
consists of those which satisfy (DRvg).
\qed

\end{document}